\def\serieslogo@{} \def\@setcopyright{} \makeatother
\renewcommand*\env@matrix[1][c]{\hskip -\arraycolsep
  \let\@ifnextchar\new@ifnextchar
  \array{*\c@MaxMatrixCols #1}}
\numberwithin{equation}{section}
\newtheorem{thm}{Theorem}[section]
\newtheorem{cor}[thm]{Corollary}
\newtheorem{lem}[thm]{Lemma}
\newtheorem{prop}[thm]{Proposition}
\theoremstyle{definition}
\newtheorem{defn}[thm]{Definition}
\newtheorem{rem}[thm]{Remark}
\newtheorem{exam}[thm]{Example}
\newtheorem*{ackn}{Acknowledgment}
\newcommand{\lxr}{\longrightarrow}
\newcommand{\A}{\mathscr A}
\newcommand{\B}{\mathscr B}
\newcommand{\C}{\mathscr C}
\newcommand{\F}{\mathcal F}
\newcommand{\G}{\mathcal G}
\newcommand{\I}{\mathcal I}
\newcommand{\M}{\mathcal M}
\newcommand{\U}{\mathcal U}
\newcommand{\V}{\mathcal V}
\newcommand{\W}{\mathcal W}
\newcommand{\X}{\mathcal X}
\newcommand{\Y}{\mathcal Y}
\newcommand{\Z}{\mathcal Z}
\newcommand{\mL}{\mathsf{L}}
\newcommand{\mf}{\mathsf{F}}
\newcommand{\mg}{\mathsf{G}}
\newcommand{\mt}{\mathsf{T}}
\newcommand{\mh}{\mathsf{H}}
\newcommand{\mU}{\mathsf{U}}
\newcommand{\mz}{\mathsf{Z}}
\newcommand{\mc}{\mathsf{C}}
 \DeclareMathOperator{\inc}{\mathsf{inc}}
\DeclareMathOperator*{\Ker}{\mathsf{Ker}}
 \DeclareMathOperator*{\Image}{\mathsf{Im}}
\DeclareMathOperator*{\Coker}{\mathsf{Coker}}
 \DeclareMathOperator{\pd}{\mathsf{pd}}
\DeclareMathOperator*{\id}{\mathsf{id}}
  \DeclareMathOperator*{\gld}{\mathsf{gl.dim}}
  \DeclareMathOperator*{\maxx}{\mathsf{max}}
\DeclareMathOperator*{\Mod}{\mathsf{Mod}-\!}
\DeclareMathOperator*{\End}{\mathsf{End}}
 \DeclareMathOperator*{\smod}{\mathsf{mod}-\!}
\DeclareMathOperator*{\inj}{\mathsf{inj}}
\DeclareMathOperator*{\proj}{\mathsf{proj}}
\DeclareMathOperator*{\Inj}{\mathsf{Inj}}
\DeclareMathOperator*{\Proj}{\mathsf{Proj}}
 \newcommand{\Gproj}{\operatorname{\mathsf{Gproj}}\nolimits}
\DeclareMathOperator{\Hom}{\mathsf{Hom}}
\DeclareMathOperator*{\Ext}{\mathsf{Ext}}
\DeclareMathOperator*{\Tor}{\mathsf{Tor}}
  \DeclareMathOperator*{\op}{\mathsf{op}}
     \DeclareMathOperator*{\rep}{\mathsf{rep.dim}}
   \DeclareMathOperator*{\du}{\mathsf{D}}
\DeclareMathOperator*{\llength}{\mathsf{LL}}
\newcommand{\iden}{\operatorname{Id}\nolimits}
\newsavebox{\proofbox}
\savebox{\proofbox}{\begin{picture}(7,7)%
  \put(0,0){\framebox(7,7){}}\end{picture}}
\begin{document}


\title[]{On Artin Algebras Arising from Morita Contexts}

\author[Ch. Psaroudakis]{Edward L. Green and Chrysostomos Psaroudakis}
\address{Department of Mathematics, Virginia Tech, Blacksburg, VA 24061, USA}
\email{green@math.vt.edu}
\address{Department of Mathematics, University of Ioannina, 45110
Ioannina, Greece} 
\email{hpsaroud@cc.uoi.gr}

\date{\today}

\thanks{{\bf -} The second named author has been co-financed by the European Union (European Social Fund - ESF) and Greek national funds through the Operational Program "Education and Lifelong Learning" of the National Strategic Reference Framework (NSRF) - Research Funding Program: Heracleitus II. Investing in knowledge society through the European Social Fund.}

\keywords{Morita rings, Functorially finite subcategories, Global dimension, Gorenstein Artin algebras, Gorenstein-projective modules.}

\subjclass[2010]{16E10;16E65;16G;16G50;16S50}

\begin{abstract}
We study Morita rings $\Lambda_{(\phi,\psi)}=\bigl(\begin{smallmatrix}
A & _AN_B \\
_BM_A & B
\end{smallmatrix}\bigr)$ in the context of Artin algebras from various perspectives. First we study covariantly finite, contravariantly finite, and functorially finite subcategories of the module category of a Morita ring when the bimodule homomorphisms $\phi$ and $\psi$ are zero. Further we give bounds for the global dimension of a Morita ring $\Lambda_{(0,0)}$, as an Artin algebra, in terms of the global dimensions of $A$ and $B$ in the case when both $\phi$ and $\psi$ are zero. We illustrate our bounds with some examples. Finally we investigate when a Morita ring is a Gorenstein Artin algebra and then we determine all the Gorenstein-projective modules over the Morita ring $\Lambda_{\phi,\psi}$ in case $A=N=M=B$ and $A$ an Artin algebra.
\end{abstract}

\maketitle

\setcounter{tocdepth}{1} \tableofcontents

\section{Introduction}
Morita contexts, also known as pre-equivalence data,  have been introduced by Bass in \cite{Bass_Morita}, see also \cite{Cohn},  in his exposition of the Morita Theorems on equivalences of module categories. Let $A$ and $B$ be unital associative rings. Recall that a {\em Morita context} over $A$, $B$, is a $6$-uple $\M = (A,N,M,B, \phi, \psi)$, where  ${_{B}}M_{A}$ is a $B$-$A$-bimodule,  ${_{A}}N_{B}$ is an $A$-$B$-bimodule, and $\phi \colon   M\otimes_{A}N \lxr B$ is a $B$-$B$-bimodule homomorphism, and $\psi \colon N\otimes_{B}M \lxr A$ is an $A$-$A$-bimodule homomorphism, satisfying the following associativity conditions,  $\forall m,m'\in M$, $\forall n,n'\in N\colon$ 
\[\phi(m\otimes n)m'=m\psi(n\otimes m') \ \ \ \ \ \text{and} \ \ \ \ \ n\phi(m\otimes n')=\psi(n\otimes m)n'
\]
Associated to any Morita context $\M$ as above, there is an associative ring, the {\em Morita ring} of $\M$, which incorporates all the information involved in the $6$-uple $\M$,  defined to be the formal $2\times 2$ matrix ring 
\[
\Lambda_{(\phi,\psi)}(\M) =
         \begin{pmatrix}
           A & _AN_B \\
           _BM_A & B \\
         \end{pmatrix} 
\]
where $\Lambda_{(\phi,\psi)}(\M) = A \oplus N \oplus M \oplus B$ as an abelian group, and the  formal matrix multiplication is given by
\[\begin{pmatrix}
           a & n \\
           m & b \\
         \end{pmatrix} \cdot \begin{pmatrix}
           a' & n' \\
           m' & b' \\
         \end{pmatrix} = \begin{pmatrix}
           a a' + \psi(n\otimes m')  & a n'+ n b' \\
           m a' + b m' & b b' + \phi(m\otimes n')  \\
         \end{pmatrix}
\]
The Morita ring of a Morita context, not to be confused with the notion of a (right or left) Morita ring appearing in Morita duality,  has been studied explicitly by various authors in ring, module, or representation, theoretic framework; in this connection we refer to the papers by Amitsur \cite{Amitsur}, Muller \cite{Muller}, Green \cite{Green}, Cohen \cite{Cohen}, Loustaunau \cite{Loustaunau}, and Buchweitz \cite{Buchweitz}, among others. We refer also to the classical textbooks \cite{Lam}, \cite{McConnel}, \cite{Rowen} for the terminology of Morita rings.

It should be noted that the  Morita rings  form an omnipresent class of rings, providing sources of many important examples and the proper conceptual framework for the study of many problems in several different contexts in ring theory. We describe briefly some important examples and situations where Morita rings are involved. 

Let $A$ be a ring and  $M_{A}$ be a right $A$-module. If $B = \End_{A}(M)$ is the endomorphism ring of $A$, then  viewing  $M$  as a $B$-$A$-bimodule and setting $N = \Hom_{A}(M,A)$, it is easy to see that there exist naturally induced bimodule homomorphisms $\phi$ and $\psi$ and a Morita context $\M = (A,M,N,B, \phi, \psi)$. Hence any pair $(A,M_{A})$, where $A$ is a ring and $M_{A}$ is a right $A$-module induces a Morita context. 

An important special case is when $M = eA$, where $e^{2} = e$ is an idempotent element of $A$. Clearly then $N = \Hom_{A}(M,A) = Ae$ and $B = eAe$, and the Morita ring $\Lambda_{(\phi,\psi)}(\M)$  takes the form
\[
\Lambda_{(\phi,\psi)}(\M) =
         \begin{pmatrix}
           A & Ae \\
           eA & eAe \\
         \end{pmatrix} 
\]  
On the other hand if $e^{2} = e \in A$ is an idempotent element of $A$ and $f = 1_{A}-e$,  then the Pierce decomposition of $A$ with respect to the idempotents $e, f$ induces a Morita context $\M(e,f) = (eAe, eA , fA, fAf, \alpha, \beta)$,  and the ring $A$ is isomorphic to  
\[
A \, \ \cong \, \Lambda_{(\phi,\psi)}(\M(e,f)) =
         \begin{pmatrix}
           eAe & eAf
            \\
           fAe & fAf \\
         \end{pmatrix} 
\]  
Note that since any Morita ring $\Lambda_{\phi,\psi}(\M)$ contains the idempotents 
$e = \bigl( \begin{smallmatrix}
  1_{A} &0\\  0 & 0
\end{smallmatrix} \bigr)$ and $f = 1_{\Lambda}-e = \bigl( \begin{smallmatrix}
  0 &0\\  0 & 1_{B}
\end{smallmatrix} \bigr)$, it is not difficult to see that there is a ring isomorphism $\Lambda_{\phi,\psi}(\M) \cong \Lambda_{\phi,\psi}(\M(e,f))$. It follows that any Morita ring is isomorphic to the Morita ring arising from the Pierce decomposition of a ring $A$ with respect to two orthogonal idempotents whose sum is the identity of $A$.  We mention that, as a consequence, any upper or lower triangular matrix ring is a Morita ring.  

As another important example, in a more general context, let $\C$ be an additive category and $X, Y$ be arbitrary objects of $\C$. We view   $M := \Hom_{\C}(X,Y)$ as an $A$-$B$-bimodule and $N := \Hom_{\C}(Y,X)$ as a $B$-$A$-bimodule in a natural way, where $A = \End_{\C}(X)$ and $B = \End_{\C}(Y)$. It is easy to see that there is a Morita context $\M = (A, M, N, B, \phi, \psi)$ and an isomorphism of rings 
\[
{\End}_{\C}(X\oplus Y) \ \cong \ \Lambda_{\phi,\psi}(\M)
\]     
i.e. Morita rings appear as endomorphism rings of a direct sum of objects in any additive category. This is  the universal example of a Morita ring since it is not difficult to see that any Morita ring arises in this way. On the other hand the above construction gives the well-known bijective correspondence $\mu \colon \C \lxr \M$ between pre-additive categories $\C$ with two objects $X,Y$ and Morita Contexts $\M = (A,N,M,B, \phi, \psi)$. Under this correspondence $\mu(X) = \End_{\C}(X) = A$, $\mu(Y) = \End_{\C}(Y) = B$, $M = \Hom_{\C}(X,Y)$, $N = \Hom_{\C}(Y,X)$ and the maps $\phi$ and $\psi$ are given by the composition of maps in $\C$. As a consequence the study of Morita rings subsumes the study of pre-additive categories with two objects.      
   
\, 

The examples and situations  mentioned above and the important role they play in various different contexts provide a strong motivation for studying Morita rings in a general context using homological and representation-theoretic tools.   Our main aim in this paper is to study Morita rings, mainly  in the context of Artin algebras,  concentrating mainly at representation-theoretic and homological aspects. 

The organization and the main results of the paper are as follows. 

In section $2$ we collect preliminary notions and results on Morita rings that will be useful throughout the paper and we fix notation. In particular we describe the module category over a Morita ring and also we analyze the connections with recollement situations between the involved module categories. In section 3 we describe the projective, injective and simple modules in case the Morita ring is an  Artin algebra. Using this description we characterize when the Morita ring  is selfinjective and then, as an immediate consequence of this we give an upper bound for the representation dimension of the Morita ring $\Lambda_{0,0}(\M)$ arising from the Morita context $\M$ where $A = B = M = N = \Lambda\colon $selfinjective and $\phi = 0 = \psi$.

In sections 4 and 5  we study finiteness conditions on subcategories of the module category of a Morita ring  
$\Lambda_{(\phi,\psi)}(\M)$ arising from a Morita context $\M = (A, N, M, B, \phi, \psi)$, and also we investigate the global  dimension of $\Lambda_{(\phi,\psi)}(\M)$, in the special case when the bimodule homomorphisms $\phi$ and $\psi$ are zero. One advantage for working in this setting is that the module categories $\Mod{A}$, $\Mod{B}$, $\Mod{\bigl(\begin{smallmatrix}
A & 0 \\
_BM_A & B
\end{smallmatrix}\bigr)}$, $\Mod{\bigl(\begin{smallmatrix}
A & _AN_B \\
0 & B
\end{smallmatrix}\bigr)}$ are fully embedded into the module category $\Mod{\Lambda_{(0,0)}(\M)}$ as functorially finite subcategories. More generally we show in section 4 that if $\U$ is a functorially finite subcategory of $\Mod{A}$ and $\V$ is a functorially finite subcategory of $\Mod{B}$, then under some additional conditions the subcategories $\U$ and $\V$ induce a functorially finite subcategory of $\Mod{\Lambda_{(0,0)}(\M)}$, thus generalizing some well known results  of the literature, see \cite{triangular}, in the setting of triangular matrix rings. In section $5$, after introducing the notion of an {\em $A$-tight} (resp. {\em $B$-tight}) {\em projective $\Lambda_{(0,0)}$-resolution}, we prove our first main result of the section, namely if the bimodule $M$ has a $B$-tight projective $\Lambda_{(0,0)}$-resolution and the bimodule $N$ has an $A$-tight projective $\Lambda_{(0,0)}$-resolution, then the global dimension of $\Lambda_{(0,0)}$ is bounded by the sum of the global dimensions of $A$ and $B$ plus one. Further we deal with  the case where either $M$ or $N$ does not have a tight projective $\Lambda_{(0,0)}$-resolution and we present some formulas for the global dimension of $\Lambda_{(0,0)}$. We provide examples which shows that the inequalities of our bounds are sharp and that the inequalities can be proper. 

The final section 6 is devoted to investigate when a Morita ring $\Lambda_{(\phi,\psi)}(\M)$, regarded as an Artin algebra, is Gorenstein and discuss applications of this result. More precisely, under the assumption that the adjoint pair of functors $(M\otimes_A-,\Hom_B(M,-))$ induces a quasi-inverse equivalence between the category of $A$-modules of finite projective dimension and the category of $B$-modules of finite injective dimension, and the adjoint pair of functors $(N\otimes_B-,\Hom_A(N,-))$ induces a quasi-inverse equivalence between the category of $B$-modules of finite projective dimension and the category of $A$-modules of finite injective dimension, 
we show that the algebra $\Lambda_{(\phi,\psi)}(\M)$ is Gorenstein. For example if $\Lambda$ is a Gorenstein Artin algebra then the matrix algebra 
$\Delta_{(\phi,\phi)}(\M)=\bigl(\begin{smallmatrix}
\Lambda & \Lambda \\
\Lambda & \Lambda
\end{smallmatrix}\bigr)$, arising from the Morita context $\M = (\Lambda, \Lambda, \Lambda, \Lambda, \phi, \phi)$, is Gorenstein. As a consequence we determine the Gorenstein-projective modules over the Artin algebra $\Delta_{(\phi,\phi)}(\M)$, under the assumption that $\Lambda$ is a Gorenstein Artin algebra.  

\smallskip

{\bf Conventions and Notations.} We compose morphisms in a given category in a diagrammatic order. For a ring $R$ we work usually with left $R$-modules and the corresponding category is denoted by $\Mod R$. Our additive categories are assumed to have finite direct sums and our subcategories are assumed to be closed under isomorphisms and direct summands. For all unexplained notions and results concerning the representation theory of Artin algebras we refer to the book \cite{ARS}.

\section{Preliminaries on Morita Rings}
Let $A$ and $B$ be rings, $_AN_B$ an $A$-$B$-bimodule, $_BM_A$ a $B$-$A$-bimodule, and $\phi \colon   M\otimes_{A}N \lxr B$ a $B$-$B$-bimodule homomorphism, and $\psi \colon N\otimes_{B}M \lxr A$ an $A$-$A$-bimodule homomorphism. Then from the Morita context $\M = (A,N,M,B, \phi, \psi)$ we define the \textsf{Morita ring}$\colon$
\[
\Lambda_{(\phi,\psi)}(\M)=
         \begin{pmatrix}
           A & _AN_B \\
           _BM_A & B \\
         \end{pmatrix} 
\]
where the addition of elements of $\Lambda_{(\phi,\psi)}$ is componentwise and multiplication is given by
\[
         \begin{pmatrix}
           a & n \\
           m & b \\
         \end{pmatrix}
       \cdot 
         \begin{pmatrix}
           a' & n' \\
           m' & b' \\
         \end{pmatrix}=
         \begin{pmatrix}
           aa'+\psi(n\otimes m') & an'+nb' \\
           ma'+bm' & bb'+\phi(m\otimes n') \\
         \end{pmatrix}
\]
We assume that $\phi(m\otimes n)m'=m\psi(n\otimes m')$ and $n\phi(m\otimes n')=\psi(n\otimes m)n'$ for all $m,m'\in M$ and $n,n'\in N$. This condition ensures that $\Lambda_{(\phi,\psi)}(\M)$ is an associative ring. From now on we will write for simplicity $\Lambda_{(\phi,\psi)}$ instead of $\Lambda_{(\phi,\psi)}(\M)$.

\begin{rem}
Morita rings have appeared in the literature under various names, for instance$\colon$ the ring of the Morita context \cite{McConnel} and generalized matrix rings \cite{Green}, \cite{Palmer}.
\end{rem}

Since we are interested in Artin algebras, the following easy result characterizes when a Morita ring is an Artin algebra. 

\begin{prop}
Let $\Lambda_{(\phi,\psi)}=\bigl(\begin{smallmatrix}
A & _AN_B \\
_BM_A & B
\end{smallmatrix}\bigr)$ be a Morita ring. Then $\Lambda_{(\phi,\psi)}$ is an Artin algebra if and only if there is a commutative artin ring $R$ such that $A$ and $B$ are Artin $R$-algebras and $M$ and $N$ are finitely generated 
over $R$ which acts centrally both on $M$ and $N$.
\end{prop}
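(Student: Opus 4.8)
The plan is to prove the two implications separately, in both cases exploiting the orthogonal idempotents $e = \bigl(\begin{smallmatrix} 1_A & 0 \\ 0 & 0\end{smallmatrix}\bigr)$ and $f = \bigl(\begin{smallmatrix} 0 & 0 \\ 0 & 1_B\end{smallmatrix}\bigr)$ of $\Lambda := \Lambda_{(\phi,\psi)}$, for which $e\Lambda e \cong A$ and $f\Lambda f \cong B$ as rings, $e\Lambda f \cong N$ and $f\Lambda e \cong M$ as bimodules, and $\Lambda = e\Lambda e \oplus e\Lambda f \oplus f\Lambda e \oplus f\Lambda f$ as abelian groups.

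For the ``if'' direction I would start from the given commutative artinian ring $R$ together with the structure maps $R\to Z(A)$, $r\mapsto r_A$, and $R\to Z(B)$, $r\mapsto r_B$, and define $R\to\Lambda$ by $r\mapsto\bigl(\begin{smallmatrix} r_A & 0\\ 0 & r_B\end{smallmatrix}\bigr)$. First I would check this is a ring homomorphism: since $\phi$ and $\psi$ vanish on any tensor having a zero entry, the product $\bigl(\begin{smallmatrix} r_A & 0\\ 0 & r_B\end{smallmatrix}\bigr)\bigl(\begin{smallmatrix} s_A & 0\\ 0 & s_B\end{smallmatrix}\bigr)$ is simply $\bigl(\begin{smallmatrix}(rs)_A & 0\\ 0 & (rs)_B\end{smallmatrix}\bigr)$, and the identity maps to the identity. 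Next I would check the image lies in the centre: computing $\bigl(\begin{smallmatrix} r_A & 0\\ 0 & r_B\end{smallmatrix}\bigr)\bigl(\begin{smallmatrix} a & n\\ m & b\end{smallmatrix}\bigr)$ and $\bigl(\begin{smallmatrix} a & n\\ m & b\end{smallmatrix}\bigr)\bigl(\begin{smallmatrix} r_A & 0\\ 0 & r_B\end{smallmatrix}\bigr)$ from the multiplication formula and comparing entries, equality amounts exactly to $r_A a = a r_A$, $r_B b = b r_B$ (which hold because $r_A, r_B$ are central) together with $r_A n = n r_B$ and $r_B m = m r_A$ --- and these last two are precisely the hypothesis that $R$ acts centrally on $N$ and on $M$. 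Finally, since $\Lambda = A\oplus N\oplus M\oplus B$ as $R$-modules with each summand finitely generated over $R$, $\Lambda$ is finitely generated over $R$, hence an Artin $R$-algebra.

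For the ``only if'' direction I would assume $\Lambda$ is an Artin $R$-algebra via a structure map $R\to Z(\Lambda)$ with $R$ commutative artinian and $\Lambda$ finitely generated over $R$. The key observation is that every element $r$ in the image of $R$ commutes with $e$ and $f$, so that $ere = er = re$ and similarly for $f$. I would use this to verify that $r\mapsto ere$ and $r\mapsto frf$ are ring homomorphisms from $R$ into the centres of the corner rings $e\Lambda e\cong A$ and $f\Lambda f\cong B$, making $A$ and $B$ into $R$-algebras. Centrality of $r$ also gives $r\cdot(f\lambda e) = f(r\lambda)e$, so $e\Lambda e$, $f\Lambda f$, $e\Lambda f$, $f\Lambda e$ are $R$-submodules of $\Lambda$; as $R$ is artinian, hence Noetherian, and $\Lambda$ is finitely generated over $R$, each of these submodules is finitely generated over $R$. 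Thus $A$ and $B$ are Artin $R$-algebras and $M\cong f\Lambda e$, $N\cong e\Lambda f$ are finitely generated over $R$. Lastly, for $m\in f\Lambda e$ one has $fm = m = me$ and $rm\in f\Lambda e$, whence $r_B\cdot m = frf\cdot m = frm = rm$ and $m\cdot r_A = m\cdot ere = mre = rm$; so the $R$-module structures on $M$ induced through $B$ (acting on the left) and through $A$ (acting on the right) coincide, i.e. $R$ acts centrally on $M$, and symmetrically on $N$.

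Apart from the routine matrix and corner-ring identities, the one point needing genuine care is the ``only if'' direction: one must check that the corner assignments $r\mapsto ere$, $r\mapsto frf$ really land in $Z(A)$, $Z(B)$, and --- crucially --- that the two $R$-module structures they induce on $M$ and $N$ (one through $A$, one through $B$) agree, which is exactly the centrality condition appearing in the statement. But all of this is forced by the centrality of $R$ in $\Lambda$ together with the elementary relations $e\lambda e\cdot e\mu e = e\lambda e\mu e$ and $ere = er = re$ for central $r$; everything else is the additive structure of the matrix ring and Noetherianity of $R$.
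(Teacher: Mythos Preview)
Your argument is correct and is exactly the standard verification one would expect. Note, however, that the paper does not actually give a proof of this proposition: it is stated as an ``easy result'' and left without proof, so there is nothing to compare your argument against beyond observing that your use of the idempotents $e,f$ and the Peirce decomposition $\Lambda=e\Lambda e\oplus e\Lambda f\oplus f\Lambda e\oplus f\Lambda f$ is precisely the natural route (and the one implicitly suggested by the paper's earlier discussion of those idempotents).
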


The description of the modules over a Morita ring $\Lambda_{(\phi,\psi)}$ is well known, see for instance \cite{Green}, but for completeness and due to our needs we include it also here. For this reason we introduce the following category. Let $\M(\Lambda)$ be the category whose objects are tuples $(X,Y,f,g)$
where $X\in \Mod{A}$, $Y\in \Mod{B}$, $f\in\Hom_B(M\otimes_AX,Y)$ and $g\in \Hom_A(N\otimes_BY,X)$ such that the following diagrams are commutative$\colon$
\[
\xymatrix{
  N\otimes_B M\otimes_A X \ar[d]_{\psi\otimes \iden_X} \ar[r]^{\ \ \ \ N\otimes f} &  N\otimes_BY \ar[d]^{g}     \\
  A\otimes_AX    \ar[r]^{\simeq} & X                  } \ \ \ \ \ \ \  \ \ \  \xymatrix{
  M\otimes_A N\otimes_B Y \ar[d]_{\phi\otimes \iden_Y} \ar[r]^{\ \ \ M\otimes g} &  M\otimes_AX \ar[d]^{f}     \\
  B\otimes_BY    \ar[r]^{\simeq} & Y                  }
\]
We denote by $\Psi_X$ and $\Phi_Y$ the following compositions$\colon$
\[
\xymatrix@C=0.5cm{
  N{\otimes_B}M{\otimes_A}X \ar[rr]^{ \ \ \ \ \psi\otimes \iden_X}  \ar @/^1.5pc/[rrrr]^{{\Psi_X}} && A\otimes_AX  \ar[rr]^{\simeq} && X  } \ \ \ \ \xymatrix@C=0.5cm{
  M\otimes_AN\otimes_BY \ar[rr]^{ \ \ \ \ \phi\otimes \iden_Y}  \ar @/^1.5pc/[rrrr]^{{\Phi_Y}} && B\otimes_BY  \ar[rr]^{\simeq} && Y  }
\]
Let $(X,Y,f,g)$ and $(X',Y',f',g')$ be objects of $\M(\Lambda)$. Then a morphism $(X,Y,f,g)\lxr (X',Y',f',g')$ in $\M(\Lambda)$ is a pair of homomorphisms $(a,b)$ where $a\colon X\lxr X'$ is an $A$-morphism and $b\colon Y\lxr Y'$ is a $B$-morphism such that the following diagrams are commutative$\colon$
\[
\xymatrix{
  M\otimes_A X \ar[d]_{M\otimes a} \ar[r]^{\ \ \ f} & Y \ar[d]^{b}     \\
  M\otimes_AX'    \ar[r]^{\ \ \ f'} & Y'                  } \ \ \ \ \ \ \  \ \ \  \xymatrix{
  N\otimes_B Y \ar[d]_{N\otimes b} \ar[r]^{\ \ \  g} &  X \ar[d]^{a}     \\
  N\otimes_BY'    \ar[r]^{\ \ \ g'} & X'                  }
\]
Dually since the functors $M{\otimes_A}-\colon \Mod{A}\lxr \Mod{B}$ and $N\otimes_B-\colon \Mod{B}\lxr \Mod{A}$ have right adjoints we can define the category $\widetilde{\M}(\Lambda)$. We denote by
\[
\xymatrix@C=0.5cm{
  \pi:\Hom_{B}(M\otimes_{A}X,Y) \ar[r]^{\simeq \ \   } & \Hom_{A}(X,\Hom_{B}(M,Y))
  } \ \text{and} \ \xymatrix@C=0.5cm{
  \rho:\Hom_{A}(N\otimes_{B}Y,X) \ar[r]^{\simeq \ \  } & \Hom_{B}(Y,\Hom_{A}(N,X))
  }
\]
adjoint isomorphims and let $\epsilon\colon M\otimes_{A}\Hom_B(M,-)\lxr \iden_{\Mod{B}}$, resp.
${\epsilon}'\colon N\otimes_B\Hom_A(N,-)\lxr \iden_{\Mod{A}}$, and
$\delta\colon \iden_{\Mod{A}}\lxr \Hom_B(M,M\otimes_A-)$, resp.
$\delta'\colon \iden_{\Mod{B}}\lxr \Hom_A(N,N\otimes_B-)$, be the counit and the unit of the adjoint pair $(M\otimes_A-,\Hom_A(M,-))$, resp. $(N\otimes_B-,\Hom_A(N,-))$. The objects of $\widetilde{\M}$ are tuples $(X,Y,\kappa,\lambda)$ where $X\in \Mod{A}$,
$Y\in \Mod{B}$, $\kappa\colon X\lxr \Hom_{B}(M,Y)$ and
$\lambda\colon Y\lxr \Hom_{A}(N,X)$ such that the following
diagrams are commutative$\colon$
\[
\xymatrix{
   X \ar[d]_{\kappa} \ar[r]^{\simeq \ \ \ \ \ \ } &  \Hom_{A}(A,X) \ar[d]^{}     \\
  \Hom_{B}(M,Y)    \ar[r]^{(M,\lambda) \ \ \ \ \ \ \ \ } & \Hom_{B}(M,\Hom_{A}(N,X))                  } \ \ \ \ \ \ \  \ \ \  \xymatrix{
   Y \ar[d]_{\lambda} \ar[r]^{\simeq \ \ \ \ \ \ } &  \Hom_{B}(B,Y) \ar[d]^{}     \\
  \Hom_{A}(N,X)    \ar[r]^{(N,\kappa) \ \ \ \ \ \ \ \ } & \Hom_{A}(N,\Hom_{B}(M,Y))
  }
\]
Let $(X,Y,\kappa,\lambda)$ and
$(X',Y',{\kappa}',{\lambda}')$ be objects in
$\widetilde{\M}(\Lambda)$. Then a morphism
$(X,Y,\kappa,\lambda)\lxr (X',Y',{\kappa}',{\lambda}')$ in
$\widetilde{\M}(\Lambda)$ is a pair of homomorphisms $(c,d)$ where
$c\colon X\lxr X'$ is an $A$-morphism and $d\colon Y\lxr Y'$ is a $B$-morphism such that the following diagrams are commutative$\colon$
\[
\xymatrix{
   X \ar[d]_{c} \ar[r]^{\kappa \ \ \ \ \ \ } & \Hom_{B}(M,Y) \ar[d]^{\Hom_{B}(M,d)}     \\
  X'    \ar[r]^{\kappa' \ \ \ \ \ \ } & \Hom_{B}(M,Y')                  } \ \ \ \ \ \ \  \ \ \ \xymatrix{
   Y \ar[d]_{d} \ar[r]^{\lambda \ \ \ \ \ \ } & \Hom_{A}(N,X) \ar[d]^{\Hom_{A}(N,c)}     \\
  Y'    \ar[r]^{\lambda' \ \ \ \ \ \ } & \Hom_{A}(N,X')                  } 
\]
We define the functor $\mf\colon \M(\Lambda)\lxr
\widetilde{\M}(\Lambda)$ by $\mf(X,Y,f,g)=(X,Y,\pi(f),\rho(g))$ on objects and
$\mf(a,b)=(a,b)$ on morphisms. Then it is straightforward that
$\mf\colon \M(\Lambda)\lxr \widetilde{\M}(\Lambda)$ is an isomorphism
of categories with inverse given by
$\mg(X,Y,\kappa,\lambda)=(X,Y,\pi^{-1}(\kappa),\rho^{-1}(\lambda))$ on
objects and $\mg(a,b)=(a,b)$ on morphisms. The relationship between $\Mod{\Lambda_{(\phi,\psi)}}$ and $\M(\Lambda)$ is given via the functor $\mf'\colon \M(\Lambda)\lxr \Mod{\Lambda_{(\phi,\psi)}}$ which is defined as follows. If $(X,Y,f,g)$ is an oblect of $\M(\Lambda)$, then we define $\mf'(X,Y,f,g)=X\oplus Y$ as abelian groups, and the $\Lambda_{(\phi,\psi)}$-module structure is given by
\[
         \begin{pmatrix}
           a & n \\
           m & b \\
         \end{pmatrix}(x,y)=(ax+g(n\otimes y), by+f(m\otimes x)) 
\]
for all $a\in A, b\in B, n\in N, m\in M, x\in X$ and $y\in Y$. One can verify easily that the object $\mf'(X,Y,f,g)$ is a $\Lambda_{(\phi,\psi)}$-module. If $(a,b)\colon (X,Y,f,g)\lxr (X',Y',f',g')$ is a morphism in
$\M$ then $\mf'(a,b)=a\oplus b\colon X\oplus Y\lxr X'\oplus Y'$.

\begin{prop}\label{tuples}
Let $\Lambda_{(\phi,\psi)}=\bigl(\begin{smallmatrix}
A & _AN_B \\
_BM_A & B
\end{smallmatrix}\bigr)$ be a Morita ring. Then the categories $\Mod{\Lambda_{(\phi,\psi)}}$, $\M(\Lambda)$ and $\widetilde{\M}(\Lambda)$ are equivalent.
\begin{proof}
See \cite[Theorem $1.5$]{Green}.
\end{proof}
\end{prop}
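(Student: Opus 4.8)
The category $\widetilde{\M}(\Lambda)$ has already been identified with $\M(\Lambda)$ through the isomorphism of categories $\mf$, so it suffices to prove the remaining equivalence $\M(\Lambda)\simeq\Mod{\Lambda_{(\phi,\psi)}}$; the plan is to show that the functor $\mf'$ introduced above is an equivalence, by exhibiting an explicit quasi-inverse $\mg'$ obtained by restricting along the two corner idempotents. The first task is to check that $\mf'$ is well defined on objects, that is, that for $(X,Y,f,g)\in\M(\Lambda)$ the prescribed formula really makes $X\oplus Y$ into a left $\Lambda_{(\phi,\psi)}$-module. Unitality follows at once from the canonical isomorphisms $A\otimes_AX\simeq X$ and $B\otimes_BY\simeq Y$. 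For associativity of the action one expands
\[
\Bigl[\bigl(\begin{smallmatrix} a & n \\ m & b\end{smallmatrix}\bigr)\bigl(\begin{smallmatrix} a' & n' \\ m' & b'\end{smallmatrix}\bigr)\Bigr](x,y)\qquad\text{and}\qquad\bigl(\begin{smallmatrix} a & n \\ m & b\end{smallmatrix}\bigr)\Bigl[\bigl(\begin{smallmatrix} a' & n' \\ m' & b'\end{smallmatrix}\bigr)(x,y)\Bigr]
\]
using the multiplication rule of $\Lambda_{(\phi,\psi)}$, the bimodule axioms of $N$ and $M$, and the fact that $f$ and $g$, being defined on $M\otimes_AX$ and $N\otimes_BY$, are balanced over $A$ and $B$. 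All terms then cancel against each other except for one pair in each of the two coordinates, and equality of those reduces precisely to the relations $g(n\otimes f(m\otimes x))=\Psi_X(n\otimes m\otimes x)$ and $f(m\otimes g(n\otimes y))=\Phi_Y(m\otimes n\otimes y)$, i.e. to the commutativity of the two squares defining $\M(\Lambda)$ (the associativity conditions imposed on the Morita context $\M$ being what ensures $\Lambda_{(\phi,\psi)}$ is an associative ring in the first place). Checking that $\mf'$ carries a morphism $(a,b)$ of $\M(\Lambda)$ to a $\Lambda_{(\phi,\psi)}$-homomorphism $a\oplus b$, and conversely, is then a direct comparison of the two defining squares of a morphism in $\M(\Lambda)$ with the single requirement that $a\oplus b$ commutes with the $\Lambda_{(\phi,\psi)}$-action.

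Next I would produce the quasi-inverse $\mg'\colon\Mod{\Lambda_{(\phi,\psi)}}\lxr\M(\Lambda)$. Set $e_1=\bigl(\begin{smallmatrix} 1_A & 0 \\ 0 & 0\end{smallmatrix}\bigr)$ and $e_2=\bigl(\begin{smallmatrix} 0 & 0 \\ 0 & 1_B\end{smallmatrix}\bigr)$, so that $1_\Lambda=e_1+e_2$, $e_1\Lambda e_1=A$, $e_2\Lambda e_2=B$, $e_2\Lambda e_1=M$ and $e_1\Lambda e_2=N$. Given a left $\Lambda_{(\phi,\psi)}$-module $Z$, put $X:=e_1Z$, viewed as a left $A$-module, and $Y:=e_2Z$, viewed as a left $B$-module, so that $Z=X\oplus Y$ as an abelian group. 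The multiplication map $e_2\Lambda e_1\times e_1Z\lxr e_2Z$ is biadditive and $A$-balanced, hence factors through a $B$-homomorphism $f_Z\colon M\otimes_AX\lxr Y$; symmetrically $e_1\Lambda e_2\times e_2Z\lxr e_1Z$ gives an $A$-homomorphism $g_Z\colon N\otimes_BY\lxr X$. Applying associativity of the $\Lambda_{(\phi,\psi)}$-action to the products $\bigl(\begin{smallmatrix} 0 & n \\ 0 & 0\end{smallmatrix}\bigr)\bigl(\begin{smallmatrix} 0 & 0 \\ m & 0\end{smallmatrix}\bigr)$ acting on elements of $X$, and $\bigl(\begin{smallmatrix} 0 & 0 \\ m & 0\end{smallmatrix}\bigr)\bigl(\begin{smallmatrix} 0 & n \\ 0 & 0\end{smallmatrix}\bigr)$ acting on elements of $Y$, yields exactly the two commutative squares, so $(X,Y,f_Z,g_Z)$ is an object of $\M(\Lambda)$; on a morphism $h\colon Z\lxr Z'$ one takes the pair $(e_1h,e_2h)$. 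This defines $\mg'$.

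Finally one checks that $\mf'$ and $\mg'$ are mutually quasi-inverse. On underlying abelian groups both composites $\mf'\circ\mg'$ and $\mg'\circ\mf'$ are the identity up to the canonical identifications $Z=e_1Z\oplus e_2Z$ and $e_i(X\oplus Y)\cong$ the $i$-th summand; the $A$- and $B$-module structures recovered on $e_1Z$ and $e_2Z$ agree with the originals because $e_1\Lambda e_1=A$ and $e_2\Lambda e_2=B$ act as prescribed, and the structure maps $f$, $g$ are recovered from the action of the off-diagonal elementary matrices, so these identifications are natural and $\mf'$ is an equivalence (indeed an isomorphism of categories once the identifications are made explicit). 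Composing with $\mf$ delivers all three equivalences of the statement. I expect the only genuinely delicate step to be the well-definedness verification in the first paragraph: one has to carry along the canonical isomorphisms $A\otimes_AX\simeq X$, $B\otimes_BY\simeq Y$ and the tensor balancings and see that the associativity axiom for the $\Lambda_{(\phi,\psi)}$-action splits coordinatewise into precisely the two squares cutting out $\M(\Lambda)$; everything else is routine bookkeeping.
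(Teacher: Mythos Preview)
Your proof is correct and is exactly the standard argument; the paper itself gives no proof beyond a citation to \cite[Theorem~1.5]{Green}, so you are supplying the details the authors defer. You use precisely the functors $\mf$ and $\mf'$ that the paper has already set up before the proposition, and your quasi-inverse $\mg'$ via the corner idempotents $e_1,e_2$ is the expected construction (and is in fact how the paper later exploits the equivalence, e.g.\ in Proposition~\ref{recollement}).
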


From now on we will identify the modules over $\Lambda_{(\phi,\psi)}$ with the objects of $\M(\Lambda)$. We define the following functors$\colon$
\begin{enumerate}
\item The functor $\mt_{A}\colon \Mod{A}\lxr \Mod{\Lambda_{(\phi,\psi)}}$ is defined by
$\mt_{A}(X)=(X,M\otimes_AX,\iden_{M\otimes X},\Psi_X)$ on the objects $X\in \Mod{A}$ and given an $A$-morphism $a\colon X\lxr X'$ then
$\mt_{A}(a)=(a,M\otimes a)$.

\item The functor $\mU_{A}\colon \Mod{\Lambda_{(\phi,\psi)}}\lxr \Mod{A}$ is defined by
$\mU_{A}(X,Y,f,g)=X$ on the objects $(X,Y,f,g)\in \Mod{\Lambda_{(\phi,\psi)}}$ and
given a morphism $(a,b)\colon (X,Y,f,g)\lxr (X',Y',f',g')$ in
$\Mod{\Lambda_{(\phi,\psi)}}$ then $\mU_{A}(a,b)=a$.

\item The functor $\mt_{B}\colon \Mod{B}\lxr \Mod{\Lambda_{(\phi,\psi)}}$ is defined by
$\mt_{B}(Y)=(N\otimes_BY,Y,\Phi_Y,\iden_{N\otimes Y})$ on the objects $Y\in \Mod{B}$ and given a $B$-morphism $b\colon Y\lxr Y'$ then $\mt_{B}(b)=(N\otimes b,b)$.

\item The functor $\mU_{B}\colon \Mod{\Lambda_{(\phi,\psi)}}\lxr \Mod{B}$ is defined by
$\mU_{B}(X,Y,f,g)=Y$ on the $\Lambda_{(\phi,\psi)}$-modules $(X,Y,f,g)$ and given a $\Lambda_{(\phi,\psi)}$-morphism $(a,b)\colon (X,Y,f,g)\lxr
(X',Y',f',g')$ then $\mU_{B}(a,b)=b$.

\item The functor $\mh_{A}\colon \Mod{A}\lxr \Mod{\Lambda_{(\phi,\psi)}}$ is defined by
$\mh_{A}(X)=(X,\Hom_A(N,X),\delta'_{M\otimes X}\circ \Hom_A(N,\Psi_X),\epsilon'_X)$ on the objects $X\in \Mod{A}$ and given an $A$-morphism $a\colon X\lxr X'$ then
$\mh_{A}(a)=(a,\Hom_A(N,a))$.

\item The functor $\mh_{B}\colon \Mod{B}\lxr \Mod{\Lambda_{(\phi,\psi)}}$ is defined by
$\mh_{B}(Y)=(\Hom_B(M,Y),Y,\epsilon_Y,\delta_{N\otimes Y}\circ \Hom_B(M,\Phi_Y))$ on the objects $Y\in \Mod{B}$ and given a $B$-morphism $b\colon Y\lxr Y'$ then
$\mh_{B}(b)=(\Hom_B(M,b),b)$.

\item Suppose that $\phi=0=\psi$. Then we define the functor $\mz_{A}\colon \Mod{A}\lxr \Mod{\Lambda_{(0,0)}}$ by $\mz_{A}(X)=(X,0,0,0)$ on the objects $X\in \Mod{A}$ and if
$a\colon X\lxr X'$ is an $A$-morphism then $\mz_{A}(a)=(a,0)$. Dually we define the functor $\mz_{B}\colon \Mod{B}\lxr \Mod{\Lambda_{(0,0)}}$.
\end{enumerate}

The following result gives more informations about these
functors and the module category over Morita rings. For the notion of recollements of abelian categories we refer to \cite{Pira}, \cite{recol}.

\begin{prop}\label{recollement}
\begin{enumerate}
\item The functors $\mt_{A}$, $\mt_{B}$ and $\mh_{A}, \mh_{B}$ are  fully faithful.

\item The pairs $(\mt_{A}, \mU_{A})$, $(\mt_{B}, \mU_{B})$ and $(\mU_{A}, \mh_{A})$, $(\mU_B, \mh_B)$ are adjoint pairs of functors.   

\item The functors $\mU_A$ and $\mU_{B}$ are exact.

\item We have $\Ker{\mU_A}=\Mod{\Lambda/\Lambda e_1 \Lambda}\simeq \Mod{{B/\Image{\phi}}}$, $\Ker{\mU_B}=\Mod{\Lambda/\Lambda e_2 \Lambda}\simeq \Mod{{A/\Image{\psi}}}$, $\Mod{e_1\Lambda e_1}=\Mod{A}$ and $\Mod{e_2\Lambda  e_2}=\Mod{B}$, where $e_1=\bigl(\begin{smallmatrix}
1 & 0 \\
0 & 0
\end{smallmatrix}\bigr)$ and $e_2=\bigl(\begin{smallmatrix}
0 & 0 \\
0 & 1
\end{smallmatrix}\bigr)$ are idempotent elements of $\Lambda_{(\phi,\psi)}$.

\item The following diagrams$\colon$ 
\[
 \ \ \ \ \ \ \ \xymatrix@C=0.2cm{
\Mod{{B/\Image{\phi}}} \ar[rrr]^{\inc} &&& \Mod{\Lambda_{(\phi,\psi)}} \ar[rrr]^{\mU_A } \ar @/_1.5pc/[lll]_{}  \ar
 @/^1.5pc/[lll]^{} &&& \Mod{A}
\ar @/_1.5pc/[lll]_{\mt_A} \ar
 @/^1.5pc/[lll]^{\mh_A}
 } \ \ \ \ \xymatrix@C=0.2cm{
\Mod{{A/\Image{\psi}}} \ar[rrr]^{\inc} &&& \Mod{\Lambda_{(\phi,\psi)}} \ar[rrr]^{\mU_B } \ar @/_1.5pc/[lll]_{}  \ar
 @/^1.5pc/[lll]^{} &&& \Mod{B}
\ar @/_1.5pc/[lll]_{\mt_B} \ar
 @/^1.5pc/[lll]^{\mh_B}
 }
\] 
are recollements of abelian categories. If $\phi=0=\psi$ then we have the following recollements$\colon$
\[
 \ \ \ \ \ \ \ \ \xymatrix@C=0.4cm{
\Mod{B} \ar[rrr]^{\mz_B \ \ } &&& \Mod{\Lambda_{(0,0)}} \ar[rrr]^{\mU_A} \ar @/_1.5pc/[lll]_{}  \ar
 @/^1.5pc/[lll]^{} &&& \Mod{A}
\ar @/_1.5pc/[lll]_{\mt_A} \ar
 @/^1.5pc/[lll]^{\mh_A}
 } \ \ \ \ \xymatrix@C=0.4cm{
\Mod{A} \ar[rrr]^{\mz_A \ \ } &&& \Mod{\Lambda_{(0,0)}} \ar[rrr]^{\mU_B} \ar @/_1.5pc/[lll]_{}  \ar
 @/^1.5pc/[lll]^{} &&& \Mod{B}
\ar @/_1.5pc/[lll]_{\mt_B} \ar
 @/^1.5pc/[lll]^{\mh_B}
 }
\]

\end{enumerate}
\begin{proof}
Parts (i)-(iv) are easy to verified, see also \cite{Psaroud}. For (v) recall that the recollement situation of $\Mod{\Lambda_{(\phi,\psi)}}$ means that $(\mt_A, \mU_A)$, $(\mU_A, \mh_A)$ are adjoint pairs, $\mt_A$, $\mh_A$ and $\inc$ are fully faithful and $\Mod{{B/\Image{\phi}}}$ is equivalent with $\Ker{\mU_A}=\{(0,Y,0,0) \ | \ Y\in \Mod{B}\}$. But this holds from (i)-(iv) and similarly we deduce that the rest diagrams are recollements of abelian categories as well. Note that the unlabeled functors in each diagram are induced from the counit of the adjoint pair $(\mt_A,\mU_A)$, resp. $(\mt_B,\mU_B)$, and the unit of the adjoint pair $(\mU_A,\mh_A)$, resp. $(\mU_B,\mh_B)$. This result is a special case of a more general statement in \cite{Psaroud}, see also \cite{recol}.
\end{proof}
\end{prop}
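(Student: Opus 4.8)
The plan is to verify the four defining features of a recollement of abelian categories, using Proposition~\ref{recollement}(i)--(iv) which have already been recorded. Concretely, for the left-hand recollement in the list I must show: (a) $(\mt_A,\mU_A)$ and $(\mU_A,\mh_A)$ are adjoint pairs; (b) $\mt_A$, $\mh_A$, and the inclusion of the kernel are fully faithful; (c) $\mU_A$ is exact; and (d) the kernel $\Ker\mU_A$ is (equivalent to) the essential image of the sixth ``inclusion'' functor, together with the fact that the two composites along the top row vanish appropriately. Items (a), (b), (c) are exactly parts (i), (ii), (iii) of Proposition~\ref{recollement}, so there is nothing further to prove there; the content of the argument is entirely in identifying the kernels and the images.

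First I would spell out, from the explicit description of $\M(\Lambda)$ and of the functor $\mU_A(X,Y,f,g)=X$, that
\[
\Ker\mU_A \;=\; \{(0,Y,0,0)\ \mid\ Y\in\Mod B,\ 0\otimes\text{-maps forced to }0\}.
\]
Here one uses that if $X=0$ then necessarily $f\colon M\otimes_A 0\to Y$ and $g\colon N\otimes_B Y\to 0$ are the zero maps, and that the two compatibility squares in the definition of $\M(\Lambda)$ are then automatically satisfied. A morphism between two such objects is just a pair $(0,b)$ with $b$ a $B$-morphism, so $(0,Y,0,0)\mapsto Y$ defines an equivalence $\Ker\mU_A\xrightarrow{\ \simeq\ }\Mod B$ in the case $\phi=0=\psi$. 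When $\phi$ is arbitrary, the constraint is instead that $Y$ be annihilated by $\Image\phi$, i.e. $\Ker\mU_A\simeq\Mod(B/\Image\phi)$, which is exactly the identification $\Ker\mU_A=\Mod(\Lambda/\Lambda e_1\Lambda)$ from part~(iv); the ring isomorphism $\Lambda/\Lambda e_1\Lambda\cong B/\Image\phi$ follows by a direct computation of the two-sided ideal $\Lambda e_1\Lambda$ inside the matrix ring. The inclusion of this kernel into $\Mod{\Lambda_{(\phi,\psi)}}$ is fully faithful because it is (up to the equivalence) the functor $\mz_B$ in the zero case, and in general the evident fully faithful embedding of a module category over a quotient ring. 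Dually one treats $\mU_B$, $\mt_B$, $\mh_B$ and $\Ker\mU_B\simeq\Mod(A/\Image\psi)$, which gives the right-hand recollement; and specializing $\phi=0=\psi$ in both gives the last two displayed recollements, where now $\mz_B$ and $\mz_A$ literally realize the kernel inclusions.

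The remaining point is the compatibility of the ``jointly full'' condition: the two outer functors fit together so that the recollement axioms linking the left adjoint triple and the right adjoint triple hold, i.e. the counit of $(\mt_A,\mU_A)$ and the unit of $(\mU_A,\mh_A)$ have, as kernel resp. cokernel, objects lying in $\Ker\mU_A$. This is a formal consequence of $\mU_A$ being exact together with $\mU_A\mt_A\cong\iden\cong\mU_A\mh_A$ (which is the counit/unit being an iso after applying $\mU_A$), and it is precisely here that one invokes that the whole picture is the specialization of the general recollement attached to an idempotent $e_1\in\Lambda$, as in \cite{Psaroud}, \cite{recol}: for any ring $\Lambda$ and idempotent $e$, the triple $(\Lambda e\otimes_{e\Lambda e}-,\ e\Lambda\otimes_\Lambda-=\Hom_\Lambda(\Lambda e,-),\ \Hom_{e\Lambda e}(e\Lambda,-))$ together with $\Mod(\Lambda/\Lambda e\Lambda)\hookrightarrow\Mod\Lambda$ is a recollement. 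Identifying $e_1\Lambda e_1=A$, $\Lambda e_1\otimes_A-\cong\mt_A$, $e_1\Lambda\otimes_\Lambda-\cong\mU_A$, $\Hom_A(e_1\Lambda,-)\cong\mh_A$ (a bookkeeping check on the matrix description of $\Mod{\Lambda_{(\phi,\psi)}}$ via $\M(\Lambda)$) then finishes the proof; the same with $e_2$ gives the $\mU_B$-recollement. I expect the main obstacle to be purely notational: matching the abstract $e_1$-recollement functors with the concretely defined $\mt_A,\mU_A,\mh_A$ (and similarly the kernel identification $\Lambda/\Lambda e_1\Lambda\cong B/\Image\phi$) is a routine but slightly fiddly computation inside the $2\times 2$ matrix ring, with no conceptual difficulty once the dictionary of Proposition~\ref{tuples} is in hand.
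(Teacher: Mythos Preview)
Your proposal is correct and follows essentially the same approach as the paper: both argue that parts (i)--(iv) supply exactly the adjointness, full faithfulness, exactness, and kernel identification needed for the recollement axioms, and both appeal to the general idempotent recollement framework of \cite{Psaroud}, \cite{recol} as the conceptual backdrop. Your write-up is somewhat more explicit than the paper's---in particular you spell out why $\Ker\mU_A$ consists of tuples $(0,Y,0,0)$ with $Y$ annihilated by $\Image\phi$ (via the vanishing of $\Phi_Y$), and you make the dictionary between $\mt_A,\mU_A,\mh_A$ and the abstract functors $\Lambda e_1\otimes_A-$, $e_1\Lambda\otimes_\Lambda-$, $\Hom_A(e_1\Lambda,-)$ more visible---but there is no genuine difference in strategy.
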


The next result describes the Morita rings $\Lambda_{(\phi,\psi)}$ with $\phi=0=\psi$. For the notion of trivial extension of rings we refer to \cite{FGR}.

\begin{prop}\label{trivial}\cite{FGR}
Let $\Lambda_{(0,0)}=\bigl(\begin{smallmatrix}
A & _AN_B \\
_BM_A & B
\end{smallmatrix}\bigr)$ be a Morita ring where the bimodule morphisms $\phi$ and $\psi$ are zero. Then we have an isomorphism of rings$\colon$
\[
\xymatrix@C=0.5cm{
  \Lambda_{(0,0)} \ar[r]^{\simeq \ \ \ \ \ \ \ \ \ } & (A\times B)\ltimes M\oplus N  }
\]
where $(A\times B)\ltimes M\oplus N$ is the trivial
extension ring of $A\times B$ by the $(A\times B)$-$(A\times
B)$-bimodule $M\oplus N$.
\begin{proof}
First we have to show that the abelian group $M\oplus N$ is an
$(A\times B)$-$(A\times B)$-bimodule. We define the morphisms$\colon$
\[
(A\times B)\times M\oplus N\lxr M\oplus N, \
[(a,b),(m,n)]\mapsto (bm,an)
\]
and
\[
M\oplus N\times (A\times B)\lxr M\oplus N, \
[(m,n),(a,b)]\mapsto (ma,nb)
\]
Then it is easy to establish that $M\oplus N$ is a left
$A\times B$-module and a right $A\times B$-module. Also since
${_AN_B}$ and ${_BM_A}$ are bimodules it follows that 
$M\oplus N$ is an $(A\times
B)$-$(A\times B)$-bimodule. Hence we can define the trivial
extension $(A\times B)\ltimes M\oplus N$ with elements
$[(a,b),(m,n)]$ where $(a,b)\in A\times B$, $(m,n)\in M\oplus N$,
the addition is componentwise and the multiplication is given by
\begin{eqnarray}
 [(a_1,b_1),(m_1,n_1)]\cdot [(a_2,b_2),(m_2,n_2)] &=& [(a_1,b_1)\cdot (a_2,b_2),(a_1,b_1)\cdot (m_2,n_2)+(m_1,n_1)\cdot (a_2,b_2)]  \nonumber \\
 &=& [(a_1a_2,b_1b_2),(b_1m_2,a_1n_2)+(m_1a_2,n_1b_2)] \nonumber \\
 &=& [(a_1a_2,b_1b_2),(b_1m_2+m_1a_2,a_1n_2+n_1b_2)] \nonumber
\end{eqnarray}
Then it is straightforward that the morphism $\Lambda_{(0,0)} \lxr (A\times B)\ltimes M\oplus N, \ \bigl(\begin{smallmatrix}
a & n \\
m & b
\end{smallmatrix}\bigr)\mapsto [(a,b),(m,n)]$ is an isomorphism of rings.
\end{proof}
\end{prop}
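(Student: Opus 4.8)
The plan is to verify directly that the stated map $\theta\colon \Lambda_{(0,0)} \to (A\times B)\ltimes (M\oplus N)$, sending $\bigl(\begin{smallmatrix} a & n \\ m & b \end{smallmatrix}\bigr)$ to $[(a,b),(m,n)]$, is a bijective ring homomorphism, once the right-hand side has been given a well-defined ring structure. So the proof splits into two stages: first establishing that $M\oplus N$ is an $(A\times B)$-$(A\times B)$-bimodule, so that the trivial extension even makes sense; and second checking that $\theta$ respects multiplication (it is visibly an isomorphism of the underlying abelian groups, being the identity on each of the four components $A, N, M, B$).

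For the first stage I would define the left and right $A\times B$-actions on $M\oplus N$ by the "twisted" formulas $(a,b)\cdot(m,n) = (bm, an)$ and $(m,n)\cdot(a,b) = (ma, nb)$, as in the statement. The twist is forced: $M$ is a $B$-$A$-bimodule and $N$ is an $A$-$B$-bimodule, so the only sensible left action of $A\times B$ on the pair $(m,n)$ uses $B$ on $M$ and $A$ on $N$. One then checks the module axioms componentwise — associativity and unitality of each action follow immediately from the corresponding bimodule axioms for ${}_BM_A$ and ${}_AN_B$ — and the compatibility of the left and right actions (making $M\oplus N$ a bimodule) likewise reduces to the two-sided bimodule laws on each summand. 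This is entirely routine.

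For the second stage I would simply expand both sides of $\theta\bigl(\bigl(\begin{smallmatrix} a_1 & n_1 \\ m_1 & b_1 \end{smallmatrix}\bigr)\bigr)\cdot\theta\bigl(\bigl(\begin{smallmatrix} a_2 & n_2 \\ m_2 & b_2 \end{smallmatrix}\bigr)\bigr)$ using the trivial-extension multiplication recalled in the proof, obtaining $[(a_1a_2, b_1b_2),(b_1m_2 + m_1a_2,\ a_1n_2 + n_1b_2)]$, and compare with $\theta$ applied to the Morita-ring product. Here one uses crucially that $\phi = 0 = \psi$: the product in $\Lambda_{(0,0)}$ is $\bigl(\begin{smallmatrix} a_1a_2 & a_1n_2 + n_1b_2 \\ m_1a_2 + b_1m_2 & b_1b_2 \end{smallmatrix}\bigr)$ with no $\psi(n_1\otimes m_2)$ or $\phi(m_1\otimes n_2)$ correction terms, so it matches the trivial-extension product on the nose. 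Thus $\theta$ is multiplicative, and it clearly sends the identity $\bigl(\begin{smallmatrix} 1_A & 0 \\ 0 & 1_B \end{smallmatrix}\bigr)$ to $[(1_A,1_B),(0,0)]$.

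There is no real obstacle here; the content is conceptual rather than technical — the point is the observation that killing $\phi$ and $\psi$ exactly removes the "interaction" terms in the matrix product, leaving a square-zero extension. The only place demanding minor care is making sure the left/right action conventions on $M\oplus N$ are chosen with the correct twist so that the right-hand column of the Morita product, $a_1n_2 + n_1b_2$, really does coincide with the $N$-component of $(a_1,b_1)\cdot(m_2,n_2) + (m_1,n_1)\cdot(a_2,b_2)$; with the definitions above this works, and the verification is complete.
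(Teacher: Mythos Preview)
Your proposal is correct and follows essentially the same approach as the paper's own proof: define the twisted $(A\times B)$-bimodule structure on $M\oplus N$ via $(a,b)\cdot(m,n)=(bm,an)$ and $(m,n)\cdot(a,b)=(ma,nb)$, compute the trivial-extension product explicitly, and verify that the obvious map $\bigl(\begin{smallmatrix} a & n \\ m & b \end{smallmatrix}\bigr)\mapsto [(a,b),(m,n)]$ is a ring isomorphism using $\phi=\psi=0$. Your write-up is in fact slightly more explicit than the paper's (you check the identity element and articulate why the twist is forced), but the argument is the same.
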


\subsection{Examples of Morita Rings} 
We continue by giving a variety of examples of Morita rings which will be used throughout the paper. 

The first example shows that any ring with an idempotent element is a Morita ring.

\begin{exam}
Let $R$ be a ring with an idempotent element $e$. Then from the Pierce decomposition of $R$ with respect to the idempotents $e, f=1_{R}-e$ it follows that $R$ is the Morita ring with $A=eRe$, $B=(1-e)R(1-e)$, $N=eR(1-e)$, $M=(1-e)Re$ and the bimodule homomorphisms $\phi$, $\psi$ are induced by the multiplication in $R$.
\end{exam}

The following example is well known from Morita equivalence.

\begin{exam}
Let $A$ be a ring and $P$ be an $A$-module. Then we have the following Morita ring$\colon$ 
\[
\Lambda_{(\phi,\psi)}=\begin{pmatrix}
A & {\Hom_{A}(P,A)} \\
P & \End_{A}(P)
\end{pmatrix}
\]
with bimodule homomorphisms $\phi\colon P\otimes_A\Hom_{A}(P,A)\lxr \End_{A}(P)$, $p\otimes f\mapsto \phi(p\otimes f)(p')=pf(p')$ and \\ $\psi\colon \Hom_{A}(P,A)\otimes_{\End_{A}(P)}P\lxr A$, $f\otimes p\mapsto \psi(f\otimes p)=f(p)$. Hence any pair $(A,P_{A})$, where $A$ is a ring and $P_{A}$ is a right $A$-module induces a Morita ring. Also it is well known that if the $A$-module $P$ is progenerator, then the rings $A$ and $\End_{A}(P)$ are Morita equivalent. 
\end{exam}

The next example shows that Morita rings are special cases of semi-trivial extensions \cite{Palmer}.

\begin{exam}
Let $R$ be a ring, $M$ a $R$-$R$-bimodule and $\theta\colon M\otimes_R M\lxr M$ a $R$-$R$-bimodule homomorphism. Then on the Cartesian product $R\times M$ we define multiplication by $(r, m)\cdot (r', m')=(rr'+\theta(m,m'), rm'+mr')$ such that $\theta(m\otimes m')m''=m\theta(m'\otimes m'')$, for every $r, r'\in R$ and $m, m', m''\in M$. Then this data defines a ring structure with unit element on the Cartesian product set $R\times M$. This ring is denoted by $R\ltimes_{\theta}M$ and is called the \textsf{semi-trivial extension} of $R$ by $M$ and $\theta$. We refer to Palmer \cite{Palmer} for more details.

Let $\Lambda_{(\phi,\psi)}=\bigl(\begin{smallmatrix}
A & _AN_B \\
_BM_A & B
\end{smallmatrix}\bigr)$ be a Morita ring. Set $R=A\times B$ and consider the  $R$-$R$-bimodule $\widetilde{M}=N\times M$. Then the map $\theta\colon \widetilde{M}\otimes_R\widetilde{M}\lxr R$ is a $R$-$R$-bimodule homomorphism, where $\widetilde{M}\otimes_R\widetilde{M}=N{\otimes_B}M\times N{\otimes_B}M$, and we have the following ring isomorphism$\colon$ $R\ltimes_{\theta}M\simeq \Lambda_{(\phi,\psi)}$.
\end{exam}

The following is the example which motivated this work. 

\begin{exam}
Let $\Lambda$ be an Artin algebra and $U, V$ two finitely generated $\Lambda$-modules. Consider the endomorphism Artin algebra $\End_{\Lambda}(U\oplus V)$. Then we have the Artin algebra$\colon$
\[
\Lambda_{(\phi,\psi)}={\End}_{\Lambda}(U\oplus V)\simeq
         \begin{pmatrix}
           \End_{\Lambda}(U) & \Hom_{\Lambda}(U,V) \\
           \Hom_{\Lambda}(V,U) & \End_{\Lambda}(V) \\
         \end{pmatrix} 
\]
where the bimodule homomorphisms $\phi\colon \Hom_{\Lambda}(V,U)\otimes \Hom_{\Lambda}(U,V)\lxr \End_{\Lambda}(V) $ and $\psi\colon \Hom_{\Lambda}(U,V)\otimes \Hom_{\Lambda}(V,U)\lxr \End_{\Lambda}(U)$ are given by composition.
\end{exam}

\begin{exam}
Let $\Lambda$ be an Artin algebra and let $\{\epsilon_1, \ldots, \epsilon_n\}$ be a full set of primitive orthogonal idempotents. Suppose $n\geq 2$ and $1\leq r< n$. Set $e_1=\sum_{i=1}^{r}\epsilon_i$ and $e_2=\sum_{i=r+1}^n\epsilon_i$. Then
\[
\Lambda_{(\phi,\psi)}=
         \begin{pmatrix}
           e_1\Lambda e_1 & e_1\Lambda e_2 \\
           e_2\Lambda e_1 & e_2\Lambda e_2 \\
         \end{pmatrix} 
\]
is a Morita ring having the structure of an Artin algebra and the bimodule homomorphisms $\phi\colon e_2\Lambda e_1\otimes_{e_1\Lambda e_1} e_1\Lambda e_2\lxr e_2\Lambda e_2$ and $\psi\colon e_1\Lambda e_2\otimes_{e_2\Lambda e_2}e_2\Lambda e_1\lxr e_1\Lambda e_1$ are given by multiplication. 
\end{exam}

\begin{exam}
Let $A$ be a ring and let $I$ and $J$ be ideals in $A$.
Then we have the Morita ring 
\[
\Lambda_{(\phi,\psi)} = \begin{pmatrix}A&J\\I&A\end{pmatrix}
\]
where $\phi\colon J\otimes_{A}I\lxr A$ and $\psi\colon I\otimes_AJ\lxr A$ are multiplication maps. Interesting special cases are when $I=J$, $A$ is an Artin algebra and $I$ and $J$ are contained in the Jacobson
radical of $A$, or $I=J=A$.
\end{exam}

Later in the paper we will study the special case of Morita rings with $\phi=\psi=0$.  More generally, the case where  $\phi=\psi$ is of interest.  Towards this end, we have the following result and its corollary.

\begin{lem}  Suppose that 
\[
\Lambda_{(\phi,\psi)} = \begin{pmatrix}A&N\\A&A\end{pmatrix}
\]  
is a Morita ring and that $\alpha\colon N\lxr A\otimes_AN$ and  $\beta\colon N\lxr N\otimes_AA$ are given by $\alpha(n)=1\otimes n$ and $\beta(n)=
n\otimes 1$. Then $\phi\circ\alpha=\psi\circ\beta$.
\end{lem}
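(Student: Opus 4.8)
The plan is to deduce the identity from one of the two associativity relations that are part of the definition of a Morita context. In this context the bimodule $M$ equals $A$, with its canonical $A$-$A$-bimodule structure, and $B$ equals $A$; hence $\phi\colon M\otimes_A N\lxr B$ is literally a map $A\otimes_A N\lxr A$ and $\psi\colon N\otimes_B M\lxr A$ is literally a map $N\otimes_A A\lxr A$. Under these identifications $\alpha$ and $\beta$ are the canonical isomorphisms $N\cong A\otimes_A N$ and $N\cong N\otimes_A A$, so $\phi\circ\alpha$ and $\psi\circ\beta$ are both $A$-$A$-bimodule homomorphisms $N\lxr A$, and it is enough to compare their values on an arbitrary $n\in N$.

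The key step is to specialize the first associativity condition $\phi(m\otimes n)m'=m\,\psi(n\otimes m')$ (which holds for all $m,m'\in M$, $n\in N$) to $m=m'=1_A\in M=A$. Since the left $B$-action and the right $A$-action on $M=A$ are just left and right multiplication in $A$, the left-hand side becomes $\phi(1_A\otimes n)$ and the right-hand side becomes $\psi(n\otimes 1_A)$; thus $\phi(1_A\otimes n)=\psi(n\otimes 1_A)$ for every $n\in N$. Rewriting this as $\phi(\alpha(n))=\psi(\beta(n))$ gives $\phi\circ\alpha=\psi\circ\beta$, as required.

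I do not expect a real obstacle: the only point deserving attention is the bookkeeping that justifies substituting $m=m'=1_A$, namely keeping track of the identifications $M=A$ and $B=A$ with their standard bimodule structures so that the two actions appearing in the associativity identity collapse to multiplication in $A$. After that, the statement is an immediate instance of the defining relations of a Morita context (alternatively one could extract it from the companion relation $n\,\phi(m\otimes n')=\psi(n\otimes m)n'$, but the relation used here yields it in a single line).
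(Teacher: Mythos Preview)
Your proof is correct and is essentially identical to the paper's own argument: both specialize the associativity relation $\phi(m\otimes n)m'=m\,\psi(n\otimes m')$ to $m=m'=1_A$ (using $M=A$) to obtain $\phi(1\otimes n)=\psi(n\otimes 1)$, which is exactly $\phi\circ\alpha=\psi\circ\beta$.
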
 
\begin{proof} We note that $\phi(a\otimes n)a'=a\psi(n\otimes a')$ for all
$a,a'\in A$ and $n\in N$.  Taking $a=a'=1_A$, we see that $\phi(\alpha(n))=\phi(1\otimes n)=\psi(n\otimes 1)=\psi(\beta(n))$ and the result follows.
\end{proof}

As a consequence, we have the following result.

\begin{cor}\label{equal}  Suppose that 
\[
\Lambda_{(\phi,\psi)} = \begin{pmatrix} A & A \\ A & A \end{pmatrix}
\] 
is a Morita ring. Then $\phi=\psi$.
\end{cor}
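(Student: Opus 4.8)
The plan is to read the corollary off from the preceding Lemma. The Morita ring $\bigl(\begin{smallmatrix} A & A \\ A & A \end{smallmatrix}\bigr)$ is the special case $M = N = B = A$ of the Morita ring $\bigl(\begin{smallmatrix} A & N \\ A & A \end{smallmatrix}\bigr)$ treated in the Lemma, so I would first apply that Lemma to get $\phi \circ \alpha = \psi \circ \beta$, where $\alpha, \beta \colon A \lxr A \otimes_A A$ are the canonical maps $\alpha(a) = 1 \otimes a$ and $\beta(a) = a \otimes 1$.

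The extra input needed is the observation that, precisely because here $N = A$, the maps $\alpha$ and $\beta$ coincide: in $A \otimes_A A$ one has $1 \otimes a = (1 \cdot a) \otimes 1 = a \otimes 1$ for every $a \in A$, since elements of $A$ may be slid across the tensor symbol. Granting $\alpha = \beta$, the identity from the Lemma becomes $\phi \circ \alpha = \psi \circ \alpha$; and since $\alpha$ is an isomorphism (with inverse the multiplication map $a \otimes a' \mapsto a a'$), hence right-cancellable, I would conclude $\phi = \psi$.

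I do not expect any genuine obstacle here: the argument is purely formal once the Lemma is available, and the only step requiring (elementary) verification is the equality $\alpha = \beta$, which is exactly the point at which the hypothesis $M = N = A$ is used. Alternatively one can bypass $\alpha, \beta$ altogether and argue directly from the associativity relation $\phi(a \otimes n) a' = a\, \psi(n \otimes a')$ of the Morita context by specializing $a = a' = 1_A$: this gives $\phi(1 \otimes n) = \psi(n \otimes 1)$ for all $n \in A$, and combining this with $1 \otimes n = n \otimes 1$ in $A \otimes_A A$ together with bilinearity shows that $\phi$ and $\psi$ agree on all of $A \otimes_A A$.
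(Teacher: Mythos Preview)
Your proposal is correct and follows essentially the same approach as the paper. The paper's proof applies the Lemma with $N=A$ and then argues that since $A\otimes_A A$ is generated by $1\otimes 1$ it suffices to check $\phi(1\otimes 1)=\psi(1\otimes 1)$, which is exactly your observation that $\alpha$ is an isomorphism (equivalently, that $\alpha=\beta$) phrased slightly differently.
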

\begin{proof}
Noting that $A\otimes_AA$ is generated by $1\otimes 1$,  we need only to
show that $\phi(1\otimes 1)=\psi(1\otimes 1)$.
But, keeping the notation of the Lemma $2.12$, with $N=A$,
$\phi(1\otimes 1)=\phi(\alpha(1))=\psi(\beta(1))=\psi(1\otimes 1)$ and
we are done.
\end{proof}

\section{Projective, Injective and Simple Modules} 
In this section we describe the projective, injective
and simple modules over $\Lambda_{(\phi,\psi)}$ as an Artin algebra and we examine also when $\Lambda_{(\phi,\psi)}$ is selfinjective. We work in the setting of finitely generated modules over the Artin algebra $\Lambda_{(\phi,\psi)}$ although there is a description for arbitrary modules over Morita rings. We refer to \cite{Psaroud} for more details.

\subsection{Projective and Injective $\Lambda_{(\phi,\psi)}$-modules}
We start with the next result which gives a description of the indecomposable projective $\Lambda_{(\phi,\psi)}$-modules.

\begin{prop}\label{proj}
Let $\Lambda_{(\phi,\psi)}$ be a Morita ring regarded as an Artin algebra.
Then the indecomposable projective $\Lambda_{(\phi,\psi)}$-modules
are objects of the form$\colon$
\[
\left\{
  \begin{array}{lll}
   \mt_A(P)=(P, {M\otimes_{A}P}, \iden_{M\otimes_{A}P}, \Psi_P)  & \hbox{} \\
           & \hbox{} \\
   \mt_B(Q)=({N\otimes_BQ}, Q, \Phi_{Q}, \iden_{N\otimes_{B}Q})  & \hbox{}
  \end{array}
\right.
\]
where $P$ is an indecomposable projective $A$-module and
$Q$ is an indecomposable projective $B$-module.
\begin{proof}
If $P=\bigl(\begin{smallmatrix}
A & 0 \\
M & 0
\end{smallmatrix}\bigr)$ and $Q=\bigl(\begin{smallmatrix}
0 & N \\
0 & B
\end{smallmatrix}\bigr)$ then $\Lambda_{(\phi,\psi)}\simeq P\oplus Q$ as left
$\Lambda_{(\phi,\psi)}$-modules. From Proposition \ref{tuples} it follows
that the object of $\M(\Lambda)$ which corresponds to the
$\Lambda_{(\phi,\psi)}$-module $P$ is $(A,{M\otimes_{A}A},
\iden_{M\otimes A},\Psi_{A})$. Also the tuple $(N\otimes B,B,\Phi_{B},\iden_{N\otimes B})$ is the object of $\M(\Lambda)$ corresponding to $Q$. Let $A=P_1\oplus \cdots
\oplus P_n$ be the decomposition of $A$ into a direct sum of
indecomposable projective $A$-modules. Then we have the following decomposition of $(A,{M\otimes_{A}A},\iden_{M\otimes A},\Psi_{A})\colon$
\[
(A,{M\otimes_{A}A},
\iden_{M\otimes A},\Psi_{A})\simeq
(P_1,{M\otimes_{A}P_1}, \iden_{M\otimes_A P_1},\Psi_{P_1}) \oplus \cdots \oplus (P_n,{M\otimes_{A}P_n},
\iden_{M\otimes_A P_n},\Psi_{P_n})
\]
and for every $1\leq i\leq n$ we have the isomorphism ${\End}_{\Lambda_{(\phi,\psi)}}(P_i,{M\otimes_{A}P_i},
\iden_{M\otimes_{A}P_i},\Psi_{P_i})\simeq
{\End}_{A}(P_i)$. Since the algebra ${\End}_{A}(P_i)$ is local it follows that $(P_i,{M\otimes_{A}P_i}, \iden_{M\otimes_{A}P_i},\Psi_{P_i})$ is an
indecomposable projective $\Lambda_{(\phi,\psi)}$-module, for all $1\leq i\leq n$. Similarly if $B=Q_1\oplus \cdots \oplus Q_m$ is the decomposition
of $B$ into the direct sum of indecomposable projective $B$-modules,
then we infer that $({N\otimes_{B}Q_i},Q_i,\Phi_{Q_i},\iden_{N\otimes_{B}Q_i})$ is an indecomposable projective $\Lambda_{(\phi,\psi)}$-module for all $1\leq i\leq m$. In this way we get all
indecomposable projective $\Lambda_{(\phi,\psi)}$-modules up to isomorphism.
\end{proof}
\end{prop}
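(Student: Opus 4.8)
The plan is to read off the indecomposable projectives from the decomposition of the regular module ${}_{\Lambda_{(\phi,\psi)}}\Lambda_{(\phi,\psi)}$, transported through the equivalence $\Mod{\Lambda_{(\phi,\psi)}}\simeq\M(\Lambda)$ of Proposition~\ref{tuples}. Write $e_1,e_2$ for the diagonal idempotents as in Proposition~\ref{recollement}(iv), so that ${}_\Lambda\Lambda=\Lambda e_1\oplus\Lambda e_2$ with $\Lambda e_1=\bigl(\begin{smallmatrix}A&0\\ M&0\end{smallmatrix}\bigr)$ and $\Lambda e_2=\bigl(\begin{smallmatrix}0&N\\ 0&B\end{smallmatrix}\bigr)$. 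First I would check, by unwinding the formula for the $\Lambda_{(\phi,\psi)}$-action on a tuple, that under this dictionary $\Lambda e_1$ corresponds to $\mt_A(A)=(A,M\otimes_A A,\iden_{M\otimes_A A},\Psi_A)$ and $\Lambda e_2$ to $\mt_B(B)=(N\otimes_B B,B,\Phi_B,\iden_{N\otimes_B B})$: one computes $e_1(\Lambda e_1)=A$ and $e_2(\Lambda e_1)=M$, and identifies the two structure maps $f,g$ with the canonical isomorphism $M\otimes_A A\xrightarrow{\simeq}M$ and with $\psi$, respectively (symmetrically for $\Lambda e_2$). Hence ${}_\Lambda\Lambda\cong\mt_A(A)\oplus\mt_B(B)$.

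Next I would establish that $\mt_A$ and $\mt_B$ send projective modules to projective modules: by Proposition~\ref{recollement}(ii)--(iii) the pair $(\mt_A,\mU_A)$ is adjoint with $\mU_A$ exact, so $\mt_A$ is left adjoint to an exact functor and therefore preserves projectivity (the functor $\Hom_{\Lambda_{(\phi,\psi)}}(\mt_A(P),-)\cong\Hom_A(P,\mU_A(-))$ is exact when $P$ is projective), and likewise for $\mt_B$. Decomposing $A=P_1\oplus\cdots\oplus P_n$ and $B=Q_1\oplus\cdots\oplus Q_m$ into indecomposable projective modules over the Artin algebras $A$ and $B$, additivity of $\mt_A,\mt_B$ gives
\[
{}_\Lambda\Lambda\ \cong\ \mt_A(P_1)\oplus\cdots\oplus\mt_A(P_n)\ \oplus\ \mt_B(Q_1)\oplus\cdots\oplus\mt_B(Q_m),
\]
a decomposition into projective $\Lambda_{(\phi,\psi)}$-modules. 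To see that each summand is indecomposable I would invoke full faithfulness of $\mt_A$ and $\mt_B$ (Proposition~\ref{recollement}(i)): this yields ring isomorphisms $\End_{\Lambda_{(\phi,\psi)}}(\mt_A(P_i))\cong\End_A(P_i)$ and $\End_{\Lambda_{(\phi,\psi)}}(\mt_B(Q_j))\cong\End_B(Q_j)$, whose right-hand sides are local because $P_i,Q_j$ are indecomposable projective over Artin algebras; a module with local endomorphism ring is indecomposable. Finally, since $\Lambda_{(\phi,\psi)}$ is an Artin algebra the category of finitely generated modules is Krull--Schmidt and every finitely generated projective is a summand of some $\Lambda^{k}$, so every indecomposable projective is isomorphic to one of the $\mt_A(P_i)$ or $\mt_B(Q_j)$; conversely each of these occurs, which (letting $P$ and $Q$ range over all indecomposable projectives of $A$ and $B$) is exactly the asserted description.

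The main obstacle is the bookkeeping in the very first step: one has to trace through the explicit functor $\mf'$ and the tuple/matrix correspondence of Proposition~\ref{tuples} to verify that the structure maps attached to $\Lambda e_1$ are precisely $\iden_{M\otimes_A A}$ and $\Psi_A$ (and $\Phi_B,\iden_{N\otimes_B B}$ for $\Lambda e_2$); once this identification is in place the rest is formal. A minor point to keep in mind is that the two families $\{\mt_A(P)\}$ and $\{\mt_B(Q)\}$ need not be disjoint, but since the statement only asserts that they exhaust the indecomposable projectives, no separate argument is required there.
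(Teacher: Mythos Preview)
Your proposal is correct and follows essentially the same route as the paper: decompose ${}_\Lambda\Lambda$ via the idempotents $e_1,e_2$, identify the two summands with $\mt_A(A)$ and $\mt_B(B)$ through the equivalence of Proposition~\ref{tuples}, refine via decompositions of $A$ and $B$ into indecomposable projectives, and conclude indecomposability from the endomorphism ring isomorphisms $\End_{\Lambda_{(\phi,\psi)}}(\mt_A(P_i))\cong\End_A(P_i)$ being local. Your extra step showing that $\mt_A,\mt_B$ preserve projectives via adjunction is harmless but not needed here, since the summands arise directly from the regular module; and your explicit appeal to Krull--Schmidt and to full faithfulness (Proposition~\ref{recollement}(i)) just makes explicit what the paper uses tacitly.
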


Our aim now is to describe the injective $\Lambda_{(\phi,\psi)}$-modules. In order to do this we describe how the duality acts on the objects of $\widetilde{\M}(\Lambda)$ using the equivalence of categories$\colon$ $\widetilde{\M}(\Lambda) \stackrel{\simeq}{\lxr} \M(\Lambda) \stackrel{\simeq}{\lxr}\smod{\Lambda_{(\phi,\psi)}} 
$. First we identify the opposite algebra
$\Lambda_{(\phi,\psi)}^{\op}$ with the Morita ring
\[
\Lambda_{(\phi,\psi)}^{\op}\simeq 
         \begin{pmatrix}
           B^{\op} & _{B^{op}}N_{A^{\op}} \\
           _{A^{\op}}M_{B^{op}} & A^{\op} \\
         \end{pmatrix}, \ 
         \begin{pmatrix}
           a & n \\
           m & b \\
         \end{pmatrix}^{\op}\mapsto 
         \begin{pmatrix}
           b^{\op} & n \\
           m & a^{\op} \\
         \end{pmatrix}
\]
Let $(X,Y,f,g)\in \M(\Lambda)$. Then the object
$(X,Y,\pi(f),\rho(g))\in \widetilde{\M}(\Lambda)$ and applying the
duality we obtain the morphisms $\du (\pi(f))\in \Hom_{A^{\op}}(\du\Hom_{B}(M,Y),\du X)$ and $\du (\rho(g))\in \Hom_{B^{\op}}(\du\Hom_{A}(N,X),\du Y)$. Let $0 \lxr Y \lxr I_0 \lxr I_1 $ be an injective
  coresolution of $Y$. Since the functors $\du \Hom_{B}(M,-), \, M\otimes_{B^{\op}}\du(-)\colon \smod{B}\lxr
  \smod{A^{\op}}$ are right exact we have the following exact commutative diagram$\colon$
\[
\xymatrix{
  \du \Hom_{B}(M,I_1) \ar[d]_{\simeq} \ar[r]^{} & \du \Hom_{B}(M,I_0) \ar[d]_{\simeq} \ar[r]^{} & \du \Hom_{B}(M,Y) \ar@{-->}[d]_{\sigma}
  \ar[r]^{} & 0  \\
  M\otimes_{B^{\op}}\du I_1 \ar[r]^{} &  M\otimes_{B^{\op}}\du I_0 \ar[r]^{} &  M\otimes_{B^{\op}}\du Y \ar[r]^{} & 0
  }
\]
Hence the morphism $\sigma\colon M\otimes_{B^{\op}}\du Y\stackrel{\simeq}{\lxr} \du\Hom_{B}(M,Y)$ is an $A^{\op}$-isomorphism which is functorial in
$Y$. Similarly we obtain a $B^{\op}$-isomorphism
$\tau\colon N\otimes_{A^{\op}}\du X\stackrel{\simeq}{\lxr} \du \Hom_{A}(N,X)$ which is
functorial in $X$. Then we have the object $(\du Y,\du X,\sigma\circ\du(
\pi(f)),\tau\circ\du (\rho(g)))\in \M(\Lambda^{\op}) \
$ where the morphisms are the following compositions$\colon$
\[
\xymatrix@C=0.5cm{
  M\otimes_{B^{\op}}\du Y \ar[rr]_{\simeq \ \ }^{\sigma \ \ } && \du \Hom_{B}(M,Y) \ar[rr]^{ \ \  \  \ \ \ \ \du(
\pi(f))} && \du X } \ \ \ \ \xymatrix@C=0.5cm{
  N\otimes_{A^{\op}}\du X \ar[rr]_{\simeq \ \ }^{\tau \ \ } && \du \Hom_{A}(N,X) \ar[rr]^{ \ \  \  \ \ \ \ (\du
\rho(g))} && \du Y } 
\]
If $(a,b)\colon (X,Y,f,g)\lxr (X',Y',f',g')$ is a
morphism in $\M(\Lambda)$ then it is easy to check that
\[
(\du(b),\du(a))\colon (\du Y',\du X',\sigma'\circ\du(
\pi(f')),\tau'\circ\du (\rho(g')))\lxr  (\du Y,\du
X,\sigma\circ\du (\pi(f)),\tau\circ\du (\rho(g)))
\]
is a morphism in $\M(\Lambda^{\op})$ and in this way we
obtain a contravariant functor $\du\colon \M(\Lambda)\lxr
\M(\Lambda^{\op})$. Then from the following commutative diagram$\colon$
\[
\xymatrix{
  \M(\Lambda) \ar[d]_{\du} \ar[r]^{\simeq \ \ \ } & \smod{\Lambda_{(\phi,\psi)}} \ar[d]^{\du} \\
 \M(\Lambda^{\op}) \ar[r]^{\simeq \ \ \ } &  \smod{\Lambda_{(\phi,\psi)}^{\op}}  }
\]
where $\du\colon \smod{\Lambda_{(\phi,\psi)}}\lxr
\smod{\Lambda_{(\phi,\psi)}^{op}}$ is the usual duality, we infer
that the functor $\du\colon \M(\Lambda)\lxr \M(\Lambda^{\op})$ is a
duality. We are now ready to describe the injective
$\Lambda_{(\phi,\psi)}$-modules.

\begin{prop}\label{inj}
Let $\Lambda_{(\phi,\psi)}$ be a Morita ring regarded as an Artin algebra.
Then the indecomposable injective $\Lambda_{(\phi,\psi)}$-modules
are of the form$\colon$
\[
\left\{
  \begin{array}{ll}
   \mh_A(I)=(I,\Hom_{A}(N,I),\delta'_{M\otimes I}\circ \Hom_A(N,\Psi_I),\epsilon'_I)  & \hbox{} \\
     & \hbox{} \\
   \mh_B(J)=(\Hom_{B}(M,J),J,\epsilon_{J},\delta_{N\otimes J}\circ \Hom_B(M,\Phi_J)) & \hbox{}
  \end{array}
\right.
\]
where $I$ is an indecomposable injective $A$-module and $J$ is an indecomposable injective $B$-module.
\begin{proof}
This follows from the description of the duality $\du\colon \M(\Lambda)\lxr
\M(\Lambda^{\op})$ and Proposition \ref{proj}.
\end{proof}
\end{prop}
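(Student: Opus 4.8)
The plan is to exploit the duality $\du\colon \M(\Lambda)\lxr \M(\Lambda^{\op})$ constructed just above the statement, together with the already-established description of the indecomposable projective $\Lambda_{(\phi,\psi)}$-modules in Proposition~\ref{proj}. The key observation is that, under the duality $\smod{\Lambda_{(\phi,\psi)}}\lxr \smod{\Lambda_{(\phi,\psi)}^{\op}}$, the indecomposable injective $\Lambda_{(\phi,\psi)}$-modules are exactly the images of the indecomposable projective $\Lambda_{(\phi,\psi)}^{\op}$-modules. So the strategy is: (1) apply Proposition~\ref{proj} to the Morita ring $\Lambda_{(\phi,\psi)}^{\op}=\bigl(\begin{smallmatrix} B^{\op} & N \\ M & A^{\op} \end{smallmatrix}\bigr)$ to list its indecomposable projectives as objects of $\M(\Lambda^{\op})$; (2) transport these along the duality functor $\du\colon \M(\Lambda^{\op})\lxr \M(\Lambda)$ described in the excerpt; and (3) identify the resulting tuples with $\mh_A(I)$ and $\mh_B(J)$ for $I$ an indecomposable injective $A$-module and $J$ an indecomposable injective $B$-module.

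First I would write down, via Proposition~\ref{proj} applied to $\Lambda_{(\phi,\psi)}^{\op}$, that the indecomposable projective $\Lambda^{\op}$-modules are $\mt_{B^{\op}}(Q^*)=(Q^*, M\otimes_{B^{\op}}Q^*,\iden,\Psi_{Q^*})$ for $Q^*$ indecomposable projective over $B^{\op}$, and $\mt_{A^{\op}}(P^*)=(N\otimes_{A^{\op}}P^*, P^*,\Phi_{P^*},\iden)$ for $P^*$ indecomposable projective over $A^{\op}$ (here one must be careful that the roles of $A$ and $B$, and of $M$ and $N$, are swapped in $\Lambda^{\op}$). Then I would apply $\du\colon \M(\Lambda^{\op})\lxr \M(\Lambda)$; since the duality takes the first coordinate to the dual of the second coordinate and vice versa, and since $\du$ of an indecomposable projective $B^{\op}$-module is an indecomposable injective $B$-module $J$ (and likewise $\du P^* = I$ an indecomposable injective $A$-module), the object $\du\bigl(\mt_{B^{\op}}(Q^*)\bigr)$ will have first coordinate $\du(M\otimes_{B^{\op}}Q^*)\cong \Hom_B(M,J)$ and second coordinate $\du(Q^*)=J$. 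Unwinding the structure maps using the functorial isomorphisms $\sigma$ and $\tau$ from the excerpt, together with the naturality of the counit/unit $\epsilon,\delta$ of the adjunction $(M\otimes_A-,\Hom_B(M,-))$, I would check that the structure maps become exactly $\epsilon_J$ and $\delta_{N\otimes J}\circ \Hom_B(M,\Phi_J)$, so that $\du\bigl(\mt_{B^{\op}}(Q^*)\bigr)\cong \mh_B(J)$. The computation for $\du\bigl(\mt_{A^{\op}}(P^*)\bigr)\cong \mh_A(I)$ is symmetric.

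The main obstacle I anticipate is purely bookkeeping: matching up the two structure morphisms in each tuple correctly after dualizing. Concretely, one must verify commutativity of the relevant diagrams showing that $\du(\Psi_{Q^*})$, pre- and post-composed with $\sigma$ and $\tau$, coincides with the unit/counit expressions defining $\mh_B$; this requires carefully tracking how $\du$ interacts with the adjunction isomorphisms $\pi,\rho$ and with the natural isomorphisms $\sigma\colon M\otimes_{B^{\op}}\du Y\xrightarrow{\simeq}\du\Hom_B(M,Y)$ and $\tau\colon N\otimes_{A^{\op}}\du X\xrightarrow{\simeq}\du\Hom_A(N,X)$. Once these identifications are in place, it remains only to note that $\du$ is a duality (hence sends a complete irredundant list of indecomposable projectives to a complete irredundant list of indecomposable injectives), and that $I\mapsto \mh_A(I)$, $J\mapsto \mh_B(J)$ exhaust the indecomposable injectives as $I$ ranges over indecomposable injective $A$-modules and $J$ over indecomposable injective $B$-modules, because every indecomposable projective over $\Lambda^{\op}$ arises from an indecomposable projective over $A^{\op}$ or $B^{\op}$, equivalently an indecomposable injective over $A$ or $B$.
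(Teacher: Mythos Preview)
Your proposal is correct and follows essentially the same approach as the paper: the paper's proof consists of the single sentence ``This follows from the description of the duality $\du\colon \M(\Lambda)\lxr \M(\Lambda^{\op})$ and Proposition~\ref{proj},'' and you have simply spelled out in detail the bookkeeping that this sentence implicitly requires.
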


\subsection{Simple $\Lambda_{(\phi,\psi)}$-modules}
In this subsection we determine the simple $\Lambda_{(\phi,\psi)}$-modules. Note that the description of the simple $\Lambda_{(\phi,\psi)}$-modules follows from the recollement diagram of Proposition \ref{recollement} and is due to Kuhn, see \cite{Kuhn} for a general result about simple objects in recollements of abelian categories. For completeness we sketch the proof in our case. 

Let $X$ be an $A$-module. Then we have the following commutative diagram$\colon$
\[
\xymatrix{
  \mt_{A}(X) \ar@{->>}[dr]^{} \ar[rr]^{(\iden_{X},\mu_{X})}
                &  &    \mh_{A}(X)     \\
                & \mc_{A}(X) \ \ \ \ar@{>->}[ur]                 }
\]
where $\mc_{A}(X)=\Image{(\iden_{X},\mu_{X})}=(X,\Image{\mu_X},\kappa,\lambda)$ and
$\mu_X=\delta'_{M\otimes X}\circ \Hom_A(N,\Psi_X)$. Then the assigment $X\mapsto \Image{(\iden_{X},\mu_{X})}$ defines a functor $\mc_{A}\colon \smod{A}\lxr \smod{\Lambda_{(\phi,\psi)}}$ on objects and given an $A$-morphism $a\colon X\lxr X'$ then $\mc_{A}(a)=(a,\theta)$, where $\theta\colon \Image{\mu_{X}}\lxr \Image{\mu_{X'}}$ is the unique
morphism which makes the following diagram commutative$\colon$
\[
{\xymatrixrowsep{0.6pc}\xymatrixcolsep{0.6pc}
\xymatrix{ M\otimes_AX \ar[rrr]^{\mu_X} \ar@{->>}[rrd]_{} \ar[dddd]_{M\otimes a} &&& \Hom_A(N,X) \ar[dddd]^{\Hom_A(N,a)}   \\
           && \Image{\mu_X} \ar[dddd]^{\theta} \ar@{>->}[ur]  \\
            &&&  \\
            &&  \\
           M\otimes_A X'  \ar'[rr]^{ \ \ \ \ \ \ \ \ \ \mu_{X'}}[rrr] \ar@{->>}[rrd]_{} &&& \Hom_A(N,X')  \\
           && \Image{\mu_{X'}} \ar@{>->}[ur]  }}
\]
From the above diagram it follows that the functor $\mc_{A}\colon \smod{A}\lxr \smod{\Lambda_{(\phi,\psi)}}$ preserves epimorphisms and monomorphisms.  

We have the following result which shows that the functor $\mc_{A}$ lifts simple
modules.

\begin{lem}\label{lem1simples}
Let $S$ be a simple $A$-module. Then $\mc_{A}(S)$ is a simple $\Lambda_{(\phi,\psi)}$-module.
\begin{proof}
Consider the following exact commutative diagram$\colon$
\[
\xymatrix@C=0.5cm{
     & & & & \mt_{A}(S) \ar[rr]^{} \ar@{->>}[d]_{}  && \mh_{A}(S)  & &
          \\
  0 \ar[rr] && (X,Y,f,g) \ar[rr]^{} && \mc_{A}(S) \ar[rr]^{} \ar@{>->}[urr]  && (X',Y',f',g') \ar[rr] && 0
  }
\]
We claim that either $(X,Y,f,g)=0$ or $(X',Y',f',g')=0$. If we apply the
exact functor $\mU_{A}$ we get the short exact sequence
$0 \lxr X \lxr S \lxr X' \lxr 0$ and so $X=0$ or $X'=0$ since $S$ is simple. Hence $(X,Y,f,g)\in \Ker{\mU_A}$ or $(X',Y',f',g')\in \Ker{\mU_A}$. Let $(0,Y,0,0)$ be the submodule of $\mc_{A}(S)$. Then the object $(0,Y,0,0)$ is also a submodule of
$\mh_{A}(S)$ and $\Hom_{\Lambda_{(\phi,\psi)}}((0,Y,0,0),\mh_{A}(S))\simeq
\Hom_{A}(\mU_{A}(0,Y,0,0),S)=0$. This implies that $\mc_{A}(S)$ has no nonzero submodules. Similarly if $(0,Y',0,0)$ is the quotient module of $\mc_{A}(S)$ then $(0,Y',0,0)$ is also a quotient module of $\mt_{A}(S)$. But $\Hom_{\Lambda_{(\phi,\psi)}}{(\mt_{A}(S),(0,Y',0,0))}\simeq
\Hom_A{(S,\mU_{A}(0,Y',0,0))}=0$ and then $\mc_{A}(S)$ has no nonzero
quotient modules. We infer that $(X,Y,f,g)=0$ or
$(X',Y',f',g')=0$ and therefore the $\Lambda_{(\phi,\psi)}$-module $\mc_{A}(S)$ is simple.
\end{proof}
\end{lem}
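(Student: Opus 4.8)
The plan is to run everything through the recollement of Proposition~\ref{recollement}, using the image factorization $\mt_A(X)\epic\mc_A(X)\monic\mh_A(X)$ of the canonical morphism $(\iden_X,\mu_X)$ recorded in the triangle displayed just above the Lemma, together with the two adjunctions $(\mt_A,\mU_A)$ and $(\mU_A,\mh_A)$. First I would observe that $\mc_A(S)\neq 0$: by construction the first component of $\mc_A(X)$ is $X$, so $\mU_A\mc_A(S)=S\neq 0$. It therefore suffices to show that $\mc_A(S)$ has no proper nonzero submodule.

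Next, let $U\monic\mc_A(S)$ be a submodule. Applying the exact functor $\mU_A$ we obtain a submodule $\mU_A(U)$ of $\mU_A\mc_A(S)=S$, so simplicity of $S$ forces $\mU_A(U)=0$ or $\mU_A(U)=S$. In the first case $U$ lies in $\Ker\mU_A$, which by Proposition~\ref{recollement}(iv) consists of the objects $(0,Y,0,0)$; write $U=(0,Y,0,0)$. Composing $U\monic\mc_A(S)$ with the monomorphism $\mc_A(S)\monic\mh_A(S)$ gives a monomorphism $(0,Y,0,0)\monic\mh_A(S)$, but the adjunction $(\mU_A,\mh_A)$ identifies $\Hom_{\Lambda_{(\phi,\psi)}}((0,Y,0,0),\mh_A(S))$ with $\Hom_A(\mU_A(0,Y,0,0),S)=\Hom_A(0,S)=0$; hence $U=0$.

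Thus a nonzero submodule $U$ must satisfy $\mU_A(U)=S$, and then $\mU_A(\mc_A(S)/U)=S/S=0$, so the quotient $\mc_A(S)/U$ again lies in $\Ker\mU_A$, say $\mc_A(S)/U=(0,Y',0,0)$. Composing the epimorphism $\mt_A(S)\epic\mc_A(S)$ with the projection $\mc_A(S)\epic(0,Y',0,0)$ gives an epimorphism $\mt_A(S)\epic(0,Y',0,0)$, but the adjunction $(\mt_A,\mU_A)$ identifies $\Hom_{\Lambda_{(\phi,\psi)}}(\mt_A(S),(0,Y',0,0))$ with $\Hom_A(S,\mU_A(0,Y',0,0))=\Hom_A(S,0)=0$; hence $(0,Y',0,0)=0$ and $U=\mc_A(S)$. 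This shows $\mc_A(S)$ is simple.

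I do not expect a genuine obstacle; the only point deserving a word of care is that the two ``boundary'' objects $(0,Y,0,0)$ and $(0,Y',0,0)$ arising in the argument are legitimate objects of $\M(\Lambda)$, equivalently of $\Ker\mU_A\simeq\smod(B/\Image\phi)$. This is automatic, since each is respectively a subobject and a quotient of an honest $\Lambda_{(\phi,\psi)}$-module and hence inherits the compatibility constraints defining $\M(\Lambda)$, after which the adjunction isomorphisms of Proposition~\ref{recollement}(ii) apply verbatim. The one thing to keep firmly in mind throughout is the factorization $\mt_A(S)\epic\mc_A(S)\monic\mh_A(S)$, so that in the two cases the correct adjunction is invoked.
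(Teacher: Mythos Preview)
Your proof is correct and follows essentially the same route as the paper: both arguments use the factorization $\mt_A(S)\epic\mc_A(S)\monic\mh_A(S)$, apply the exact functor $\mU_A$ to force any sub- or quotient object into $\Ker\mU_A$, and then kill it via the adjunctions $(\mt_A,\mU_A)$ and $(\mU_A,\mh_A)$. Your version is in fact slightly more careful in explicitly noting $\mc_A(S)\neq 0$ at the outset, which the paper leaves implicit.
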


\begin{lem}\label{lem2simples}
Let $(X,Y,f,g)$ be a simple $\Lambda_{(\phi,\psi)}$-module such that 
$\mU_{A}(X,Y,f,g)=X\neq 0$. Then $X$ is a simple $A$-module and $\mc_{A}\mU_{A}(X,Y,f,g)\simeq (X,Y,f,g)$.
\begin{proof}
From the following commutative diagram$\colon$
\[
\xymatrix{
  \mt_{A}(X) \ar@{->>}[d]_{} \ar[r]^{(\iden_{X},f) \ } \ar[dr]^{} & (X,Y,f,g) \ar[d]^{(\iden_{X},\delta'_Y\circ \Hom_A(N,g))} \\
  \mc_{A}\mU_{A}(X,Y,f,g) \ \ar@{>->}[r]^{} & \mh_{A}(X)   }
\]
we deduce that the map
$\mc_{A}(X)\lxr \Image{(\iden_{X},\delta'_Y\circ
\Hom_A(N,g))}$ is a monomorphism. But then since $(X,Y,f,g)$ is simple and $\mc_{A}(X)\neq 0$ it follows that
$\mc_{A}\mU_{A}(X,Y,f,g)\simeq (X,Y,f,g)$. Let $X^{\prime}$ be a submodule of $X$ and let $i\colon X'\lxr X$ be the inclusion map. Then since the functor $\mc_A$ preserves monomorphisms we have the monomorphism $\mc_{A}(i)\colon \mc_{A}(X') \lxr \mc_{A}\mU_{A}(X,Y,f,g)$ and therefore $\mc_{A}(X')=0$ or $\mc_{A}(X')\simeq (X,Y,f,g)$. Thus if we apply the functor $\mU_{A}$ we get $X'=0$ or $X'\simeq X$
and therefore we conclude that the $A$-module $\mU_{A}(X,Y,f,g)=X$ is simple.
\end{proof}
\end{lem}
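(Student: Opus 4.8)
The plan is to produce a single morphism $\mt_{A}(X)\lxr\mh_{A}(X)$ that factors simultaneously through $(X,Y,f,g)$ and through $\mc_{A}\mU_{A}(X,Y,f,g)=\mc_{A}(X)$, and then to use the simplicity of $(X,Y,f,g)$ to force these two intermediate objects to coincide. The counit of the adjunction $(\mt_{A},\mU_{A})$ provides a morphism $a=(\iden_{X},f)\colon \mt_{A}(X)\lxr (X,Y,f,g)$, and the unit of $(\mU_{A},\mh_{A})$ provides a morphism $b=(\iden_{X},\delta'_{Y}\circ\Hom_{A}(N,g))\colon (X,Y,f,g)\lxr\mh_{A}(X)$. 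The first step --- and the one requiring care --- is to check that the composite of $a$ followed by $b$ is exactly the canonical morphism $(\iden_{X},\mu_{X})\colon \mt_{A}(X)\lxr\mh_{A}(X)$ whose image defines $\mc_{A}(X)$, i.e.\ that the displayed square commutes. On the $A$-components this is clear. On the other components one chases the diagram: naturality of the unit $\delta'$ along the $B$-morphism $f\colon M\otimes_{A}X\lxr Y$ rewrites $f\circ\delta'_{Y}\circ\Hom_{A}(N,g)$ as $\delta'_{M\otimes X}\circ\Hom_{A}(N,(N\otimes f)\circ g)$, while the first compatibility diagram satisfied by the $\Lambda_{(\phi,\psi)}$-module $(X,Y,f,g)$ is precisely the identity $(N\otimes f)\circ g=\Psi_{X}$; hence the composite equals $\delta'_{M\otimes X}\circ\Hom_{A}(N,\Psi_{X})=\mu_{X}$, as needed.

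Granting the factorization, I would deduce the isomorphism as follows. Since $\mU_{A}(a)=\iden_{X}$ and $X\neq 0$, the morphism $a$ is nonzero, so its image is a nonzero submodule of the simple module $(X,Y,f,g)$ and therefore $a$ is an epimorphism. Consequently $\mc_{A}(X)=\Image(\iden_{X},\mu_{X})=\Image(b)$, a submodule of $\mh_{A}(X)$; moreover it is nonzero, since $\mU_{A}\mc_{A}(X)=X\neq 0$. But $\Image(b)$ is also a quotient of the simple module $(X,Y,f,g)$, and a nonzero quotient of a simple module is isomorphic to it, so $\mc_{A}\mU_{A}(X,Y,f,g)=\mc_{A}(X)\simeq (X,Y,f,g)$. (Alternatively, as in the sketch, one reads off from the displayed diagram the monomorphism $\mc_{A}(X)\hookrightarrow\Image(b)$ and concludes in the same way.)

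Finally I would deduce that $X$ is simple over $A$. Let $X'$ be a submodule of $X$ with inclusion $i\colon X'\lxr X$. Since $\mc_{A}$ preserves monomorphisms, $\mc_{A}(i)$ realizes $\mc_{A}(X')$ as a submodule of $\mc_{A}(X)\simeq (X,Y,f,g)$; as the latter is simple, $\mc_{A}(X')$ is either $0$ or all of $\mc_{A}(X)$, and $\mc_{A}(i)$ is either zero or an isomorphism. Applying the exact functor $\mU_{A}$, and using $\mU_{A}\mc_{A}(X')=X'$ and $\mU_{A}\mc_{A}(i)=i$, we obtain $X'=0$ or $X'=X$, so $X$ is simple. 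The main obstacle is the commutativity verification of the first paragraph: correctly threading the adjunction units and the module axioms through the diagrammatic composition convention. Once that identity is in hand the remainder is routine bookkeeping in an abelian category, the only point to watch being that one must confirm $\mc_{A}\mU_{A}(X,Y,f,g)\neq 0$ before invoking the simplicity of $(X,Y,f,g)$.
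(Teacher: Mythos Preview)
Your proof is correct and follows essentially the same route as the paper's: both use the factorization of the canonical map $\mt_{A}(X)\to\mh_{A}(X)$ through $(X,Y,f,g)$ via the counit and unit, then exploit simplicity of $(X,Y,f,g)$ to identify it with $\mc_{A}(X)$, and finish with the same $\mc_{A}$-preserves-monomorphisms argument for simplicity of $X$. You supply more detail than the paper does---in particular the explicit naturality-plus-module-axiom verification that the square commutes, and the observation that $a$ is epi because its $\mU_{A}$-component is the identity---but the underlying argument is the same.
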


The following result describes the simple $\Lambda_{(\phi,\psi)}$-modules in terms of simple $A$-modules, simple $B$-modules, simple ${B/\Image{\phi}}$-modules and simple ${A/\Image{\psi}}$-modules.

\begin{prop}\label{simples}
There are the following bijections$\colon$
\[
\xymatrix@C=0.4cm{
  \{\text{simple} \ {B/\Image{\phi}} \text{-modules} \} \ar@<-.7ex>[rr]_-{\mz_B} && \{\text{simple} \ \Lambda_{(\phi,\psi)} \text{-modules} \ \text{such that} \ \mU_{A}(X,Y,f,g)= 0 \} \ar@<-.7ex>[ll]_-{\mU_B}
  }
\]
\[
 \ \ \ \ \ \ \ \  \xymatrix@C=0.4cm{
  \{\text{simple} \ A \text{-modules} \} \ar@<-.7ex>[rr]_-{\mc_{A}} && \{\text{simple} \ \Lambda_{(\phi,\psi)} \text{-modules} \ \text{such that} \ \mU_{A}(X,Y,f,g)\neq 0 \} \ar@<-.7ex>[ll]_-{\mU_{A}}
  }     
\]  
\[
\xymatrix@C=0.4cm{
  \{\text{simple} \ {A/\Image{\psi}} \text{-modules} \} \ar@<-.7ex>[rr]_-{\mz_A} && \{\text{simple} \ \Lambda_{(\phi,\psi)} \text{-modules} \ \text{such that} \ \mU_{B}(X,Y,f,g)= 0 \} \ar@<-.7ex>[ll]_-{\mU_A}
  }
\]  
\[  \ \ \ \ \ \ \ \   \xymatrix@C=0.4cm{
  \{\text{simple} \ B \text{-modules} \} \ar@<-.7ex>[rr]_-{\mc_{B}} && \{\text{simple} \ \Lambda_{(\phi,\psi)} \text{-modules} \ \text{such that} \ \mU_{B}(X,Y,f,g)\neq 0 \} \ar@<-.7ex>[ll]_-{\mU_{B}}    
  }
\] 
\begin{proof}
The second and fourth bijections follow from Lemma \ref{lem1simples}, Lemma \ref{lem2simples} and their duals concerning the functor $\mc_B$. Suppose now that $(0,Y,0,0)$ is a simple $\Lambda_{(\phi,\psi)}$-module. We claim that $\mU_{B}(0,Y,0,0)=Y$ is
a simple $B$-module. Since $(0,Y,0,0)$ is a $\Lambda_{(\phi,\psi)}$-module we have $\Phi_Y=0$. Let $Y'$ be a non-zero submodule of $Y$ and $i\colon Y'\hookrightarrow Y$ the inclusion map. Since $\Phi_{Y'}\circ i=(M\otimes_AN\otimes_B i)\circ \Phi_Y=0$ it follows that $\Phi_{Y'}=0$. This implies that we have the monomorphism $(0,i)\colon (0,Y',0,0)\lxr (0,Y,0,0)$ and since $(0,Y',0,0)$ is not zero and $(0,Y,0,0)$ is simple we infer that the $\Lambda_{(\phi,\psi)}$-modules $(0,Y',0,0)$ and $(0,Y,0,0)$ are isomorphic. Then $Y'\simeq Y$ and therefore the $B$-module $Y$ is simple. Conversely starting with a simple $B$-module $Y$ with $\Phi_Y=0$ then we deduce that the object
$\mz_{B}(Y)=(0,Y,0,0)$ is simple. Then the first bijection follows and similarly we derive the third one.
\end{proof} 
\end{prop}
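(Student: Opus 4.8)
The plan is to establish the four bijections by combining the two non-trivial ones (the second and fourth) with the two "recollement kernel" ones (the first and third), exactly as the statement groups them. For the second bijection, the maps $\mc_A$ and $\mU_A$ are already set up: by \lemref{lem1simples}, $\mc_A$ sends a simple $A$-module $S$ to a simple $\Lambda_{(\phi,\psi)}$-module, and since $\mU_A\mc_A(S) = \mU_A(S, \Image{\mu_S}, \kappa, \lambda) = S$ (read off from the definition of $\mc_A$), the image indeed lies in the set of simples with $\mU_A$-image nonzero, and $\mU_A$ is a left inverse to $\mc_A$. Conversely, given a simple $\Lambda_{(\phi,\psi)}$-module $(X,Y,f,g)$ with $X = \mU_A(X,Y,f,g) \neq 0$, \lemref{lem2simples} tells us $X$ is a simple $A$-module and $\mc_A\mU_A(X,Y,f,g) \simeq (X,Y,f,g)$, so $\mc_A$ is a left inverse to $\mU_A$ on this set. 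Thus $\mc_A$ and $\mU_A$ are mutually inverse bijections. The fourth bijection is obtained by the symmetric argument: one simply invokes the dual statements of \lemref{lem1simples} and \lemref{lem2simples} for the functor $\mc_B \colon \smod{B} \lxr \smod{\Lambda_{(\phi,\psi)}}$ (defined analogously using the canonical map $\mt_B(Y) \lxr \mh_B(Y)$), interchanging the roles of $A$ and $B$, of $M$ and $N$, and of $\phi$ and $\psi$.

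For the first bijection, I would argue directly as sketched at the end: if $(0,Y,0,0)$ is a simple $\Lambda_{(\phi,\psi)}$-module, the condition defining objects of $\M(\Lambda)$ forces $\Phi_Y = 0$, and since any nonzero $B$-submodule $i\colon Y'\hookrightarrow Y$ satisfies $\Phi_{Y'}\circ i = (M\otimes_A N\otimes_B i)\circ\Phi_Y = 0$, hence $\Phi_{Y'} = 0$ as $i$ is monic, one gets a monomorphism $(0,i)\colon (0,Y',0,0)\lxr(0,Y,0,0)$ of $\Lambda_{(\phi,\psi)}$-modules; simplicity then yields $Y'\simeq Y$, so $Y$ is a simple $B$-module. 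Moreover $\Phi_Y = 0$ is precisely the statement that the $B$-action on $Y$ factors through $B/\Image{\phi}$, so $Y$ is a simple $B/\Image{\phi}$-module. Conversely, a simple $B/\Image{\phi}$-module $Y$ is a simple $B$-module with $\Phi_Y = 0$, and then $\mz_B(Y) = (0,Y,0,0)$ is easily seen to be simple (any subobject is of the form $(0,Y',0,0)$ with $Y'$ a $B$-submodule of $Y$). That $\mz_B$ and $\mU_B$ are mutually inverse here is immediate from $\mU_B\mz_B(Y) = Y$ and the displayed description of simples with $\mU_A = 0$ as the $(0,Y,0,0)$. The third bijection follows by the same argument with $\mz_A$, $A/\Image{\psi}$, and $\Psi$ in place of $\mz_B$, $B/\Image{\phi}$, $\Phi$, using \propref{recollement}(iv)--(v) which identifies $\Ker{\mU_B}$ with $\Mod{A/\Image{\psi}}$.

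The only genuinely substantive input is \lemref{lem1simples} and \lemref{lem2simples} (already proved in the excerpt), so the main obstacle here is bookkeeping rather than mathematics: one must be careful that the four sets of simples on the right-hand sides are correctly matched — in particular that "$\mU_A(X,Y,f,g) = 0$" is the same condition as "$(X,Y,f,g)$ lies in $\Ker{\mU_A} \simeq \Mod{B/\Image{\phi}}$", and likewise for $\mU_B$ — and that the duality $\du\colon \M(\Lambda)\lxr\M(\Lambda^{\op})$ described before \propref{inj} is what legitimizes passing from the $\mc_A$-statements to the $\mc_B$-statements. I would also note for completeness that these four sets together exhaust the simple $\Lambda_{(\phi,\psi)}$-modules, since a simple module either has $\mU_A$-image zero or nonzero, matching the Jordan–Hölder count coming from the idempotents $e_1, e_2$.
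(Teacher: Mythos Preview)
Your proposal is correct and follows essentially the same approach as the paper: you invoke \lemref{lem1simples} and \lemref{lem2simples} (plus their $B$-analogues) for the second and fourth bijections, and give the same direct submodule argument via $\Phi_Y=0$ for the first and third. You are somewhat more explicit than the paper in verifying that $\mU_A\mc_A=\iden$ and $\mc_A\mU_A\simeq\iden$ on the relevant sets, and in spelling out that $\Phi_Y=0$ is exactly the condition that $Y$ is a $B/\Image{\phi}$-module; these are useful clarifications but not departures from the paper's line of argument.
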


\begin{rem}
Let $\{S_1,\ldots , S_n\}$ be a complete set of simple $A$-modules and $\{S_1',\ldots , S_m'\}$ a complete set of simple $B$-modules. The simple ${B/\Image{\phi}}$-modules are the simple $B$-modules with the additional property that $\Phi_{S_1'}=\cdots=\Phi_{S_m'}=0$. Then from the first two bijections of Proposition $3.5$ the simple $\Lambda_{(\phi,\psi)}$-modules are of the form$\colon$ $\mc_A(S_1), \ldots, \mc_A(S_n), \mz_B(S_1'),\ldots, \mz_B(S_m')$. Similarly the simple ${A/\Image{\psi}}$-modules are the simple $A$-modules such that $\Psi_{S_1}=\cdots=\Psi_{S_n}=0$, and using the last two bijections of Proposition $3.5$ we get that the simple $\Lambda_{(\phi,\psi)}$-modules are the following objects$\colon$ $\mc_B(S_1'), \ldots, \mc_B(S_m'), \mz_A(S_1),\ldots, \mz_A(S_n)$. It is easy to check that these two descriptions essentially coincide.  
\end{rem}

\subsection{Selfinjective Artin Algebras}
After the complete description of the indecomposable projective and
injective $\Lambda_{(\phi,\psi)}$-modules we are interested to find conditions for
the Artin algebra $\Lambda_{(\phi,\psi)}$ to be selfinjective. Recall
that an Artin algebra $A$ is \textsf{selfinjective} if $A$ is an injective and projective $A$-module.

The following result gives a sufficient condition for a Morita ring to be selfinjective. 

\begin{prop}
Let $\Lambda_{(\phi,\psi)}$ be a Morita ring which is an Artin algebra. Assume that the adjoint pair of functors $(M\otimes_A-,\Hom_B(M,-))$ induces an equivalence between $\proj{A}$ and $\inj{B}$, and the adjoint pair of functors $(N\otimes_B-,\Hom_A(N,-))$ induces an equivalence between $\proj{B}$ and $\inj{A}$. Then the algebra $\Lambda_{(\phi,\psi)}$ is selfinjective.
\begin{proof}
Let $\mt_A(P)=(P,{M\otimes_{A}P}, \iden_{M\otimes_{A}P},\Psi_P)$ be an indecomposable projective $\Lambda_{(\phi,\psi)}$-module, where $P$ is an indecomposable projective $A$-module. Since the categories $\proj{A}$ and $\inj{B}$ are equivalent we have isomorphisms $a\colon M\otimes_AP \stackrel{\simeq}{\lxr} J $ and $b^{-1}\colon \Hom_B(M,J)\stackrel{\simeq}{\lxr} P $ for some injective $B$-module $J$, where $b=\delta_P\circ \Hom_B(M,a)$. Then we have the injective $\Lambda_{(\phi,\psi)}$-module $\mh_B(J)=(\Hom_{B}(M,J),J,\epsilon_J,\delta_{N{\otimes_B} J}\circ \Hom_B(M,\Phi_J))$ and we claim that the map $(b,a)\colon \mt_A(P)\lxr \mh_B(J)$ is an isomorphism of $\Lambda_{(\phi,\psi)}$-modules. It is straightforward that the map $(b,a)$ is an isomorphism since both the maps $b$ and $a$ are isomorphims. Hence we have only to prove that $(b,a)$ is a morphism in $\smod{\Lambda_{(\phi,\psi)}}$, i.e. we have to show that the following diagrams are commutative$\colon$
\[
\xymatrix{
  M\otimes_A P \ar[d]_{M\otimes b}^{\simeq} \ar[r]^{\iden_{M\otimes P}} & M\otimes_AP \ar[d]^{a}_{\simeq}     \\
  M\otimes_A\Hom_B(M,J)    \ar[r]_{ \ \ \ \ \ \ \ \ \simeq}^{ \ \ \ \ \ \ \ \ \ \epsilon_{J}} & J } \ \ \ \ \ \ \  \ \ \  \xymatrix{
  N\otimes_B M\otimes_AP \ar[d]_{N\otimes a}^{\simeq} \ar[rrr]^{\Psi_P} &&&  P \ar[d]^{b}_{\simeq}     \\
  N\otimes_B J    \ar[rrr]^{\delta_{N\otimes J}\circ \Hom_B(M,\Phi_J) \ \ \ \ } &&& \Hom_{B}(M,J)    }
\]
Clearly the first diagram is commutative and for the second we have
\begin{eqnarray}
 (N\otimes_Ba)\circ \delta_{N\otimes J}\circ \Hom_B(M,\Phi_J) &=& \delta_{N\otimes M\otimes P}\circ \Hom_B(M,M\otimes_AN\otimes_Ba)\circ \Hom_B(M,\Phi_J)  \nonumber \\
 &=&  \delta_{N\otimes M\otimes P}\circ \Hom_B(M,\Phi_{M\otimes P})\circ \Hom_B(M,a)  \nonumber \\
 &=& \delta_{N\otimes M\otimes P}\circ \Hom_B(M,M\otimes \Psi_P)\circ \Hom_B(M,a)   \nonumber  \\
 &=& \Psi_P\circ \delta_P\circ \Hom_B(M,a) \nonumber \\
 &=& \Psi_P \circ b \nonumber  
\end{eqnarray}
Thus $\mt_A(P)\simeq \mh_B(J)$ and so the indecomposable projective $\Lambda_{(\phi,\psi)}$-module $\mt_A(P)$ is injective. Moreover from Proposition \ref{proj} we have also the indecomposable projective $\Lambda_{(\phi,\psi)}$-module $\mt_B(Q)=({N\otimes_{B}Q},Q,\phi \otimes \iden_{Q},\iden_{N\otimes_{B}Q})$ for some indecomposable projective $B$-module $Q$. Then using the equivalence between $\proj{B}$ and $\inj{A}$ it follows as above that $\mt_B(Q)\simeq \mh_A(I)$ for some injective $A$-module $I$ and so $\mt_B(Q)$ is an injective $\Lambda_{(\phi,\psi)}$-module. Since every indecomposable projective $\Lambda_{(\phi,\psi)}$-module is injective we infer that the Artin algebra $\Lambda_{(\phi,\psi)}$ is selfinjective. 
\end{proof}
\end{prop}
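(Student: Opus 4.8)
The plan is to combine the explicit classifications of the indecomposable projective and injective $\Lambda_{(\phi,\psi)}$-modules obtained in Proposition~\ref{proj} and Proposition~\ref{inj}. Since $\Lambda_{(\phi,\psi)}$ is an Artin algebra, it suffices to show that every indecomposable projective left $\Lambda_{(\phi,\psi)}$-module is injective, because the left regular module is a finite direct sum of such modules and a finite direct sum of injectives over an Artin algebra is injective. By Proposition~\ref{proj} the indecomposable projectives are precisely the objects $\mt_A(P)=(P, M\otimes_A P, \iden_{M\otimes_A P}, \Psi_P)$ with $P$ an indecomposable projective $A$-module and the objects $\mt_B(Q)=(N\otimes_B Q, Q, \Phi_Q, \iden_{N\otimes_B Q})$ with $Q$ an indecomposable projective $B$-module; so the whole task reduces to proving that each $\mt_A(P)$ and each $\mt_B(Q)$ is injective.

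First I would handle $\mt_A(P)$; the case of $\mt_B(Q)$ is symmetric, exchanging the roles of $(A,M)$ and $(B,N)$ and invoking the other hypothesis. By assumption the restriction of $M\otimes_A-$ to $\proj{A}$ is a quasi-inverse equivalence onto $\inj{B}$, so there are an injective $B$-module $J$ and an isomorphism $a\colon M\otimes_A P\to J$; its adjoint transpose $b:=\delta_P\circ\Hom_B(M,a)\colon P\to\Hom_B(M,J)$ is then also an isomorphism, since $\delta_P$ is invertible on objects $P\in\proj{A}$ and $\Hom_B(M,a)$ is invertible. The natural target is the indecomposable injective $\mh_B(J)=(\Hom_B(M,J), J, \epsilon_J, \delta_{N\otimes_B J}\circ\Hom_B(M,\Phi_J))$ from Proposition~\ref{inj}. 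One then checks that the pair $(b,a)$ is a morphism $\mt_A(P)\to\mh_B(J)$ in $\M(\Lambda)$; as both components are isomorphisms, it is automatically an isomorphism of $\Lambda_{(\phi,\psi)}$-modules, whence $\mt_A(P)$ is injective. Being a morphism in $\M(\Lambda)$ amounts to the commutativity of two squares, one comparing $\iden_{M\otimes_A P}$ with $\epsilon_J$ and one comparing $\Psi_P$ with $\delta_{N\otimes_B J}\circ\Hom_B(M,\Phi_J)$.

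The first square is immediate: unwinding the definition of $b$, the naturality of the counit $\epsilon$ and the triangle identity $\epsilon_{M\otimes_A P}\circ(M\otimes_A\delta_P)=\iden_{M\otimes_A P}$ give $\epsilon_J\circ(M\otimes_A b)=a$, which is exactly what is required. The real work, and the step I expect to be the main obstacle, is the second square. The argument there is a diagram chase that uses, in turn, the naturality of the unit $\delta\colon\iden_{\Mod A}\to\Hom_B(M,M\otimes_A-)$, the naturality of the comparison map $\Phi$, and the compatibility axiom defining objects of $\M(\Lambda)$ — which encodes the associativity relations of the Morita context and, applied to the module $\mt_A(P)$, identifies $\Phi_{M\otimes_A P}$ with $M\otimes_A\Psi_P$; a further application of the naturality of $\delta$ then collapses the resulting composite to $\Psi_P\circ b$, completing the verification. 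Once $\mt_A(P)\cong\mh_B(J)$ is established, and symmetrically $\mt_B(Q)\cong\mh_A(I)$ for suitable injectives $I$, every indecomposable projective $\Lambda_{(\phi,\psi)}$-module is injective; hence the left regular module is injective and $\Lambda_{(\phi,\psi)}$ is selfinjective.
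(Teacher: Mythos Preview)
Your proposal is correct and follows essentially the same approach as the paper's own proof: both construct an explicit isomorphism $(b,a)\colon \mt_A(P)\to \mh_B(J)$ with $b=\delta_P\circ\Hom_B(M,a)$, verify the two commutativity squares (the first via the triangle identity, the second via naturality of $\delta$ and $\Phi$ together with the relation $\Phi_{M\otimes_A P}=M\otimes_A\Psi_P$), and then handle $\mt_B(Q)$ symmetrically. Your description of the second square as a naturality/compatibility chase matches the paper's displayed chain of equalities step by step.
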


\begin{exam}
Let $\Lambda $ be a selfinjective Artin algebra. Then from Proposition $3.7$ we get that $\bigl(\begin{smallmatrix}
\Lambda & \Lambda \\
\Lambda & \Lambda
\end{smallmatrix}\bigr)_{(\phi,\phi)}$ is a selfinjective Artin algebra. 
\end{exam}

The following example shows that the converse of Proposition $3.7$ is not true in general.

\begin{exam}  Let $K$ be a field and $K\mathcal Q$ be the path algebra
where $\mathcal Q$ is the quiver
\[
\xymatrix{
\stackrel{v_1}{\circ}\ar^a[rr]&&\stackrel{v_2}{\circ}\ar_b[dl]\\
&\stackrel{v_3}{\circ}\ar_c[ul]
}
\]
Let $I$ be the ideal generated by $ba, cb$, and $ac$ and 
$\Lambda=K\mathcal Q/I$.   Then $\Lambda$ is a selfinjective 
finite dimensional $K$-algebra.  Setting $e=v_1$ and $e'=v_2+v_3$,
we view $\Lambda$ as a Morita ring via 
\[
\Lambda_{(\phi,\psi)} = \begin{pmatrix} e\Lambda e & e\Lambda e'\\
e'\Lambda e & e'\Lambda e'
\end{pmatrix}
\]
Note that, in this case, $\phi=\psi=0$. Since $e\Lambda e$ has one indecomposable projective-injective module up to isomorphism and $e'\Lambda e'$ has two nonisomorphic indecomposable projective-injective modules, the converse to Proposition $3.7$ fails. 
\end{exam}

For an Artin algebra $\Lambda$ we denote by ${\llength{(\Lambda)}}$ the Loewy length of $\Lambda$. We have the following consequence, where for the notion of Auslander's representation dimension we refer to \cite{Auslander:queen}.

\begin{cor}
Let $\Lambda$ be a selfinjective Artin algebra. Then$\colon$
\[
\rep{\begin{pmatrix}
           \Lambda & \Lambda \\
           \Lambda & \Lambda \\
         \end{pmatrix}_{(0,0)}
} \ \leq \ \ 2 \, {\llength{(\Lambda)}} 
\]
\begin{proof}
Since $\Lambda$ is selfinjective we have from Example $3.8$ that the matrix Artin algebra $\bigl(\begin{smallmatrix}
\Lambda & \Lambda \\
\Lambda & \Lambda
\end{smallmatrix}\bigr)_{(0,0)}$ is selfinjective. Then the result follows from 
\cite{Auslander:queen} since $\llength{\bigl(\begin{smallmatrix}
\Lambda & \Lambda \\
\Lambda & \Lambda
\end{smallmatrix}\bigr)_{(0,0)}}=2 \, \llength{(\Lambda)}$.
\end{proof}
\end{cor}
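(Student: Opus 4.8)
The plan is to combine the self-injectivity statement from Example $3.8$ with Auslander's classical bound on the representation dimension of a self-injective Artin algebra in terms of its Loewy length. First I would invoke Example $3.8$: since $\Lambda$ is a self-injective Artin algebra, the matrix algebra $\bigl(\begin{smallmatrix}\Lambda & \Lambda \\ \Lambda & \Lambda\end{smallmatrix}\bigr)_{(0,0)}$ is again a self-injective Artin algebra. (Strictly, Example $3.8$ was phrased for the $(\phi,\phi)$ version via Proposition $3.7$, but when $\Lambda$ is self-injective the hypotheses of Proposition $3.7$ are met — $M\otimes_\Lambda-=\iden$ sends $\proj\Lambda$ to $\inj\Lambda$ since $\Lambda$ self-injective means $\proj\Lambda=\inj\Lambda$, and likewise for the other pair — so the conclusion applies in particular with $\phi=\psi=0$.)

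Next I would recall Auslander's theorem (from \cite{Auslander:queen}): for any self-injective Artin algebra $\Gamma$, one has $\rep{\Gamma}\leq \llength(\Gamma)$, obtained by taking as generator-cogenerator the module $\bigoplus_{i=0}^{\llength(\Gamma)-1}\Gamma/\Rad^i\Gamma$ and checking that the global dimension of its endomorphism ring is at most $\llength(\Gamma)$. So it remains only to compute the Loewy length of $\Gamma=\bigl(\begin{smallmatrix}\Lambda & \Lambda \\ \Lambda & \Lambda\end{smallmatrix}\bigr)_{(0,0)}$ and show it equals $2\,\llength(\Lambda)$.

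For the Loewy length computation I would identify the Jacobson radical of $\Gamma$. Writing $J=\Rad(\Lambda)$ and $\Gamma=\Lambda_{(0,0)}(\M)$ with $A=B=M=N=\Lambda$ and $\phi=\psi=0$, the radical of $\Gamma$ is $\bigl(\begin{smallmatrix}J & \Lambda \\ \Lambda & J\end{smallmatrix}\bigr)$: this is a nilpotent ideal (using $\phi=\psi=0$, so the off-diagonal entries multiply into the diagonal entries but never produce units) with semisimple quotient $\bigl(\begin{smallmatrix}\Lambda/J & 0 \\ 0 & \Lambda/J\end{smallmatrix}\bigr)$. Then one computes the powers $\Rad^k(\Gamma)$ inductively: because of the zero bimodule maps, the matrix multiplication is ``triangular-like'' in the sense that each multiplication by $\Rad(\Gamma)$ either applies a power of $J$ to a diagonal entry or swaps the two coordinates while moving an off-diagonal $\Lambda$ into a diagonal slot. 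A short induction shows $\Rad^{2t}(\Gamma)=\bigl(\begin{smallmatrix}J^t & J^{t-1} \\ J^{t-1} & J^t\end{smallmatrix}\bigr)$ and $\Rad^{2t+1}(\Gamma)=\bigl(\begin{smallmatrix}J^{t+1} & J^{t} \\ J^{t} & J^{t+1}\end{smallmatrix}\bigr)$ (with the obvious reading for small $t$), so that $\Rad^k(\Gamma)=0$ precisely when $k=2\,\llength(\Lambda)$; hence $\llength(\Gamma)=2\,\llength(\Lambda)$. Feeding this into Auslander's bound yields $\rep{\Gamma}\leq\llength(\Gamma)=2\,\llength(\Lambda)$, which is the claim.

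The main obstacle is the radical power bookkeeping: one must be careful that the off-diagonal $\Lambda$-entries, which are not themselves contained in any ``radical'' of a component ring, nonetheless contribute correctly to the filtration because every product of two of them lands in a diagonal slot (and products of an off-diagonal entry with an already-radical diagonal entry push the $J$-power up). Getting the exact exponents right — in particular checking that $\Rad^{2\llength(\Lambda)-1}(\Gamma)\neq 0$ while $\Rad^{2\llength(\Lambda)}(\Gamma)=0$ — is the only place where real care is needed; everything else is a direct appeal to Example $3.8$ and \cite{Auslander:queen}.
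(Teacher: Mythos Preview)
Your overall strategy is exactly the paper's: invoke Example~3.8 for selfinjectivity, apply Auslander's bound $\rep\Gamma\le\llength(\Gamma)$ from \cite{Auslander:queen}, and then compute the Loewy length of $\Gamma=\bigl(\begin{smallmatrix}\Lambda&\Lambda\\\Lambda&\Lambda\end{smallmatrix}\bigr)_{(0,0)}$. The paper simply asserts $\llength(\Gamma)=2\,\llength(\Lambda)$ without further argument; you attempt to supply the radical computation.

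That computation, however, contains a genuine error. With $\phi=\psi=0$ the diagonal entry of a product is $aa'+\psi(n\otimes m')=aa'$ (and likewise $bb'$), so products of two off-diagonal entries \emph{vanish} rather than ``land in a diagonal slot''; your picture of multiplication ``swapping the two coordinates while moving an off-diagonal $\Lambda$ into a diagonal slot'' does not describe this ring. Your displayed formulas are also internally inconsistent: they force $\Rad^{2t+1}(\Gamma)=\Rad^{2(t+1)}(\Gamma)$, and plugging in $t=\ell:=\llength(\Lambda)$ your own even formula gives $\Rad^{2\ell}(\Gamma)=\bigl(\begin{smallmatrix}0&J^{\ell-1}\\J^{\ell-1}&0\end{smallmatrix}\bigr)\ne 0$, contradicting your conclusion. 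The correct induction yields $\Rad^k(\Gamma)=\bigl(\begin{smallmatrix}J^k&J^{k-1}\\J^{k-1}&J^k\end{smallmatrix}\bigr)$ for all $k\ge 1$, whence $\llength(\Gamma)=\llength(\Lambda)+1$ (you can check this against Example~4.13, where $\Lambda=K[x]/(x^2)$ and the indecomposable projectives visibly have Loewy length $3$, not $4$). This still proves the Corollary, since $\llength(\Lambda)+1\le 2\,\llength(\Lambda)$ for any nonzero $\Lambda$, and in fact gives a sharper bound; the equality $\llength(\Gamma)=2\,\llength(\Lambda)$ asserted in the paper's proof appears to be a slip as well.
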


\section{Functorially Finite Subcategories}
In this section our purpose is to 
study finiteness conditions on subcategories of $\Mod{\Lambda_{(0,0)}}$. The reason for restricting to the case where $\phi=\psi=0$ is that we have full embeddings from the module categories $\Mod{A}$, $\Mod{B}$, $\Mod{\bigl(\begin{smallmatrix}
A & 0 \\
_BM_A & B
\end{smallmatrix}\bigr)}$ and $\Mod{\bigl(\begin{smallmatrix}
A & _AN_B \\
0 & B
\end{smallmatrix}\bigr)}$ to $\Mod{\Lambda_{(0,0)}}$. In particular we show that the above natural subcategories of $\Mod{\Lambda_{(0,0)}}$ are bireflective and therefore functorially finite. 

We start by defining the following full subcategories of $\Mod{\Lambda_{(0,0)}}$:
\[
\left\{
  \begin{array}{ll}
   \X = \big\{(X,Y,f,0) \ | \ f\colon M\otimes_AX\lxr Y \ \text{is an epimorphism} \big\}  & \hbox{} \\
    & \hbox{} \\
   \Y = \big\{(0,Y,0,0) \ | \ Y\in \Mod{B} \big\}=\Image{\mz_B}  & \hbox{} \\
    & \hbox{} \\
  \Z = \big\{(X,Y,0,g) \ | \ \rho(g)\colon Y\lxr \Hom_{A}(N,X) \ \text{is a monomorphism} \big\}  & \hbox{} \\
   & \hbox{} \\
  \X' = \big\{(X,Y,0,g) \ | \ g\colon N\otimes_BY\lxr X \ \text{is an epimorphism} \big\}  & \hbox{} \\
   & \hbox{} \\
   \Y' = \big\{(X,0,0,0) \ | \ X\in \Mod{A} \big\}=\Image{\mz_A}  & \hbox{} \\
   & \hbox{} \\
  \Z' = \big\{(X,Y,f,0) \ | \ \pi(f)\colon X\lxr \Hom_{B}(M,Y) \ \text{is a monomorphism} \big\}  & \hbox{}
  \end{array}
\right.
\]
 
We will show that the above subcategories have a special structure in $\Mod{\Lambda_{(0,0)}}$. First let us recall the notion of torsion pairs for abelian categories. Let $\B$ be an abelian category. Then a \textsf{torsion pair} in $\B$ is a pair $(\U,\V)$
of strict full subcategories of $\B$ satisfying the following
conditions$\colon$
\begin{enumerate}
\item $\Hom_{\B}(\U,\V)=0$, i.e. $\Hom_{\B}(U,V)=0$ for all $U\in
\U$ and $V\in \V$.

\item For every object $B\in \B$ there exists a short exact sequence $0 \lxr U_B \stackrel{f_B}{\lxr} B \stackrel{g^B}{\lxr} V^B \lxr 0$ in $\B$ such that $U_B\in \U$ and $V^B\in \V$.
\end{enumerate}
In that case, $\U$ is called a \textsf{torsion class} and
$\V$ is called a \textsf{torsion-free class}. It is very easy to observe that for a torsion pair $(\U,\V)$ in $\B$ we have that $\U$ is closed under factors, extensions and coproducts and $\V$ is closed under subobjects, extensions and products. Moreover in this case we have $\U={^\bot{\V}}=\{B\in \B \ | \ \Hom_{\B}(B,\V)=0 \}$ and $\V={\U}^\bot=\{B\in \B \ | \ \Hom_{\B}(\U,B)=0 \}$. A triple $(\U,\V,\W)$ of strict full subcategories of $\B$ is called \textsf{torsion, torsion-free triple} or \textsf{TTF-triple} for short, if $(\U,\V)$ and $(\V,\W)$ are torsion pairs. Then $\V$ is called a \textsf{TTF-class}. We refer to \cite{BR} for more informations about torsion theories.

Recall also that a full subcategory $\C$ is called \textsf{contravariantly finite} in $\B$ if for any object $B$ in $\B$ there exists a map $f_B\colon C_B\lxr B$, where $C_B$ lies in $\C$, such that the induced map $\Hom_{\B}(\C,f_B)\colon \Hom_{\B}(\C,C_B)\lxr \Hom_{\B}(\C,B)$ is surjective. In this case the map $f_B$ is called a right $\C$-approximation of $B$. Dually we have the notions of covariant finiteness and left approximations. Then $\C$ is called \textsf{functorially finite} if it is both contravariantly and covariantly finite.

The following result gives the  precise structure of the subcategories $\X, \Y, \Z, \X^{\prime}, \Y^{\prime}$, $\Z^{\prime}$ in $\Mod{\Lambda_{(0,0)}}$.

\begin{prop}\label{ttf-triple}
Let $\Lambda_{(0,0)}$ be a Morita ring. Then the triples $(\X,\Y,\Z)$ and $(\X',\Y',\Z')$ are \textsf{TTF}-triples in
$\Mod{\Lambda_{(0,0)}}$.
\begin{proof}
Let $(X,Y,f,g)$ be a $\Lambda_{(0,0)}$-module. We write $f=\kappa\circ \lambda$ for the factorization of $f$ through the $\Image{f}$. Then from the counit of the adjoint pair $(\mt_A,\mU_A)$ we have the following exact sequence$\colon$
 \[
\xymatrix{
 0 \ar[r] & (0,\Ker{f},0,0) \ar[r] & {\mt}_{A}(X) \ar@{->>}[d]^{} \ar[r]^{(\iden_{X},f) \ \ \ \ }
                  &   (X,Y,f,g)  \ar[r] & (0,\Coker{f},0,0) \ar[r] & 0   \\
          & &      \Image{(\iden_{X},f)} \ar@{>->}[ur]  &     &    &      }
\]
where $\Image{(\iden_{X},f)}=(X,\Image{f},\kappa,(N\otimes_B \lambda)\circ g)$. But from the following commutative diagram$\colon$
\[
\xymatrix{
  N\otimes_BM\otimes_AX \ar[rr]^{N\otimes f} \ar@{->>}[d]_{N\otimes \kappa} \ar @/^1.5pc/[rrrr]^{{\Psi_X}=0} && N\otimes_BY  \ar[rr]^{g} && X  \\
               N\otimes_B\Image{f} \ar[urr]_{N\otimes \lambda}   &     &         }
\]
it follows that the map $({N\otimes_B \lambda})\circ g$ is zero since the map ${N\otimes_B\kappa}$ is an epimorphism. Therefore we have the exact sequence $0 \lxr (X,\Image{f},\kappa,0) \lxr (X,Y,f,g) \lxr (0,\Coker{f},0,0) \lxr 0$ with $(X,\Image{f},\kappa,0)\in \X$ and $(0,\Coker{f},0,0)\in \Y$. Let $(X,Y,f,0)\in \X$ and $(0,Y',0,0)\in \Y$. Then since the map $f\colon M\otimes_AX\lxr Y$ is an epimorphism we get that $\Hom_{\Lambda_{(0,0)}}((X,Y,f,0),(0,Y',0,0))=0$. Hence $(\X,\Y)$ is a torsion pair in $\Mod{\Lambda_{(0,0)}}$. We continue now in order to show that $(\Y,\Z)$ is also a torsion pair. Let $(X,Y,f,g)$ be a $\Lambda_{(0,0)}$-module. From the unit of the adjoint pair $(\mU_A,\mh_A)$ we have the following exact sequence$\colon$
\[
\xymatrix{
 0 \ar[r] & (0,\Ker{\rho{(g)}},0,0) \ar[r] & (X,Y,f,g) \ar@{->>}[d]^{} \ar[r]^{ \ \ (\iden_{X},\rho{(g)})}
                  &   \mh_A(X)  \ar[r] & (0,\Coker{\rho{(g)}},0,0) \ar[r] & 0   \\
          & &      \Image{(\iden_{X},\rho{(g)})}  \ar@{>->}[ur]  &     &    &      }
\]
where $\rho{(g)}=\delta'_Y\circ \Hom_A(N,g)$. We write $\rho(g)=\gamma \circ n$ for the factorization through the $\Image{\rho{(g)}}$. The image of the morphism $(\iden_{X},\rho{(g)})$ is the $\Lambda_{(0,0)}$-module $\Image{(\iden_{X},\rho{(g)})}=(X,\Image{\rho{(g)}},m,\rho^{-1}(n))$, where $m=f\circ \gamma$ and the map $\rho(\rho^{-1}(n))=n\colon \Image{\rho{(g)}}\lxr \Hom_A(N,X)$ is a monomorphism. But then the map $m=f\circ \gamma=\pi^{-1}(\pi(f)\circ \Hom_B(M,\gamma))=0$ since the following diagram is commutative$\colon$
\[
\xymatrix{
  X \ar[rr]^{\pi(f)  }  \ar @/^1.5pc/[rrrr]^{\Psi'_X=0} && \Hom_B(M,Y) \ar[d]_{\Hom_B(M,\gamma)}  \ar[rr]^{\Hom_B(M,\rho{(g)}) \ \ \ \ \ \ \ \ } && \Hom_B(M,\Hom_A(N,X))  \\
         & &      \Hom_B(M,\Image{\rho{(g)}}) \ \ \ \  \ \ \ \ar@{>->}[urr]_{\Hom_B(M,n)}    }
\]
Thus we obtain the exact sequence
$ 0 \lxr (0,\Ker{\rho{(g)}},0,0) \lxr (X,Y,f,g) \lxr (X,\Image{\rho{(g)}},0,\rho^{-1}(n)) \lxr 0 $ in $\Mod{\Lambda_{(0,0)}}$ with $(0,\Ker{\rho{(g)}},0,0)\in \Y$ and $(X,\Image{\rho{(g)}},0,\rho^{-1}(n))\in \Z$. Let $(0,b)\colon (0,Y',0,0)\lxr (X,Y,0,g)$ be a morphism in $\Mod{\Lambda_{(0,0)}}$ with $(0,Y',0,0)\in \Y$ and $(X,Y,0,g)\in \Z$. Then from the following commutative diagrams$\colon$ 
\[
\xymatrix{
  N\otimes_BY' \ar[d]_{N\otimes b} \ar[r]^{} & 0 \ar[d]^{}     \\
  N\otimes_BY    \ar[r]^{\ \ \ g} & X                  } \ \ \ \ \ \ \  \ \ \  \xymatrix{
   Y' \ar[d]_{ b} \ar[r]^{} &  0 \ar[d]^{}     \\
  Y    \ar[r]^{\rho{(g)} \ \ \ \ \ \ \ } & \Hom_A(N,X)                  }
\]  
it follows that $b=0$ since the map $\rho{(g)}$ is a monomorphism. Thus $\Hom_{\Lambda_{(0,0)}}(\Y,\Z)=0$ and therefore we conclude that  
$(\Y,\Z)$ is a torsion pair in $\Mod{\Lambda_{(0,0)}}$. Similarly 
we prove that $(\X',\Y',\Z')$ is a \textsf{TTF}-triple in $\Mod{\Lambda_{(0,0)}}$.
\end{proof}
\end{prop}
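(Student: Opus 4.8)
The plan is to reduce everything to the triple $(\X,\Y,\Z)$: the statement for $(\X',\Y',\Z')$ will then follow by the evident symmetry of $\Lambda_{(0,0)}$ interchanging $(A,M)$ with $(B,N)$ (swapping the two diagonal idempotents gives a ring isomorphism $\bigl(\begin{smallmatrix}A&N\\M&B\end{smallmatrix}\bigr)\cong\bigl(\begin{smallmatrix}B&M\\N&A\end{smallmatrix}\bigr)$ under which an object $(X,Y,f,g)$ of $\M(\Lambda)$ corresponds to $(Y,X,g,f)$, carrying $\X,\Y,\Z$ onto $\X',\Y',\Z'$). So I would verify directly that $(\X,\Y)$ and $(\Y,\Z)$ are torsion pairs in $\Mod{\Lambda_{(0,0)}}$, i.e.\ that the relevant $\Hom$'s vanish and that every module fits into the appropriate short exact sequence.

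The two $\Hom$-vanishings are short diagram chases. A morphism $(a,b)\colon(X,Y,f,0)\lxr(0,Y',0,0)$ with source in $\X$ has $a=0$, so the $M$-coordinate square reads $f\circ b=0$; since $f$ is epic, $b=0$, whence $\Hom_{\Lambda_{(0,0)}}(\X,\Y)=0$. Dually, a morphism $(a,b)\colon(0,Y',0,0)\lxr(X,Y,0,g)$ with target in $\Z$ has $a=0$, and the $N$-coordinate square reads $(N\otimes_Bb)\circ g=0$, which transposes along the tensor--hom adjunction $(N\otimes_B-,\Hom_A(N,-))$ to $b\circ\rho(g)=0$; as $\rho(g)$ is monic, $b=0$ and $\Hom_{\Lambda_{(0,0)}}(\Y,\Z)=0$.

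The heart of the proof is the construction of the two torsion sequences, and here I would exploit the adjunctions and the exactness of $\mU_A,\mU_B$ recorded in Proposition~\ref{recollement}. Fix $(X,Y,f,g)$. For $(\X,\Y)$: the counit of $(\mt_A,\mU_A)$ yields $(\iden_X,f)\colon\mt_A(X)=(X,M\otimes_AX,\iden_{M\otimes_AX},0)\lxr(X,Y,f,g)$, which is a legitimate $\Lambda_{(0,0)}$-morphism precisely because the structural identity $(N\otimes_Bf)\circ g=\Psi_X=0$ holds for every object of $\M(\Lambda_{(0,0)})$. By exactness of $\mU_A,\mU_B$ its cokernel is $(0,\Coker(f),0,0)\in\Y$ and its image is $(X,\Image(f),p,g')$, where $p\colon M\otimes_AX\epic\Image(f)$ and $g'=(N\otimes_B\iota)\circ g$ for the inclusion $\iota\colon\Image(f)\monic Y$; from $0=(N\otimes_Bf)\circ g=(N\otimes_Bp)\circ g'$ and the surjectivity of $N\otimes_Bp$ (right-exactness of $N\otimes_B-$) one gets $g'=0$, so the image lies in $\X$ and $0\lxr(X,\Image(f),p,0)\lxr(X,Y,f,g)\lxr(0,\Coker(f),0,0)\lxr 0$ is the torsion sequence for $(\X,\Y)$. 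For $(\Y,\Z)$: the unit of $(\mU_A,\mh_A)$ yields $(\iden_X,\rho(g))\colon(X,Y,f,g)\lxr\mh_A(X)=(X,\Hom_A(N,X),0,\epsilon'_X)$, legitimate because $f\circ\rho(g)=0$, which follows from $(N\otimes_Bf)\circ g=0$ and the naturality of the unit $\delta'$ of $(N\otimes_B-,\Hom_A(N,-))$; its kernel is $(0,\Ker(\rho(g)),0,0)\in\Y$ and its image is $(X,\Image(\rho(g)),0,g'')$ --- the $M$-coordinate vanishes since $f\circ\rho(g)=0$ and $\Image(\rho(g))\monic\Hom_A(N,X)$ is monic, while $\rho(g'')$ equals that same monomorphism by the triangle identity $\rho(\epsilon'_X)=\iden$ --- so the image lies in $\Z$ and $0\lxr(0,\Ker(\rho(g)),0,0)\lxr(X,Y,f,g)\lxr(X,\Image(\rho(g)),0,g'')\lxr 0$ is the torsion sequence for $(\Y,\Z)$.

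The hard part, to the extent there is one, is purely organizational: one must check that the two comparison maps $\mt_A(X)\lxr(X,Y,f,g)$ and $(X,Y,f,g)\lxr\mh_A(X)$ genuinely respect both compatibility squares (one square is automatic, the other is exactly the structural identity of $\M(\Lambda_{(0,0)})$), and that after factoring these maps the relevant cross term --- the $N$-coordinate of the first image, the $M$-coordinate of the second --- dies. This vanishing is the one place where the hypothesis $\phi=\psi=0$ (i.e.\ $\Psi_X=0=\Phi_Y$ for all objects) is actually used, hand in hand with the right-exactness of the tensor functors and the naturality of the (co)units. Once this is in place the rest is formal, and $(\X',\Y',\Z')$ follows by the symmetry mentioned at the outset.
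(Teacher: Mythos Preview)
Your proposal is correct and follows essentially the same approach as the paper: both use the counit of $(\mt_A,\mU_A)$ and the unit of $(\mU_A,\mh_A)$ to manufacture the torsion sequences, and both kill the cross-term in the image by invoking the structural identity $\Psi_X=0$ together with right-exactness of the tensor functor (for $(\X,\Y)$) or a monomorphism cancellation (for $(\Y,\Z)$). The only cosmetic difference is that for $(\Y,\Z)$ the paper passes to the $\widetilde{\M}(\Lambda)$ picture and uses the dual identity $\pi(f)\circ\Hom_B(M,\rho(g))=0$ with $\Hom_B(M,n)$ monic, whereas you stay on the tensor side and deduce $f\circ\rho(g)=0$ from $(N\otimes_Bf)\circ g=0$ via naturality of $\delta'$; these are equivalent.
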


\begin{rem}\label{mapzero}
Let $(X,Y,f,g)$ be a $\Lambda_{(0,0)}$-module with $f\colon M\otimes_AX \lxr Y$ an epimorphism. Then the morphism $g\colon N\otimes_BY\lxr X$ is zero since the composition $N\otimes_BM\otimes_AX \lxr N\otimes_BY \lxr X $ is zero and the map ${N\otimes_Bf}$ is an epimorphism. We used this argument in the above proof and note that the same holds for the objects of $\Z$, $\X'$ and $\Z'$ as well.
\end{rem}

As a consequence of Proposition \ref{ttf-triple} and \cite{AR:homological} we have the following.

\begin{cor}
Let $\Lambda_{(0,0)}$ be a Morita ring.
\begin{enumerate}
\item The full subcategory $\X=\{(X,Y,f,0) \ | \ f\colon M\otimes_AX\lxr Y \ \text{is an epimorphism} \}$ is contravariantly finite in 
$\Mod{\Lambda_{(0,0)}}$, closed under extensions, quotients and coproducts, and $\mt_A(\Mod{A})\subseteq \X$.

\item The full subcategory $\X'=\{(X,Y,0,g) \ | \ g\colon N\otimes_BY\lxr X \ \text{is an epimorphism} \}$ is contravariantly finite
 in $\Mod{\Lambda_{(0,0)}}$, closed under extensions, quotients and coproducts, and $\mt_B(\Mod{B})\subseteq \X'$.

\item The full subcategory $\Z=\{(X,Y,0,g) \ | \ \rho(g)\colon Y\lxr \Hom_{A}(N,X) \ \text{is a monomorphism} \}$ is covariantly finite in 
$\Mod{\Lambda_{(0,0)}}$, closed under extensions, subobjects and products, and $\mh_A(\Mod{A})\subseteq \Z$. 

\item The full subcategory $\Z'=\{(X,Y,f,0) \ | \ \pi(f)\colon X\lxr \Hom_{B}(M,Y) \ \text{is a monomorphism} \}$ is covariantly finite
in $\Mod{\Lambda_{(0,0)}}$, closed under extensions, subobjects and products, and $\mh_B(\Mod{B})\subseteq \Z'$.
\end{enumerate}
\end{cor}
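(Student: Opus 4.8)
The plan is to read everything off Proposition~\ref{ttf-triple} together with the standard fact that the two halves of a torsion pair are automatically one-sidedly functorially finite. Concretely, I would first recall that if $(\U,\V)$ is a torsion pair in an abelian category $\B$ with torsion radical $t$, then $\U$ is closed under quotients, extensions and coproducts, $\V$ is closed under subobjects, extensions and products, the inclusion $tB\monic B$ is a right $\U$-approximation of any object $B$ (a map $U\to B$ with $U\in\U$ has zero composite into $B/tB\in\V$, since $\Hom_{\B}(\U,\V)=0$, hence factors through $tB$), and dually the projection $B\epic B/tB$ is a left $\V$-approximation. Thus $\U$ is contravariantly finite and $\V$ is covariantly finite; this, together with the closure properties already observed in the discussion preceding the corollary, is the input taken from \cite{AR:homological}.

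Applying this to the \textsf{TTF}-triple $(\X,\Y,\Z)$ of Proposition~\ref{ttf-triple}: the torsion pair $(\X,\Y)$ gives that $\X$ is contravariantly finite in $\Mod{\Lambda_{(0,0)}}$ and closed under quotients, extensions and coproducts, which is (i); the torsion pair $(\Y,\Z)$ gives that $\Z$ is covariantly finite and closed under subobjects, extensions and products, which is (iii). The \textsf{TTF}-triple $(\X',\Y',\Z')$ yields (ii) and (iv) in exactly the same way.

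It then remains to check the four inclusions. Since $\phi=0=\psi$, the maps $\Psi_X$ and $\Phi_Y$ — being the composites through $\psi\otimes\iden_X$ and $\phi\otimes\iden_Y$ — vanish for every $X$ and $Y$. Hence $\mt_A(X)=(X,M\otimes_A X,\iden_{M\otimes_A X},0)$, whose first structure map is an epimorphism, so $\mt_A(\Mod{A})\subseteq\X$; symmetrically $\mt_B(Y)=(N\otimes_B Y,Y,0,\iden_{N\otimes_B Y})\in\X'$. For $\mh_A$, the vanishing of $\Psi_X$ kills the $f$-component, so $\mh_A(X)=(X,\Hom_A(N,X),0,\epsilon'_X)$, and the triangular identity for the adjunction $(N\otimes_B-,\Hom_A(N,-))$ gives $\rho(\epsilon'_X)=\iden_{\Hom_A(N,X)}$, a monomorphism, whence $\mh_A(\Mod{A})\subseteq\Z$; symmetrically $\pi(\epsilon_Y)=\iden_{\Hom_B(M,Y)}$ and $\mh_B(\Mod{B})\subseteq\Z'$. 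Everything here is routine; the only place one must pause is to identify $\rho(\epsilon'_X)$ and $\pi(\epsilon_Y)$ with identity morphisms, which is immediate from the defining property of the counit once the hypothesis $\phi=\psi=0$ has collapsed the structure maps of $\mh_A$ and $\mh_B$.
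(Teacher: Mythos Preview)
Your proof is correct and follows exactly the route the paper intends: the corollary is stated there without proof, merely as ``a consequence of Proposition~\ref{ttf-triple} and \cite{AR:homological}'', and you have faithfully unpacked that consequence. The closure and approximation properties come straight from the torsion-pair machinery, and your verification of the four inclusions via the vanishing of $\Psi_X$, $\Phi_Y$ and the triangular identities is the natural (and only reasonable) way to finish.
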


The next result describes the categories of modules over $A$ and $B$ via the subcategories $\X$, $\Z$, $\X'$, $\Z'$.

\begin{cor}
Let $\Lambda_{(0,0)}$ be a Morita ring. Then there is an equivalence
\[
\xymatrix@C=0.5cm{
  \Mod{A} \ar[r]^{\simeq} & \X\cap \Z } \ \ \ \text{and} \ \ \ \xymatrix@C=0.5cm{
  \Mod{B} \ar[r]^{\simeq \ } & \X'\cap \Z'}
\]  
\begin{proof}
From Proposition \ref{recollement} we have the recollement $(\Mod{B},\Mod{\Lambda_{(0,0)}},\Mod{A})$ and so the quotient category $\Mod{\Lambda_{(0,0)}}/\Mod{B}$ is equivalent with $\Mod{A}$. From Proposition \ref{ttf-triple} we have the \textsf{TTF}-triple $(\X,\Y,\Z)$ in $\Mod{\Lambda_{(0,0)}}$. Then from \cite[Proposition $1.3$]{BR} it follows that $\Mod{\Lambda_{(0,0)}}/\Y\simeq \X\cap \Z$. But since we can identify $\Y$ with $\Mod{B}$ it follows that $\Mod{A}$ is equivalent with $\X\cap \Z$. Similarly using the \textsf{TTF}-triple $(\X',\Y',\Z')$ and the recollement $(\Mod{A},\Mod{\Lambda_{(0,0)}},\Mod{B})$ we infer that $\Mod{B}$ is equivalent with $\X'\cap \Z'$.  
\end{proof}
\end{cor}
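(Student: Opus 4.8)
The plan is to prove the first equivalence $\Mod{A}\simeq\X\cap\Z$ directly and then to deduce the second, $\Mod{B}\simeq\X'\cap\Z'$, by the symmetric argument interchanging the roles of $A$ and $B$, of $M$ and $N$, and of the two \textsf{TTF}-triples of \propref{ttf-triple}. For the first equivalence I would start by unwinding the defining conditions of the two subcategories: if $(X,Y,f,g)$ lies in $\X$ then $g=0$ and $f\colon M\otimes_AX\lxr Y$ is an epimorphism, whereas if it lies in $\Z$ then $f=0$. Hence for an object of $\X\cap\Z$ the structure map $f$ is at once zero and an epimorphism, which forces $Y=0$, so every object of $\X\cap\Z$ has the shape $(X,0,0,0)=\mz_A(X)$. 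Conversely $\mz_A(X)=(X,0,0,0)$ always lies in $\X$, since the zero map $M\otimes_AX\lxr 0$ is an epimorphism, and in $\Z$, since the zero map $0\lxr\Hom_A(N,X)$ is a monomorphism; thus $\Image{\mz_A}\subseteq\X\cap\Z$. So $\X\cap\Z=\Image{\mz_A}$, and since $\mz_A$ is fully faithful (cf. \propref{recollement}) it restricts to an equivalence $\Mod{A}\simeq\X\cap\Z$.

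For the second equivalence the same bookkeeping applies with primes: membership in $\X'$ forces $f=0$ and $g\colon N\otimes_BY\lxr X$ epic, membership in $\Z'$ forces $g=0$, so an object of $\X'\cap\Z'$ has $X=0$, whence $\X'\cap\Z'=\Image{\mz_B}\simeq\Mod{B}$ through the fully faithful functor $\mz_B$. Alternatively, and more in the spirit of this section, one may argue conceptually: by \propref{ttf-triple} the triple $(\X,\Y,\Z)$ is a \textsf{TTF}-triple, hence $\Y$ is a \textsf{TTF}-class and in particular a Serre subcategory, and the general theory of \textsf{TTF}-triples, \cite[Proposition $1.3$]{BR}, says that the composite $\X\cap\Z\hookrightarrow\Mod{\Lambda_{(0,0)}}\lxr\Mod{\Lambda_{(0,0)}}/\Y$ with the Serre quotient functor is an equivalence. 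Combining this with the recollement $(\Mod{B},\Mod{\Lambda_{(0,0)}},\Mod{A})$ of \propref{recollement}, which identifies $\Y=\Image{\mz_B}$ with $\Mod{B}$ and $\Mod{\Lambda_{(0,0)}}/\Y$ with $\Mod{A}$, one gets $\Mod{A}\simeq\X\cap\Z$; the symmetric argument with $(\X',\Y',\Z')$ and the recollement $(\Mod{A},\Mod{\Lambda_{(0,0)}},\Mod{B})$ gives $\Mod{B}\simeq\X'\cap\Z'$.

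I do not anticipate a real obstacle along the direct route: the only point needing care is keeping track of which of the two structure maps $f,g$ is forced to vanish in each of $\X,\Y,\Z,\X',\Y',\Z'$, and then observing that a map which is simultaneously zero and epic (respectively monic) pins down the relevant component of the tuple. Should one prefer the conceptual route, the only nontrivial ingredient is \cite[Proposition $1.3$]{BR}, whose delicate part is the full faithfulness of $\X\cap\Z\hookrightarrow\Mod{\Lambda_{(0,0)}}/\Y$; that has to be extracted from the orthogonality relations $\X={}^{\bot}\Y$, $\Z=\Y^{\bot}$ and the Serre property of $\Y$, and is entirely standard.
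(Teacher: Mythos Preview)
Your proposal is correct, and in fact you offer two complete arguments. Your second, ``conceptual'' route is exactly the paper's proof: invoke the \textsf{TTF}-triple $(\X,\Y,\Z)$ from \propref{ttf-triple}, appeal to \cite[Proposition~1.3]{BR} to get $\Mod{\Lambda_{(0,0)}}/\Y\simeq\X\cap\Z$, and then use the recollement of \propref{recollement} to identify $\Y$ with $\Mod{B}$ and the quotient with $\Mod{A}$.

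Your first, direct route is genuinely different and more elementary: by simply reading off the definitions you show $\X\cap\Z=\Image{\mz_A}$ on the nose, and conclude via the full faithfulness of $\mz_A$. This avoids the external input \cite[Proposition~1.3]{BR} entirely and makes the equivalence completely explicit (it is induced by $\mz_A$). The paper's approach, by contrast, situates the result within the general framework of \textsf{TTF}-triples and Serre quotients, which is more in keeping with the surrounding section and explains \emph{why} one should expect such an equivalence, but at the cost of quoting a result whose proof is not reproduced here. Both are fine; your direct computation is arguably the cleaner way to establish this particular corollary.
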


We denote by ${\X}_0$  the full subcategory of $\Mod{\Lambda_{(0,0)}}$ whose objects are the tuples $(X,Y,f,g)$ such that there is an exact sequence $0\lxr K_0 \lxr
\mt_A(P_0)\lxr (X,Y,f,g)\lxr 0$ with $P_0\in \Proj{A}$. Similarly we define the subcategories ${\Y}_0=\{(X,Y,f,g)\in \Mod{\Lambda_{(0,0)}} \ | \  \exists
\ 0 \lxr (X,Y,f,g) \lxr
\mh_A(I_0)\lxr L_0 \lxr 0 \ \text{exact with} \ I_0\in
\Inj{A} \}$, ${\X}'_0=\{(X,Y,f,g)\in \Mod{\Lambda_{(0,0)}} \ | \ \exists \ 0\lxr K_0 \lxr \mt_B(Q_0)\lxr (X,Y,f,g)\lxr 0 \ \text{exact with} \ Q_0\in \Proj{B} \}$ and ${\Y}'_0=\{(X,Y,f,g)\in \Mod{\Lambda_{(0,0)}} \ | \  \exists
\ 0 \lxr (X,Y,f,g) \lxr \mh_B(J_0)\lxr L_0 \lxr 0 \ \text{exact with} \ J_0\in
\Inj{B} \}$. 

The reason for defining the above subcategories is the following result which provides another description of the subcategories $\X,\Z,\X'$ and $\Z'$.

\begin{prop}
Let $\Lambda_{(0,0)}$ be a Morita ring. Then$\colon$
$\X=\X_0$, \ $\Z=\Y_0$, \ $\X'=\X'_0$ and $\Z'=\Y'_0$.
\begin{proof}
Let $(X,Y,f,0)\in \X$ and let $a\colon P\lxr X$ be an epimorphism with $P\in \Proj{A}$. Then we have the morphism $(a,Fa\circ f)\colon \mt_A(P)\lxr (X,Y,f,0)$ in $\Mod{\Lambda_{(0,0)}}$ which is an epimorphism since $f\colon M\otimes_AX\lxr Y$ is an epimorphism. Hence $(X,Y,f,0)\in \X_0$. Conversely, let $(X,Y,f,g)\in \X_0$. Since there exists an epimorphism $(a,b)\colon \mt_A(P)\lxr (X,Y,f,g)$ it follows that $b=Fa\circ f$ and $b$ is an epimorphism. Hence the map $f$ is an epimorphism and from Remark \ref{mapzero} it follows that the map $g=0$. Thus the object $(X,Y,f,0)\in \X$. Therefore we have $\X=\X_0$ and similarly we prove the other statements.   
\end{proof}
\end{prop}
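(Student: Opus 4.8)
The plan is to prove $\X=\X_0$ in full and then to obtain $\Z=\Y_0$, $\X'=\X'_0$ and $\Z'=\Y'_0$ by duality and symmetry. Two standing facts about $\M(\Lambda)\simeq\Mod{\Lambda_{(0,0)}}$ will be used repeatedly: first, since $\mU_A$ and $\mU_B$ are exact and jointly detect the zero object, a morphism $(a,b)$ is an epimorphism (resp.\ a monomorphism) precisely when both $a$ and $b$ are; second, since $\phi=0=\psi$ we have $\Psi_X=0$ and $\Phi_Y=0$ for all $X$ and $Y$, so in particular $\mt_A(P)=(P,M\otimes_AP,\iden_{M\otimes_AP},0)$ and $\mh_A(I)=(I,\Hom_A(N,I),0,\epsilon'_I)$.

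For $\X\subseteq\X_0$, take $(X,Y,f,0)\in\X$ and choose an epimorphism $a\colon P_0\epic X$ with $P_0\in\Proj{A}$ (e.g.\ $P_0$ free). Composing $\mt_A(a)=(a,M\otimes_Aa)$ with the counit $(\iden_X,f)\colon\mt_A(X)\lxr(X,Y,f,0)$ of the adjoint pair $(\mt_A,\mU_A)$ — the counit already identified in the proof of Proposition~\ref{ttf-triple} — yields a morphism $(a,(M\otimes_Aa)\circ f)\colon\mt_A(P_0)\lxr(X,Y,f,0)$. Its first component is epic by construction, and its second component is epic because $M\otimes_Aa$ is epic (right exactness of $M\otimes_A-$) and $f$ is epic by hypothesis; hence this morphism is an epimorphism, and the associated kernel sequence $0\lxr K_0\lxr\mt_A(P_0)\lxr(X,Y,f,0)\lxr0$ shows $(X,Y,f,0)\in\X_0$.

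For $\X_0\subseteq\X$, let $(X,Y,f,g)\in\X_0$ and fix an epimorphism $(a,b)\colon\mt_A(P_0)\epic(X,Y,f,g)$ with $P_0\in\Proj{A}$. Since the $Y$-component of $\mt_A(P_0)$ is $M\otimes_AP_0$ equipped with the identity structure map, the first commutativity square in the definition of a morphism of $\M(\Lambda)$ forces $b=(M\otimes_Aa)\circ f$. Because $b$ is epic and $M\otimes_Aa$ is epic, $f$ is epic; Remark~\ref{mapzero} then forces $g=0$, so $(X,Y,f,g)=(X,Y,f,0)\in\X$. This establishes $\X=\X_0$.

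The identity $\Z=\Y_0$ is the formal dual: one uses the unit $(\iden_X,\rho(g))\colon(X,Y,f,g)\lxr\mh_A(X)$ of $(\mU_A,\mh_A)$ in place of the counit of $(\mt_A,\mU_A)$, an embedding of $X$ into an injective $A$-module $I_0$ in place of the projective cover, and left exactness of $\Hom_A(N,-)$ in place of right exactness of $M\otimes_A-$. Concretely, a monomorphism $(c,d)\colon(X,Y,f,g)\lxr\mh_A(I_0)$ satisfies $f\circ d=0$ since the $f$-component of $\mh_A(I_0)$ vanishes, whence $d$ monic gives $f=0$; and the adjoint pair $(\mU_A,\mh_A)$ identifies $(c,d)$ with $(c,\rho(g)\circ\Hom_A(N,c))$, so its second component being monic forces $\rho(g)$ monic, placing $(X,Y,0,g)$ in $\Z$. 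Finally $\X'=\X'_0$ and $\Z'=\Y'_0$ follow from $\X=\X_0$ and $\Z=\Y_0$ upon interchanging the roles of $A,M$ with $B,N$ and using the adjoint pairs $(\mt_B,\mU_B)$ and $(\mU_B,\mh_B)$. The only delicate point throughout is the bookkeeping of the commutativity squares defining morphisms of $\M(\Lambda)$ together with the explicit form of the above units and counits; once that is in place, everything reduces to the elementary observation that in a composite $\alpha\circ\beta$ of morphisms the factor $\beta$ is epic whenever $\alpha\circ\beta$ is, and the factor $\alpha$ is monic whenever $\alpha\circ\beta$ is.
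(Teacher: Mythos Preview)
Your proof is correct and follows essentially the same approach as the paper's: for $\X=\X_0$ you use the counit $(\iden_X,f)$ of $(\mt_A,\mU_A)$ composed with $\mt_A(a)$ for a projective cover $a\colon P_0\twoheadrightarrow X$, then invoke Remark~\ref{mapzero} for the reverse inclusion, exactly as the paper does; you simply spell out more of the bookkeeping (the form of $\mt_A(P)$ and $\mh_A(I)$ when $\phi=\psi=0$, the unit/counit language, and the details of the dual $\Z=\Y_0$) where the paper says ``similarly''. One minor remark: in the step ``$b$ is epic and $M\otimes_Aa$ is epic, hence $f$ is epic'', the epimorphy of $M\otimes_Aa$ is not needed---$b=(M\otimes_Aa)\circ f$ epic already forces $f$ epic, as you yourself note at the end.
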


For the next result we need to recall the notion of bireflective subcategories. A full subcategory $\C$ of an abelian category $\B$ is said to be \textsf{reflective} if the inclusion functor $\mathsf{i}\colon \C\lxr \B$ has a left adjoint. Dually the subcategory $\C$ is called \textsf{coreflective} if $\mathsf{i}\colon \C\lxr \B$ has a right adjoint.  
Then $\C$ is \textsf{bireflective} if it is both reflective and coreflective.  
We refer to \cite{Gabriel} and \cite{Geigle-Lenzing} for more information about bireflective subcategories.

The following gives the exact properties of the natural subcategories of $\Mod{\Lambda_{(0,0)}}$.

\begin{thm}\label{bireflective}
Let $\Lambda_{(0,0)}$ be a Morita ring. Then the full subcategories
\[
\Mod{A}, \ \Mod{B}, \ \Mod{\bigl(\begin{smallmatrix}
A & 0 \\
_BM_A & B
\end{smallmatrix}\bigr)}, \ \Mod{\bigl(\begin{smallmatrix}
A & _AN_B \\
0 & B
\end{smallmatrix}\bigr)}
\]
are bireflective in $\Mod{\Lambda_{(0,0)}}$. In particular the above subcategories are functorially finite in $\Mod{\Lambda_{(0,0)}}$, closed under isomorphic images, direct sums, direct products, kernels and cokernels.
\begin{proof}
Since $\phi=\psi=0$ it follows that the maps $\theta_1\colon \Lambda_{(0,0)}\lxr A$, $\theta_1\big(\bigl(\begin{smallmatrix}
a & n \\
m & b
\end{smallmatrix}\bigr) \big)=a$, $\theta_2\colon \Lambda_{(0,0)}\lxr B$, $\theta_2\big(\bigl(\begin{smallmatrix}
a & n \\
m & b
\end{smallmatrix}\bigr) \big)=b$, $\theta_3\colon \Lambda_{(0,0)}\lxr \bigl(\begin{smallmatrix}
A & 0 \\
_BM_A & B
\end{smallmatrix}\bigr)$, $\theta_3\big(\bigl(\begin{smallmatrix}
a & n \\
m & b
\end{smallmatrix}\bigr) \big)=\bigl(\begin{smallmatrix}
a & 0 \\
m & b
\end{smallmatrix}\bigr)$ and $\theta_4\colon \Lambda_{(0,0)}\lxr \bigl(\begin{smallmatrix}
A & _AN_B \\
0 & B
\end{smallmatrix}\bigr)$, $\theta_4\big(\bigl(\begin{smallmatrix}
a & n \\
m & b
\end{smallmatrix}\bigr) \big)=\bigl(\begin{smallmatrix}
a & n \\
0 & b
\end{smallmatrix}\bigr)$ are surjective ring homomorphisms. Therefore the above maps are ring epimorphisms, i.e. epimorphisms in the category of rings. Hence from \cite{Gabriel}, see also \cite{Geigle-Lenzing}, we infer that the essential images of the restriction functors $\Mod{A}\lxr \Mod{\Lambda_{(0,0)}}$, $\Mod{B}\lxr \Mod{\Lambda_{(0,0)}}$, $\Mod{\bigl(\begin{smallmatrix}
A & 0 \\
_BM_A & B
\end{smallmatrix}\bigr)}\lxr \Mod{\Lambda_{(0,0)}}$ and $\Mod{\bigl(\begin{smallmatrix}
A & _AN_B \\
0 & B
\end{smallmatrix}\bigr)}\lxr \Mod{\Lambda_{(0,0)}}$ are bireflective subcategories.
Then from \cite{Gabriel}, \cite{Geigle-Lenzing} it follows that the full subcategories $\Mod{A}, \Mod{B}, \Mod{\bigl(\begin{smallmatrix}
A & 0 \\
_BM_A & B
\end{smallmatrix}\bigr)}, \Mod{\bigl(\begin{smallmatrix}
A & _AN_B \\
0 & B
\end{smallmatrix}\bigr)}$ are functorially finite in $\Mod{\Lambda_{(0,0)}}$, closed under isomorphic images, direct sums, direct products, kernels and cokernels. For completeness we describe the approximations of the $\Lambda_{(0,0)}$-modules. Let $(X,Y,f,g)$ be a $\Lambda_{(0,0)}$-module. Recall that we have the full embedding $\mz_A\colon \Mod{A}\lxr \Mod{\Lambda}_{(0,0)}$ sending an $A$-module $X$ to the tuple $(X,0,0,0)$. Then the map $(X,Y,f,g)\lxr (\Coker{g},0,0,0)$ is the unique left $\Image{\mz_A}$-approximation. Let $\F\colon \Mod{\bigl(\begin{smallmatrix}
A & 0 \\
_BM_A & B
\end{smallmatrix}\bigr)}\lxr \Mod{\Lambda_{(0,0)}}$ be the functor defined on the objects $(X,Y,f)\in \Mod{\bigl(\begin{smallmatrix}
A & 0 \\
_BM_A & B
\end{smallmatrix}\bigr)}$ by $\F(X,Y,f)=(X,Y,f,0)$ and given a morphism $(a,b)\colon (X,Y,f)\lxr (X',Y',f')$ in $\Mod{\bigl(\begin{smallmatrix}
A & 0 \\
_BM_A & B
\end{smallmatrix}\bigr)}$ then $\F(a,b)=(a,b)\colon (X,Y,f,0)\lxr (X',Y',f',0)$ is a morphism in $\Mod{\Lambda_{(0,0)}}$. 
We denote by $\U=\Image{\F}=\{(X,Y,f,0) \ | \ (X,Y,f)\in \Mod{\bigl(\begin{smallmatrix}
A & 0 \\
_BM_A & B
\end{smallmatrix}\bigr)}\}$. Let $(X,Y,f,g)$ be a $\Lambda_{(0,0)}$-module. Since $\Phi_Y=0$ we have the following commutative diagram$\colon$
\[
\xymatrix{
 M\otimes_AN\otimes_BY \ar[rrd]_{0=\Phi_Y} \ar[rr]^{M\otimes g} && M\otimes_AX  \ar[d]_{f} \ar[rr]^{M\otimes \pi_X \ \ \ } &&
M\otimes_A \Coker{g} \ar@{-->}[lld]^{h} \ar[r]^{}                  &  0    \\
          &   & Y  &    &  &      }
\]
Consider the object $(\Coker{g},Y,h,0)\in \W$. Then the morphism $(\pi_X,\iden_Y)\colon (X,Y,f,g)\lxr (\Coker{g},Y,h,0)$ is the unique left $\U$-approximation. Similarly we obtain the descriptions of the left approximations from $\Mod{B}$ and $\Mod{\bigl(\begin{smallmatrix}
A & _AN_B \\
0 & B
\end{smallmatrix}\bigr)}
$, and dually we can derive the right approximations. 
\end{proof}
\end{thm}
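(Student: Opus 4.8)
The plan is to realize each of the four subcategories as the essential image of a restriction-of-scalars functor along a ring epimorphism out of $\Lambda_{(0,0)}$, and then to appeal to the standard correspondence between ring epimorphisms and bireflective subcategories.

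First I would use the hypothesis $\phi=\psi=0$, which collapses the multiplication in $\Lambda_{(0,0)}$ to
\[
\begin{pmatrix} a & n \\ m & b \end{pmatrix}\cdot\begin{pmatrix} a' & n' \\ m' & b' \end{pmatrix}=\begin{pmatrix} aa' & an'+nb' \\ ma'+bm' & bb' \end{pmatrix}.
\]
With this formula a direct verification shows that the four maps $\theta_1,\dots,\theta_4$ appearing in the statement --- the two diagonal projections onto $A$ and onto $B$, the projection onto $\bigl(\begin{smallmatrix} A & 0 \\ {}_BM_A & B\end{smallmatrix}\bigr)$ killing the $N$-entry, and the projection onto $\bigl(\begin{smallmatrix} A & {}_AN_B \\ 0 & B\end{smallmatrix}\bigr)$ killing the $M$-entry --- are well defined surjective ring homomorphisms; the vanishing of $\phi$ and $\psi$ is precisely what makes deleting an off-diagonal entry compatible with the product. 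A surjection of rings is in particular an epimorphism in the category of rings.

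Next I would invoke the classical result of Gabriel, in the form given by Geigle--Lenzing \cite{Gabriel}, \cite{Geigle-Lenzing}: for a ring epimorphism $\theta\colon R\lxr S$ the restriction functor $\Mod S\lxr \Mod R$ is fully faithful and its essential image is a bireflective subcategory of $\Mod R$; moreover any bireflective subcategory is functorially finite (the reflection unit serving as a left approximation and the coreflection counit as a right approximation) and, as the essential image of the exact and fully faithful restriction functor, is closed under isomorphisms, arbitrary direct sums and products, kernels and cokernels. Applying this to $\theta_1,\dots,\theta_4$ gives all the assertions of the theorem, once one identifies the four essential images with the desired subcategories of $\M(\Lambda)$: restriction along $\theta_3$ sends a module $(X,Y,f)$ over $\bigl(\begin{smallmatrix} A & 0 \\ {}_BM_A & B\end{smallmatrix}\bigr)$ to the tuple $(X,Y,f,0)$, so its image is $\{(X,Y,f,0)\}$; dually $\theta_4$ gives $\{(X,Y,0,g)\}$, while $\theta_1$ gives $\{(X,0,0,0)\}=\Image{\mz_A}$ and $\theta_2$ gives $\{(0,Y,0,0)\}=\Image{\mz_B}$.

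Finally, although it is not logically necessary, I would record the approximations explicitly, since the formulas are short and instructive: for $\mz_A\colon\Mod A\lxr\Mod{\Lambda_{(0,0)}}$ the morphism $(X,Y,f,g)\lxr(\Coker g,0,0,0)$ is the universal left approximation, and for the embedding of $\Mod{\bigl(\begin{smallmatrix} A & 0 \\ {}_BM_A & B\end{smallmatrix}\bigr)}$ one factors out $\Image g$ in the first coordinate to obtain $(X,Y,f,g)\lxr(\Coker g,Y,h,0)$, where $h$ is the morphism induced by $f$ using $\Phi_Y=0$; the two remaining cases are dual, as are the right approximations. The only real labor is bookkeeping --- checking the multiplicativity of the $\theta_i$ and matching the module categories over the triangular rings $\bigl(\begin{smallmatrix} A & 0 \\ {}_BM_A & B\end{smallmatrix}\bigr)$ and $\bigl(\begin{smallmatrix} A & {}_AN_B \\ 0 & B\end{smallmatrix}\bigr)$, whose objects are triples $(X,Y,f)$ and $(X,Y,g)$ respectively, with the subcategories of $\M(\Lambda)$ cut out by $g=0$ and $f=0$ --- and I anticipate no genuine obstacle.
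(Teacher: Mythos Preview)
Your proposal is correct and follows essentially the same approach as the paper: you exhibit the four surjective ring homomorphisms $\theta_1,\dots,\theta_4$ (using $\phi=\psi=0$), invoke the Gabriel/Geigle--Lenzing correspondence between ring epimorphisms and bireflective subcategories, and then describe the explicit approximations. The paper's proof is virtually identical in structure and detail, including the same formulas for the left approximations.
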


\begin{rem}
\begin{enumerate}
\item By Proposition \ref{ttf-triple} the categories $\Mod{A}$ and $\Mod{B}$ are \textsf{TTF}-classes. This implies that they are bireflective in $\Mod{\Lambda_{(0,0)}}$, but note that this is clear also from the recollements diagrams of Proposition \ref{recollement}.  

\item For every $\Lambda_{(0,0)}$-module $(X,Y,f,g)$ the assignment $(X,Y,f,g)\mapsto (\Coker{g},Y,h)$ induces a functor $\G\colon \Mod{\Lambda_{(0,0)}}\lxr \Mod{\bigl(\begin{smallmatrix}
A & 0 \\
_BM_A & B
\end{smallmatrix}\bigr)}$ which is the left adjoint of the functor $\F\colon \Mod{\bigl(\begin{smallmatrix}
A & 0 \\
_BM_A & B
\end{smallmatrix}\bigr)}\lxr \Mod{\Lambda_{(0,0)}}$ in the proof of Theorem $4.6$. Similarly we obtain the right adjoint of $\F$ and in the same way we get the adjoints of the full embedding $\Mod{\bigl(\begin{smallmatrix}
A & _AN_B \\
0 & B
\end{smallmatrix}\bigr)}\lxr \Mod{\Lambda_{(0,0)}}$. 
\end{enumerate}
\end{rem}

We continue with the following result on finiteness of subcategories.

\begin{thm}
Let $\Lambda_{(0,0)}$ be a Morita ring. 
\begin{enumerate}
\item Let $\U$ be a covariantly finite subcategory of $\Mod{A}$ such that $\U\subseteq \Ker{\Hom_{A}(N,-)}$ and $\V$ a covariantly finite subcategory of $\Mod{B}$ such that $\V\subseteq \Ker{N\otimes_B-}$. Then the full subcategory
\[
\W=\big\{(X,Y,f,g)\in \Mod{\Lambda_{(0,0)}} \ | \ X\in \U \ \ \text{and} \ \ Y\in \V \big\}
\]
is covariantly finite in $\Mod{\Lambda_{(0,0)}}$.

\item Let $\U$ be a contravariantly finite subcategory of $\Mod{A}$ such that $\U\subseteq \Ker{\Hom_{A}(N,-)}$ and $\V$ a contravariantly finite subcategory of $\Mod{B}$ such that $\V\subseteq \Ker{N\otimes_B-}$. Then the full subcategory
\[
\W=\big\{(X,Y,f,g)\in \Mod{\Lambda_{(0,0)}} \ | \ X\in \U \ \ \text{and} \ \ Y\in \V \big\}
\]
is contravariantly finite in $\Mod{\Lambda_{(0,0)}}$.

\item Let $\U$ be a functorially finite subcategory of $\Mod{A}$ such that $\U\subseteq \Ker{\Hom_{A}(N,-)}$ and $\V$ a functorially finite subcategory of $\Mod{B}$ such that $\V\subseteq \Ker{N\otimes_B-}$. Then the full subcategory
\[
\W=\big\{(X,Y,f,g)\in \Mod{\Lambda_{(0,0)}} \ | \ X\in \U \ \ \text{and} \ \ Y\in \V \big\}
\]
is functorially finite in $\Mod{\Lambda_{(0,0)}}$.
\end{enumerate}
\begin{proof}
(i) Let $(A_1,B_1,f_1,g_1)$ be an arbitrary $\Lambda_{(0,0)}$-module and let $m\colon A_1\lxr X_1$ be a left $\U$-approximation. From the morphisms $M\otimes_Am$ and $f_1$ we have the following pushout diagram$\colon$
\[
\xymatrix{
      M\otimes_A{A_1} \ar[rr]^{M\otimes m}   \ar[d]_{f_1}    &&  M\otimes_AX_1 \ar[d]^{\rho}         \\
 B_1 \ar[rr]^{\theta}  &&  I          }
\]
and let $n\colon I\lxr Y_1$ be a left $\V$-approximation. Then we claim that the map
\[
\xymatrix{
  (A_1,B_1,f_1,g_1) \ar[rr]^{(m,\theta\circ n)}    &&  (X_1,Y_1,\rho\circ n,0) }
\]
is a left $\W$-approximation. First the object $(X_1,Y_1,\rho\circ n,0)\in \W$ since it is a $\Lambda_{(0,0)}$-module with $X_1\in \U$ and $Y_1\in \V$. Also from the above pushout diagram and since $\Hom_A(N\otimes_BB_1,X_1)\simeq \Hom_B(B_1,\Hom_A(N,X_1))=0$ it follows that the following diagrams are commutative$\colon$
\[
\xymatrix{
  M\otimes_A A_1 \ar[d]_{M\otimes m} \ar[r]^{ \ \ \ f_1} & B_1 \ar[d]^{\theta\circ n}     \\
  M\otimes_A X_1    \ar[r]^{ \ \ \ \rho\circ n} & Y_1                  } \ \ \ \ \ \ \  \ \ \  \xymatrix{
  N\otimes_B B_1 \ar[d]_{N\otimes (\theta\circ n)} \ar[r]^{ \ \ g_1} &  A_1 \ar[d]^{m}     \\
  N\otimes_B Y_1    \ar[r]^{ \ \ 0} & X_1                  }
\]
Thus the map $(m,\theta\circ n)$ is a morphism in $\Mod{\Lambda_{(0,0)}}$. Let $(\alpha,\beta)\colon (A_1,B_1,f_1,g_1)\lxr (X,Y,f,g)$ be a morphism in $\Mod{\Lambda_{(0,0)}}$ with $(X,Y,f,g)\in \W$. Since $m\colon A_1\lxr X_1$ is a left $\U$-approximation and $X\in \U$ there exists a map $\gamma\colon X_1\lxr X$ such that $m\circ \gamma=\alpha$. Morover since $f_1\circ \beta=(M\otimes_A a)\circ f$ there exists a map $\mu\colon I\lxr Y$ such that the following diagram$\colon$
\[
\xymatrix{
     M\otimes_AA_1 \ar[rr]^{M\otimes m}   \ar[d]_{f_1} && M\otimes_AX_1 \ar[d]^{\rho} \ar @/^1.5pc/[ddr]^{(M\otimes \gamma)\circ f}  &        \\
 B_1 \ar[rr]^{\theta} \ar @/_1.5pc/[drrr]^{\beta}  &&  I  \ar@{-->}[dr]^{\mu} &  \\
    &&   &  Y   }
\]
is commutative. Then since the map $n\colon I\lxr Y_1$ is a left $\V$-approximation there exists a morphism $\delta\colon Y_1\lxr Y$ such that $n\circ \delta=\mu$. 
This implies that the following diagram is commutative$\colon$
\[
\xymatrix{
   (A_1,B_1,f_1,g_1)  \ar[d]_{(\alpha,\beta)} \ar[rr]^{ (m,\theta\circ n) \ }  &&      (X_1,Y_1,\rho\circ n,0) \ar[dll]^{(\gamma,\delta)}      \\
 (X,Y,f,g)  &&           }
\] 
Since $\rho\circ n\circ \delta=\rho\circ \mu=(M\otimes_A\gamma)\circ f$ and $N\otimes_BY_1=0$ we infer that $(\gamma,\delta)$ is a morphism in $\Mod{\Lambda_{(0,0)}}$ and therefore we have proved our claim. Part (ii) is dual to (i) and (iii) follows from (i) and (ii).
\end{proof}
\end{thm}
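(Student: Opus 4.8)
The plan is to prove (i) by exhibiting an explicit left $\W$-approximation of an arbitrary $\Lambda_{(0,0)}$-module, to obtain (ii) as the formal dual, and to deduce (iii) by combining them. First I would unwind the hypotheses: if $X\in\U$ then $\Hom_{A}(N,X)=0$, and if $Y\in\V$ then $N\otimes_{B}Y=0$. Hence for any $(X,Y,f,g)\in\W$ we have $N\otimes_{B}Y=0$, so the structure map $g\colon N\otimes_{B}Y\lxr X$ is forced to be zero, and since $\phi=\psi=0$ the maps $\Psi_{X}$ and $\Phi_{Y}$ vanish as well. Thus an object of $\W$ is just a triple $(X,Y,f)$ with $X\in\U$, $Y\in\V$ and $f\colon M\otimes_{A}X\lxr Y$ arbitrary, and conversely every such triple is a legitimate $\Lambda_{(0,0)}$-module lying in $\W$. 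This structural remark is what drives the construction.

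For (i), let $(A_{1},B_{1},f_{1},g_{1})$ be a $\Lambda_{(0,0)}$-module. I would proceed in three steps: take a left $\U$-approximation $m\colon A_{1}\lxr X_{1}$; form the pushout of $M\otimes_{A}m$ along $f_{1}$, obtaining $I$ together with $\rho\colon M\otimes_{A}X_{1}\lxr I$ and $\theta\colon B_{1}\lxr I$; and take a left $\V$-approximation $n\colon I\lxr Y_{1}$. The candidate is $(m,\theta\circ n)\colon (A_{1},B_{1},f_{1},g_{1})\lxr (X_{1},Y_{1},\rho\circ n,0)$. One must then check three things: that the target lies in $\W$ (clear, since $X_{1}\in\U$ and $Y_{1}\in\V$ by construction); that $(m,\theta\circ n)$ is a morphism of $\Lambda_{(0,0)}$-modules, the square involving $M$ commuting by the defining pushout relation and the square involving $N$ commuting because $g_{1}\circ m$ lies in $\Hom_{A}(N\otimes_{B}B_{1},X_{1})\simeq\Hom_{B}(B_{1},\Hom_{A}(N,X_{1}))=0$; and the universal property — given $(\alpha,\beta)\colon (A_{1},B_{1},f_{1},g_{1})\lxr (X,Y,f,g)$ with target in $\W$, factor $\alpha=m\circ\gamma$ through the $\U$-approximation, observe that $\beta$ and $(M\otimes\gamma)\circ f$ form a cocone over the pushout span (this is exactly commutativity of the $M$-square for $(\alpha,\beta)$) and hence get a unique $\mu\colon I\lxr Y$, factor $\mu=n\circ\delta$ through the $\V$-approximation, and verify that $(\gamma,\delta)$ is a morphism of $\Lambda_{(0,0)}$-modules with $(m,\theta\circ n)\circ(\gamma,\delta)=(\alpha,\beta)$; here the $N$-square for $(\gamma,\delta)$ is automatic because $N\otimes_{B}Y_{1}=0$.

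Part (ii) is the formal dual. I would work in the equivalent category $\widetilde{\M}(\Lambda)$ (where objects of $\W$ are tuples $(X,Y,\kappa,0)$), take a right $\V$-approximation $Y_{1}\lxr B_{1}$, form the pullback of $\Hom_{B}(M,-)$ applied to this map along $\pi(f_{1})\colon A_{1}\lxr\Hom_{B}(M,B_{1})$, and take a right $\U$-approximation into this pullback; the verifications mirror those of (i), with the universal property of the pullback in place of that of the pushout and the vanishing $\Hom_{A}(N\otimes_{B}B_{1},X_{1})=0$ replaced by the analogous vanishing coming from $\V\subseteq\Ker{N\otimes_{B}-}$. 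Part (iii) is then immediate: when $\U$ and $\V$ are functorially finite, (i) and (ii) apply simultaneously.

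The step I expect to be the main obstacle is the universal property in (i): it is not enough that $\alpha$ and $\beta$ factor individually through the $\U$- and $\V$-approximations — one must produce a single pair $(\gamma,\delta)$ that is a morphism of $\Lambda_{(0,0)}$-modules, and this is precisely where the pushout (controlling how $\beta$ factors through the $B$-component) has to cooperate with the vanishing $N\otimes_{B}Y_{1}=0$ (which makes the second compatibility square collapse). A secondary technical point is to confirm that the auxiliary objects $I$, the pullback, and the approximation targets genuinely carry $\Lambda_{(0,0)}$-module structures; but this is automatic once one knows the relevant $\Hom$- or tensor-groups vanish, as noted in the first paragraph.
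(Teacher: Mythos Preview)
Your proposal is correct and follows essentially the same approach as the paper: the same left $\U$-approximation, the same pushout construction, the same left $\V$-approximation, the same candidate $(m,\theta\circ n)$, and the same verifications using the vanishing of $\Hom_{A}(N,X_{1})$ and of $N\otimes_{B}Y_{1}$. Your opening structural remark (that objects of $\W$ necessarily have $g=0$) and your more explicit description of the dual for (ii) via pullbacks in $\widetilde{\M}(\Lambda)$ are minor expository additions, but the argument is the paper's.
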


\begin{rem}
Note that the converse of Theorem $4.8$ holds, i.e. if $\W$ is contravariantly (resp. covariantly) finite in $\Mod{\Lambda_{(0,0)}}$ then $\U$ is contravariantly (resp. covariantly) finite in $\Mod{A}$ and $\V$ is contravariantly (resp. covariantly) finite in $\Mod{B}$. The proof is not difficult and is left to the reader. 
\end{rem}

If the bimodule $N=0$ then from Theorem $4.8$ we have the following well known result due to Smal{\o}.

\begin{cor}\cite[Theorem $2.1$]{triangular}
Let $\Lambda=\bigl(\begin{smallmatrix}
A & 0 \\
_BM_A & B
\end{smallmatrix}\bigr)$ be a triangular matrix ring, $\U$ a full subcategory of $\Mod{A}$, $\V$ a full subcategory of $\Mod{B}$ and let $\W=\big\{(X,Y,f)\in \Mod{\Lambda} \ | \ X\in \U \ \ \text{and} \ \ Y\in \V \big\}$.
\begin{enumerate}
\item The subcategory $\W$ is covariantly finite in $\Mod{\Lambda}$ if and only if $\U$ is covariantly finite in $\Mod{A}$ and $\V$ is covariantly finite in $\Mod{B}$.
 
\item The subcategory $\W$ is contravariantly finite in $\Mod{\Lambda}$ if and only if $\U$ is contravariantly finite in $\Mod{A}$ and $\V$ is contravariantly finite in $\Mod{B}$.

\item The subcategory $\W$ is functorially finite in $\Mod{\Lambda}$ if and only if $\U$ is functorially finite in $\Mod{A}$ and $\V$ is functorially finite in $\Mod{B}$.
\end{enumerate}
\end{cor}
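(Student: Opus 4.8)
The plan is to derive both implications of the Corollary from Theorem $4.8$ and Remark $4.9$, once it is observed that for the triangular ring the bimodule $N$ is $0$ and this makes all the side hypotheses of Theorem $4.8$ vacuous. First I would set up the reduction. The triangular matrix ring $\Lambda=\bigl(\begin{smallmatrix} A & 0 \\ {}_BM_A & B\end{smallmatrix}\bigr)$ is the Morita ring $\Lambda_{(0,0)}$ in which $N$ is taken to be $0$; for such a ring every object $(X,Y,f,g)$ of $\M(\Lambda)$ has $g\in\Hom_A(N\otimes_BY,X)=\Hom_A(0,X)=0$, so the datum $g$, and each commutativity condition involving it, is empty, and $\M(\Lambda)$ is identified with the usual category of triples $(X,Y,f)$ with $f\colon M\otimes_AX\lxr Y$ describing $\Mod{\Lambda}$. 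Under this identification the subcategory $\W$ of the present statement is exactly the subcategory $\W$ of Theorem $4.8$. Moreover $N=0$ forces $\Hom_A(N,-)=0$ and $N\otimes_B-=0$, whence $\Ker{\Hom_A(N,-)}=\Mod{A}$ and $\Ker{N\otimes_B-}=\Mod{B}$; so the conditions $\U\subseteq\Ker{\Hom_A(N,-)}$ and $\V\subseteq\Ker{N\otimes_B-}$ of Theorem $4.8$ hold automatically for \emph{every} full subcategory $\U\subseteq\Mod{A}$ and $\V\subseteq\Mod{B}$.

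Given the reduction, the ``if'' part of (i), (ii), (iii) is precisely Theorem $4.8$(i), (ii), (iii), and the ``only if'' part is precisely Remark $4.9$. Since the proof of that remark is only indicated, I would supply it as follows, treating the contravariant case (the covariant one is dual, and the functorial one is the conjunction of the other two). Given $X\in\Mod{A}$, regard it as the $\Lambda$-module $\mz_A(X)=(X,0,0)$ and choose a right $\W$-approximation $(a,b)\colon(A_1,B_1,h)\lxr(X,0,0)$ with $A_1\in\U$ and $B_1\in\V$; then $b=0$, and I claim $a\colon A_1\lxr X$ is a right $\U$-approximation of $X$. Indeed, for $U\in\U$ and an $A$-morphism $\alpha\colon U\lxr X$, the pair $(\alpha,0)\colon(U,0,0)\lxr(X,0,0)$ is a morphism of $\Lambda$-modules (its defining conditions are vacuous here) with $(U,0,0)\in\W$, so it factors through $(a,b)$; comparing first components gives $\beta_1\colon U\lxr A_1$ with $\beta_1 a=\alpha$, i.e.\ $\alpha$ factors through $a$. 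Realizing instead $Y\in\Mod{B}$ as $\mz_B(Y)=(0,Y,0)$ and repeating the argument shows that $\V$ is contravariantly finite in $\Mod{B}$; combining the ``if'' and ``only if'' halves for each of covariant, contravariant, and functorial finiteness then proves the Corollary.

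I do not expect a genuine obstacle: the mathematical content is carried entirely by Theorem $4.8$ and Remark $4.9$, and what remains is bookkeeping around the specialization $N=0$. The step deserving the most (still very modest) attention is that reduction itself --- checking that the module category of the triangular ring, the subcategory $\W$, and the vanishing of the two kernel conditions really line up with the hypotheses of Theorem $4.8$ --- together with, in the ``only if'' direction, the routine verifications that the test morphisms $(\alpha,0)$, $(0,\gamma)$, and their left-approximation analogues are genuine morphisms in $\Mod{\Lambda}$ and that composing them with the chosen $\W$-approximation recovers exactly the component one needs.
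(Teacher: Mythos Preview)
Your proposal is correct and follows exactly the paper's approach: the corollary is introduced with the sentence ``If the bimodule $N=0$ then from Theorem $4.8$ we have the following well known result due to Smal{\o}'', so the intended proof is precisely to specialize Theorem $4.8$ (for the ``if'' directions) and Remark $4.9$ (for the ``only if'' directions) to the case $N=0$, where the kernel conditions become vacuous. Your added verification of Remark $4.9$ is fine; note only that the test objects $(U,0,0)$ and $(0,Y,0)$ lie in $\W$ because the paper's standing convention that subcategories are closed under direct summands forces $0\in\U$ and $0\in\V$.
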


We continue with the following applications for Artin algebras. For the notion of Auslander-Reiten sequences we refer to \cite{ARS}.

\begin{cor}
Let $\Lambda_{(0,0)}$ be a Morita ring which is an Artin algebra. Then the full subcategories $\smod{A}$ and $\smod{B}$ of $\smod{\Lambda_{(0,0)}}$ have relative Auslander-Reiten sequences in $\smod{\Lambda_{(0,0)}}$.
\begin{proof}
The result follows from \cite{AS:subcategories} and Theorem \ref{bireflective}. 
\end{proof}
\end{cor}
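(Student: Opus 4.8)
The plan is to reduce the statement to the general machinery of almost split sequences in subcategories due to Auslander and Smal{\o} \cite{AS:subcategories}. That theory produces relative Auslander-Reiten sequences for a full subcategory $\C$ of $\smod{\Lambda_{(0,0)}}$ as soon as $\C$ is (a) functorially finite in $\smod{\Lambda_{(0,0)}}$ and (b) closed under extensions: condition (a) supplies the minimal right and left relative almost split morphisms, while condition (b) is what allows these to be completed to short exact sequences inside $\C$. So the whole proof amounts to checking that the subcategories $\smod{A}$ and $\smod{B}$, embedded in $\smod{\Lambda_{(0,0)}}$ via $\mz_A$ and $\mz_B$, satisfy (a) and (b).

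Condition (a) is immediate: it is part of the conclusion of \thmref{bireflective}, which says precisely that $\smod{A}$ and $\smod{B}$ are functorially finite (indeed bireflective, hence also closed under kernels and cokernels) in $\smod{\Lambda_{(0,0)}}$. For condition (b) I would argue as follows. By \propref{recollement}(iv), since $\psi=0$, the essential image of $\mz_A$ is exactly $\Ker{\mU_B}=\{(X,0,0,0) \mid X\in \smod{A}\}$, which is the module category of the quotient algebra $\Lambda_{(0,0)}/\Lambda_{(0,0)}e_2\Lambda_{(0,0)}$. The kernel of the exact functor $\mU_B$ is a Serre subcategory of $\smod{\Lambda_{(0,0)}}$, and in particular closed under extensions; concretely, given a short exact sequence $0\to (X,0,0,0)\to (Z,W,f,g)\to (X',0,0,0)\to 0$ in $\smod{\Lambda_{(0,0)}}$ with $X,X'\in\smod{A}$, applying the exact functor $\mU_B$ forces $W=0$, so the middle term again lies in $\mz_A(\smod{A})$. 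The verification for $\mz_B(\smod{B})$ is symmetric, using $\mU_A$ and $\Ker{\mU_A}$.

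With (a) and (b) in hand, \cite{AS:subcategories} applies to $\C=\smod{A}$ and to $\C=\smod{B}$ and yields the desired relative Auslander-Reiten sequences in $\smod{\Lambda_{(0,0)}}$. The only point that genuinely requires care is the first paragraph: one must quote the version of the Auslander--Smal{\o} theorem that delivers honest relative almost split \emph{sequences} from functorial finiteness together with closure under extensions, rather than merely one-sided relative almost split morphisms, and one must be sure our subcategories meet both hypotheses and not just functorial finiteness (which is why the small extension-closedness argument above is needed, since \thmref{bireflective} as stated lists closure under kernels and cokernels but not extensions). Everything else is a direct appeal to \thmref{bireflective}.
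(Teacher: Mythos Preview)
Your proof is correct and follows essentially the same route as the paper's: both invoke the Auslander--Smal{\o} machinery together with Theorem~\ref{bireflective}. You are in fact more careful than the paper, which simply cites \cite{AS:subcategories} and Theorem~\ref{bireflective} without spelling out the extension-closure hypothesis; your observation that $\mz_A(\smod{A})=\Ker{\mU_B}$ is a Serre subcategory (and symmetrically for $\mz_B$) is exactly the missing verification, and is implicitly what the paper has in mind.
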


\begin{cor}
Let $\Lambda_{(0,0)}$ be a Morita ring which is an Artin algerba. Let $\U$ be an extension closed functorially finite subcategory of $\smod{A}$ such that $\U\subseteq \Ker{\Hom_{A}(N,-)}$ and $\V$ an extension closed functorially finite subcategory of $\smod{B}$ such that $\V\subseteq \Ker{N\otimes_B-}$. Then the full subcategory $\W=\{(X,Y,f,g)\in \smod{\Lambda_{(0,0)}} \ | \ X\in \U \ \ \text{and} \ \ Y\in \V \}$ has Auslander-Reiten sequences.
\begin{proof}
Since $\U$ and $\V$ are closed under extensions it follows that $\W$ is also closed under extensions. Then the result follows from Theorem $4.8$ and \cite{AS:subcategories}. 
\end{proof}
\end{cor}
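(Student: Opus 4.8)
The plan is to recognize this as a direct instance of the Auslander--Smal{\o} theorem on the existence of relative Auslander--Reiten sequences \cite{AS:subcategories}, which guarantees such sequences for any subcategory of $\smod{\Lambda_{(0,0)}}$ that is simultaneously functorially finite and closed under extensions. So I would simply verify these two properties for $\W$.

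For extension closure, I would take a short exact sequence $0\lxr (X',Y',f',g')\lxr (X,Y,f,g)\lxr (X'',Y'',f'',g'')\lxr 0$ in $\smod{\Lambda_{(0,0)}}$ with outer terms in $\W$ and apply the exact functors $\mU_A$ and $\mU_B$ from Proposition \ref{recollement}(iii). This produces short exact sequences $0\lxr X'\lxr X\lxr X''\lxr 0$ in $\smod{A}$ and $0\lxr Y'\lxr Y\lxr Y''\lxr 0$ in $\smod{B}$ with $X',X''\in\U$ and $Y',Y''\in\V$; since $\U$ and $\V$ are extension closed we obtain $X\in\U$ and $Y\in\V$, so $(X,Y,f,g)\in\W$ (and $\W$ is automatically closed under summands because $\U$ and $\V$ are). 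Functorial finiteness of $\W$ in $\smod{\Lambda_{(0,0)}}$ is then exactly the content of Theorem $4.8$(iii), whose hypotheses $\U\subseteq\Ker{\Hom_A(N,-)}$ and $\V\subseteq\Ker{N\otimes_B-}$ are in force by assumption. Once both properties are in hand, the conclusion drops out of \cite{AS:subcategories}.

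The only subtlety, and the step I would handle most carefully, is that Theorem $4.8$ is stated for the big module categories, whereas here I need the version for finitely generated modules over the Artin algebra $\Lambda_{(0,0)}$. This is harmless: the left and right $\U$- and $\V$-approximations of finitely generated modules may be chosen finitely generated, and the pushout (resp.\ pullback) square used in the proof of Theorem $4.8$ to assemble the approximating $\Lambda_{(0,0)}$-module involves only finitely generated data, so the entire construction restricts to $\smod$ unchanged. Apart from this routine bookkeeping, the argument is a citation.
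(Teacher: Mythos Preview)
Your proposal is correct and follows essentially the same approach as the paper: verify that $\W$ is extension closed (the paper states this without detail; your use of the exact functors $\mU_A$, $\mU_B$ is the natural way to check it), invoke Theorem~4.8 for functorial finiteness, and then appeal to \cite{AS:subcategories}. Your additional remark about passing from $\Mod$ to $\smod$ is a welcome clarification that the paper leaves implicit.
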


The last part in the paper of Smal{\o} \cite{triangular} deals with the full subcategory of modules of finite projective dimension. In particular, if $\Lambda=\bigl(\begin{smallmatrix}
A & 0 \\
_BM_A & B
\end{smallmatrix}\bigr)$ is a triangular matrix Artin algebra, then the category of $\Lambda$-modules of finite projective dimension is contravariantly finite in $\smod{\Lambda}$ if and only if the category of $A$-modules of finite projective dimension is contravariantly finite in $\smod{A}$ and the category of $B$-modules of finite projective dimension is contravariantly finite in $\smod{B}$, see \cite[Proposition $2.3$]{triangular}. This result follows from Corollary $4.10$ and the description of the subcategory of $\Lambda$-modules of finite projective dimension. Recall that a $\Lambda$-module $(X,Y,f)$ is of finite projective dimension if and only if the projective dimensions of $_AX$ and $_BY$ are finite. 

We close this section with the next example which shows that the subcategory of $\Lambda_{(0,0)}$-modules of finite projective dimension cannot be described as in the lower triangular case. This distinguishes our situation from the lower triangular situation.

\begin{exam}  Let $K$ be a field and $K\mathcal Q$ be the path
algebra where $\mathcal Q$  is the quiver
\[\mathcal Q=\xymatrix{
\stackrel{v_1}{\circ}\ar@(ul,dl)_a[]
\ar@/^/^b[r]
&\stackrel{v_2}{\circ}\ar@(dr,ur)_d[]
\ar@/^/_c[l]
}\]
Let $I$ be the ideal generated by $a^2, bc,cb, d^2, ba-db$, and
$cd-ac$ and let $\Lambda=K\mathcal Q/I$.  It is not hard to show
that $\Lambda$ is a selfinjective finite dimensional $K$-algebra. The structure of the indecomposable projective-injective modules look like:
\[
\xymatrix{
&v_1\ar@{-}_a[dl]\ar@{-}^b[dr]\\
v_1\ar@{-}_b[dr]&&v_2\ar@{-}^d[dl]\\
&v_2
}  \quad\quad
\xymatrix{
&v_2\ar@{-}_d[dl]\ar@{-}^c[dr]\\
v_2\ar@{-}_c[dr]&&v_1\ar@{-}^a[dl]\\
&v_1
}\]
 Setting $e=v_1$ and $e'=v_2$,
we view $\Lambda$ as the Morita ring  via 
\[\Lambda_{(\phi,\psi)} = \begin{pmatrix} e\Lambda e & e\Lambda e'\\
e'\Lambda e & e'\Lambda e'
\end{pmatrix}\]
Note that, in this case, $\phi=\psi=0$. One sees that $\Lambda$ is a selfinjective finite dimensional  biserial algebra. Consider
the string module of  the form
\[D=
\xymatrix{
&v_1\ar@{-}[dl]\ar@{-}[dr]&&v_2\ar@{-}[dl]\\
v_1&&v_2
}\]
Viewing $D$ as module over the algebra $\Lambda_{(\phi,\psi)}$,
$D=(X, Y,f,g)$, we see that $X$ is isomorphic to $e\Lambda e$ as 
a left $e\Lambda e$-module, $Y$ is isomorphic to $e'\Lambda e'$ as 
a left $e'\Lambda e'$-module,  and $g=0$.  Thus, we have that
$\pd_{e\Lambda e}{X}<\infty$, $\pd_{e'\Lambda e'}{Y}<\infty$, but
$\pd_{\Lambda }{D}=\infty$ (since $D$ is not a projective $\Lambda$-module
and $\Lambda$ is selfinjective). Finally, letting $R=K[x]/(x^2)$, then it is easy to see that $\Lambda_{(0,0)} \cong \bigl(\begin{smallmatrix}
R & R \\
R & R
\end{smallmatrix}\bigr)$.
\end{exam}

\section{Bounds on the Global Dimension}
Let $\Lambda_{(0,0)}=\bigl(\begin{smallmatrix}
A & _AN_B \\
_BM_A & B
\end{smallmatrix}\bigr)$ be a Morita ring which is an Artin algebra with $\phi=\psi=0$. In this section we show that, under certain restictions on either $M$ or $N$, there is a bound on the global dimension of $\Lambda_{(0,0)}$ in terms of the global dimensions of $A$ and $B$. This is achieved via the notion of tight projective module and tight projective resolution that we introduce in the first subsection. 

\subsection{Tight Resolutions and Upper Bounds} 

Before we begin with some preliminary definitions and results, we give an
example which shows that we will need some restrictions to get a
bound on the global dimension of $\Lambda_{(0,0)}$ in terms of the global
dimensions of $A$ and $B$.

\begin{exam}\label{infgldim}
\rm{
Let $K$ be a field and $\mathcal{Q}$ be the quiver
\[\xymatrix{
\stackrel{v}{\circ}\ar@<1ex>[r]^a&\stackrel{w}{\circ}\ar@<1ex>[l]^b
}\] Let $\Lambda=K\mathcal{Q}/\langle ab,ba\rangle$. Let $P$ (respectively
$Q$) be the projective $\Lambda$-cover of the simple module having $K$
at vertex $v$ (resp. $w$) and $0$ at vertex $w$ (resp. $v$). Then
$\Lambda=P\oplus Q$.   Hence $\Lambda$ is isomorphic to
$\Hom_{\Lambda}(P\oplus Q,P\oplus Q)^{\op}$, which in turn is
isomorphic to the matrix algebra
\[
\begin{pmatrix} \End_{\Lambda}(P)^{\op}& \Hom_{\Lambda}(P,Q)\\
\Hom_{\Lambda}(Q,P) & \End_{\Lambda}(Q)^{\op}\end{pmatrix}
\]
Each entry in this $2\times 2$-matrix is $K$ but the multiplication
of two elements, one of the form $\bigl(\begin{smallmatrix}
0 & 0 \\
\alpha & 0
\end{smallmatrix}\bigr)$ and the other of the form
$\bigl(\begin{smallmatrix}
0 & \beta \\
0 & 0
\end{smallmatrix}\bigr)$, in any
order, is $0$. Thus, as a Morita ring
$\bigl(\begin{smallmatrix}
A & N \\
M & B
\end{smallmatrix}\bigr)$, $A=B=M=N=K$ and $\phi=\psi=0$.
Hence $A$ and $B$ have global dimension $0$ and $M$ and $N$ have
projective dimension $0$ over both $A$ and $B$. But $\Lambda$ has
infinite global dimension. \qed }
\end{exam}

We introduce the following notion which is crucial for our results of this section.

\begin{defn}
If $P=(P_ A,0,0,0)$ is a projective $\Lambda_{(0,0)}$-module for some left
$A$-module $P_A$, then $P$ is called an  
\textsf{$A$-tight projective} $\Lambda_{(0,0)}$-module. We say that a left $\Lambda_{(0,0)}$-module $(X,0,0,0)$ has an \textsf{$A$-tight projective $\Lambda_{(0,0)}$-resolution} if $(X,0,0,0)$ has a projective $\Lambda_{(0,0)}$-resolution in which each
projective $\Lambda_{(0,0)}$-module is $A$-tight.
\end{defn}

Note that if $(P_A,0,0,0)$ is an $A$-tight projective $\Lambda_{(0,0)}$-module then $P_A$ is a projective
$A$-module and $M\otimes_AP_A=0$. Conversely, if $P_A$ is a
projective $A$-module and $M{\otimes_A}P_A=0$, then $(P_A,0,0,0)$ is
an $A$-tight projective $\Lambda_{(0,0)}$-module. It is easy to see the following.
\begin{enumerate}
\item A direct sum of
modules having $A$-tight projective $\Lambda_{(0,0)}$-resolutions also has
an $A$-tight projective $\Lambda_{(0,0)}$-resolution.

\item A summand of an $A$-tight projective $\Lambda_{(0,0)}$-module is again
an $A$-tight projective $\Lambda_{(0,0)}$-module.

\item If $X$ is an $A$-module such that $(X,0,0,0)$ has an $A$-tight projective
$\Lambda_{(0,0)}$-resolution, then $\pd_{A}X=\pd_{\Lambda_{(0,0)}}(X,0,0,0)$.
\end{enumerate}

The next result classifies $\Lambda_{(0,0)}$-modules having $A$-tight
projective $\Lambda_{(0,0)}$-resolutions.

\begin{prop}\label{tightclass} A $\Lambda_{(0,0)}$-module of the form
$(X,0,0,0)$ has an $A$-tight projective $\Lambda_{(0,0)}$-resolution if and
only if $M\otimes_AP=0$, where $P$ is the direct sum of projective
$A$-modules in a minimal projective $A$-resolution of $X$.
\end{prop}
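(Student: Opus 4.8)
The plan is to transport the problem back and forth between $\Lambda_{(0,0)}$-modules of the shape $(X,0,0,0)$ and $A$-modules, by means of the functors $\mz_A$ and $\mU_A$, and then to invoke the standard fact that over an Artin algebra the minimal projective resolution of a module is isomorphic to a direct summand of any projective resolution of it. Two preliminary observations will be used throughout. First, by \propref{proj}, and since $\psi=0$ forces $\Psi_P=0$, a module $(P_A,0,0,0)$ is an $A$-tight projective $\Lambda_{(0,0)}$-module exactly when $P_A$ is a projective $A$-module with $M\otimes_A P_A=0$ (this is the observation recorded right after the definition of $A$-tightness, and in that case $(P_A,0,0,0)=\mt_A(P_A)$). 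Secondly, by \propref{recollement} the functor $\mz_A\colon\Mod A\lxr\Mod\Lambda_{(0,0)}$ sits inside a recollement, hence possesses both a left and a right adjoint and is in particular exact; moreover $\mU_A$ is exact and $\mU_A\mz_A\simeq\iden_{\Mod A}$.

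For the implication ``$\Leftarrow$'', I would fix a minimal projective $A$-resolution $\cdots\lxr P_1\lxr P_0\lxr X\lxr 0$ and set $P=\bigoplus_{i\ge 0}P_i$, so that $M\otimes_A P=0$ is equivalent to $M\otimes_A P_i=0$ for all $i$. Under this hypothesis each $\mt_A(P_i)=(P_i,0,0,0)$ is an $A$-tight projective $\Lambda_{(0,0)}$-module, and applying the exact functor $\mz_A$ to the resolution yields an exact complex $\cdots\lxr(P_1,0,0,0)\lxr(P_0,0,0,0)\lxr(X,0,0,0)\lxr 0$ of $\Lambda_{(0,0)}$-modules, which is the desired $A$-tight projective $\Lambda_{(0,0)}$-resolution.

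For ``$\Rightarrow$'', suppose $(X,0,0,0)$ admits an $A$-tight projective resolution $\cdots\lxr(P'_1,0,0,0)\lxr(P'_0,0,0,0)\lxr(X,0,0,0)\lxr 0$, where each $P'_i$ is projective over $A$ and $M\otimes_A P'_i=0$. Applying the exact functor $\mU_A$ produces a projective $A$-resolution $\cdots\lxr P'_1\lxr P'_0\lxr X\lxr 0$, which however need not be minimal. Now comes the decisive step: comparing this with a minimal projective $A$-resolution $\cdots\lxr P_1\lxr P_0\lxr X\lxr 0$, the uniqueness properties of minimal projective resolutions over an Artin algebra show that $P'_\bullet$ is isomorphic to the direct sum of $P_\bullet$ with a contractible complex of projective $A$-modules, so that each $P_n$ is a direct summand of $P'_n$ (see \cite[Chapter~I]{ARS}). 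Since $M\otimes_A-$ is an additive functor annihilating every $P'_n$, it annihilates every $P_n$ as well, and hence $M\otimes_A P=\bigoplus_{n\ge 0}(M\otimes_A P_n)=0$, as wanted.

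The one point that requires genuine care is exactly this last comparison: the $A$-tight resolution we are handed is arbitrary, not minimal, so the vanishing $M\otimes_A P'_n=0$ must be transferred to the modules $P_n$ occurring in the \emph{minimal} resolution, and for this one really needs the summand property of minimal projective resolutions. Everything else reduces to bookkeeping with the exactness of $\mz_A$ and $\mU_A$ together with the description of the projective $\Lambda_{(0,0)}$-modules in \propref{proj}.
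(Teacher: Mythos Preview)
Your proof is correct, and the ``$\Leftarrow$'' direction matches the paper's. For ``$\Rightarrow$'' you take a somewhat different route. The paper argues by contrapositive: assuming $M\otimes_A P\ne 0$, it picks the least $n$ with $M\otimes_A P^n\ne 0$ and explicitly computes the \emph{minimal} projective $\Lambda_{(0,0)}$-resolution of $(X,0,0,0)$, showing that it begins with the $A$-tight terms $(P^0,0,0,0),\dots,(P^{n-1},0,0,0)$ but that its $(n{+}1)$-st syzygy is $(\Omega_A^{n+1}(X),M\otimes_A P^n,\iden_M\otimes\kappa,0)$, whose nonzero second component prevents the next projective from being $A$-tight; then one finishes (implicitly) by the same summand-of-minimal-resolution fact you invoke, now on the $\Lambda_{(0,0)}$-level. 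Your argument instead pushes an arbitrary $A$-tight resolution down to $\Mod A$ via $\mU_A$ and compares with the minimal $A$-resolution there, which is more direct and avoids any computation on the $\Lambda_{(0,0)}$-side. The paper's approach has the compensating advantage of exhibiting exactly where and how the minimal $\Lambda_{(0,0)}$-resolution first fails to be $A$-tight, information that is used in the surrounding discussion.
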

\begin{proof}  
Suppose that $\cdots \lxr P^2 \lxr P^1 \lxr P^0 \lxr X\lxr 0$
is a minimal projective $A$-resolution of $X$. Set $P=\oplus_{n\ge
0}P^n$.  If $M\otimes_A P=0$ then $M\otimes_A P^n=0$, for all $n\ge 0$.
It follows that $(X,0,0,0)$ has an $A$-tight projective
$\Lambda_{(0,0)}$-resolution. On the other hand, if $M\otimes_AP\ne 0$, then there is a smallest
$n\ge 0$, such that $M\otimes_{A} P^n\ne 0$.  It follows that there is a
minimal projective $\Lambda_{(0,0)}$-resolution of $(X,0,0,0)$ that starts
\[
\xymatrix{
  (P^n,M\otimes_AP^n,\iden_{M\otimes P^n},0) \ar[r]^{} & (P^{n-1},0,0,0) \ar[r] & \cdots \ar[r] & (P^0,0,0,0) \ar[r] & (X,0,0,0) \ar[r] & 0    }
\]
Hence the $n+1$-st syzygy is of the form
$(\Omega^{n+1}_A(X),M\otimes_AP^n,\iden_{M}\otimes \kappa,0)$, where $\kappa$ is the monomorphism $\kappa\colon \Omega^{n+1}_A(X)\lxr P^n$. It follows that the next
projective in the above minimal $\Lambda_{(0,0)}$-resolution of $(X,0,0,0)$ is not
$A$-tight and the result follows.
\end{proof}

We use the next result a number of times in what follows.

\begin{lem}\label{lemmapd}
If $P$ is a projective $A$-module such that $(P,0,0,0)$ is not an
$A$-tight projective $\Lambda_{(0,0)}$-module, then$\colon$
\[\pd_{\Lambda_{(0,0)}}(P,0,0,0)=1+\pd_{\Lambda_{(0,0)}}(0,M\otimes_AP,0,0)
\]
If $Q$ is a projective $B$-module such that $(0,Q,0,0)$ is not a
$B$-tight projective $\Lambda_{(0,0)}$-module, then$\colon$
\[\pd_{\Lambda_{(0,0)}}(0,Q,0,0)=1+\pd_{\Lambda_{(0,0)}}(N\otimes_BQ,0,0,0)
\]
\end{lem}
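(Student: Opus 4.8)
The plan is to reduce both statements to a single short exact sequence together with the standard dimension-shifting argument. Consider the first assertion, with $P$ a projective $A$-module. Since $\psi=0$ we have $\Psi_P=0$, so the projective $\Lambda_{(0,0)}$-module attached to $P$ is $\mt_A(P)=(P,M\otimes_AP,\iden_{M\otimes_AP},0)$; it is projective because $\mt_A$ is left adjoint to the exact functor $\mU_A$ and hence preserves projectives (\propref{recollement}), or directly by \propref{proj}. The pair $(\iden_P,0)$ is a $\Lambda_{(0,0)}$-epimorphism $\mt_A(P)\lxr (P,0,0,0)$ whose kernel, read off from the tuple description of $\M(\Lambda)$, is $(0,M\otimes_AP,0,0)$; exactness of
\[
0 \lxr (0,M\otimes_AP,0,0) \lxr \mt_A(P) \lxr (P,0,0,0) \lxr 0
\]
is immediate upon applying the exact functors $\mU_A$ and $\mU_B$, which return $0\to 0\to P\xrightarrow{\iden}P\to 0$ and $0\to M\otimes_AP\xrightarrow{\iden}M\otimes_AP\to 0\to 0$ respectively.

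Next I would spell out the hypothesis. By the definition of $A$-tightness, $(P,0,0,0)$ is an $A$-tight projective $\Lambda_{(0,0)}$-module if and only if it is a projective $\Lambda_{(0,0)}$-module; and, as noted right after that definition, for $P$ already projective over $A$ this happens precisely when $M\otimes_AP=0$. Thus assuming $(P,0,0,0)$ is \emph{not} $A$-tight projective says exactly that $(P,0,0,0)$ is non-projective over $\Lambda_{(0,0)}$, so $\pd_{\Lambda_{(0,0)}}(P,0,0,0)\ge 1$. Now apply dimension shifting along the displayed sequence: since the middle term is projective, $\pd_{\Lambda_{(0,0)}}(P,0,0,0)\le n \iff \pd_{\Lambda_{(0,0)}}(0,M\otimes_AP,0,0)\le n-1$ for every $n\ge 1$, and because the left-hand dimension is already known to be $\ge 1$ this forces
\[
\pd_{\Lambda_{(0,0)}}(P,0,0,0)=1+\pd_{\Lambda_{(0,0)}}(0,M\otimes_AP,0,0),
\]
with the usual convention that $1+\infty=\infty$.

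The second assertion is proved in the same (dual) way: for $Q$ a projective $B$-module one has $\Phi_Q=0$, the module $\mt_B(Q)=(N\otimes_BQ,Q,0,\iden_{N\otimes_BQ})$ is projective, and $(0,\iden_Q)$ gives a short exact sequence
\[
0 \lxr (N\otimes_BQ,0,0,0) \lxr \mt_B(Q) \lxr (0,Q,0,0) \lxr 0 ;
\]
the hypothesis that $(0,Q,0,0)$ is not $B$-tight projective makes it non-projective over $\Lambda_{(0,0)}$, and dimension shifting yields $\pd_{\Lambda_{(0,0)}}(0,Q,0,0)=1+\pd_{\Lambda_{(0,0)}}(N\otimes_BQ,0,0,0)$. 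I do not foresee a genuine obstacle: the only points demanding care are the (routine) identification of the kernel of $\mt_A(P)\lxr(P,0,0,0)$ from the definition of morphisms in $\M(\Lambda)$, and the correct use of the non-tightness hypothesis, which is exactly what makes the additive constant equal to $1$ rather than $0$.
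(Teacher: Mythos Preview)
Your proof is correct and follows essentially the same approach as the paper: both rely on the short exact sequence $0 \lxr (0,M\otimes_AP,0,0) \lxr (P,M\otimes_AP,\iden_{M\otimes_AP},0) \lxr (P,0,0,0) \lxr 0$ with projective middle term. Your write-up is in fact more careful than the paper's, since you make explicit how the non-tightness hypothesis ensures $\pd_{\Lambda_{(0,0)}}(P,0,0,0)\ge 1$ and hence yields an equality rather than merely an inequality.
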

\begin{proof} The first statement follows from the following short
exact sequence of $\Lambda_{(0,0)}$-modules 
\[
\xymatrix{
  0 \ar[r]^{} & (0,M\otimes_AP,0,0) \ar[r] & (P,M\otimes_AP, \iden_{M\otimes_A P},0) \ar[r] & (P,0,0,0) \ar[r] & 0    }
\]
and the proof of the second statement is similar.
\end{proof}

We define \textsf{$B$-tight projective $\Lambda_{(0,0)}$-modules} $(0,Q,0,0)$
in a similar fashion as $A$-tight projective $\Lambda_{(0,0)}$-modules and
also $\Lambda_{(0,0)}$-modules $(0,Y,0,0)$ having \textsf{$B$-tight projective
$\Lambda_{(0,0)}$-resolutions}.

We also have the following result.

\begin{lem}\label{tightlemma} Let $X$ be an $A$-$B$-bimodule such
that $(X,0,0,0)$ has an $A$-tight projective $\Lambda_{(0,0)}$-resolution.
If $Q$ is a projective $B$-module, then $(X\otimes_BQ,0,0,0)$ has an
$A$-tight projective $\Lambda_{(0,0)}$-resolution.
\end{lem}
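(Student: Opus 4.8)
The plan is to reduce the claim to two elementary facts: that an arbitrary direct sum of copies of $X$ has an $A$-tight projective $\Lambda_{(0,0)}$-resolution, and that an $A$-tight projective $\Lambda_{(0,0)}$-resolution passes to direct summands.

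First I would use that $Q$, being a projective $B$-module, is a direct summand of a free $B$-module $B^{(I)}$ for some index set $I$; say $Q\oplus Q'\cong B^{(I)}$. Tensoring with the $A$-$B$-bimodule $X$ over $B$ gives an isomorphism of left $A$-modules $(X\otimes_B Q)\oplus (X\otimes_B Q')\cong X\otimes_B B^{(I)}\cong X^{(I)}$, and applying the additive functor $\mz_A\colon \Mod{A}\lxr \Mod{\Lambda_{(0,0)}}$ exhibits $(X\otimes_B Q,0,0,0)$ as a direct summand of $(X^{(I)},0,0,0)$ in $\Mod{\Lambda_{(0,0)}}$. Next, fix a projective $A$-resolution $\cdots\lxr P^1\lxr P^0\lxr X\lxr 0$ such that each $(P^n,0,0,0)$ is $A$-tight projective, i.e. $M\otimes_A P^n=0$; such a resolution exists by hypothesis. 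Taking the direct sum of $I$ copies yields a projective $A$-resolution $\cdots\lxr (P^1)^{(I)}\lxr (P^0)^{(I)}\lxr X^{(I)}\lxr 0$ with $M\otimes_A (P^n)^{(I)}\cong (M\otimes_A P^n)^{(I)}=0$, so each $((P^n)^{(I)},0,0,0)$ is $A$-tight projective and $(X^{(I)},0,0,0)$ has an $A$-tight projective $\Lambda_{(0,0)}$-resolution.

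It then remains to show that a direct summand $(Z,0,0,0)$ of a $\Lambda_{(0,0)}$-module $(Z',0,0,0)$ that admits an $A$-tight projective $\Lambda_{(0,0)}$-resolution again admits one; applying this with $Z=X\otimes_B Q$ and $Z'=X^{(I)}$ then finishes the proof. Write $Z'\cong Z\oplus W$ in $\Mod{A}$. Since an Artin algebra is a perfect ring, projective covers, and hence minimal projective resolutions, exist for all $A$-modules, and a minimal projective $A$-resolution of $Z'$ is the direct sum of minimal projective $A$-resolutions of $Z$ and of $W$. Hence, letting $\bar P$ and $\widetilde P$ be the direct sums of the projective $A$-modules occurring in minimal projective $A$-resolutions of $Z$ and of $Z'$ respectively, $\bar P$ is a direct summand of $\widetilde P$. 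By Proposition~\ref{tightclass} the hypothesis on $Z'$ gives $M\otimes_A\widetilde P=0$; since $M\otimes_A-$ carries split monomorphisms to split monomorphisms, $M\otimes_A\bar P$ is a direct summand of $0$, so $M\otimes_A\bar P=0$, and Proposition~\ref{tightclass} shows that $(Z,0,0,0)$ has an $A$-tight projective $\Lambda_{(0,0)}$-resolution. The only real subtlety is this summand step, which is where Proposition~\ref{tightclass} and the perfectness of $A$ enter; the earlier manipulations with tensor products and direct sums are routine.
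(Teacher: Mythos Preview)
Your argument is correct and follows essentially the same approach as the paper. The paper's proof is only a slight variant: rather than embedding $Q$ as a summand of a free module $B^{(I)}$, it decomposes $Q$ as a direct sum of indecomposables $Be$ (using that $B$ is Artin) and reduces to $Q=Be$, so that $X\otimes_B Q\cong Xe$ is a summand of $X$ itself rather than of $X^{(I)}$; the summand step, which you justify carefully via Proposition~\ref{tightclass} and minimal resolutions, is left implicit there.
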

\begin{proof}  
Since direct sums of modules that have $A$-tight
projective $\Lambda_{(0,0)}$-resolutions are modules having $A$-tight
projective $\Lambda_{(0,0)}$-resolutions, we may assume that $Q=Be$, for some
primitive idempotent $e$ in $B$. Since $X\otimes_B Be\simeq Xe$ is a
summand of $X$, the result follows.
\end{proof}

The next lemma is a useful tool in what follows.

\begin{lem}\label{pdlemma} Let $X$ be an $A$-module and $Y$ be a $B$-module. Then$\colon$
\begin{enumerate}
\item $\pd_{\Lambda_{(0,0)}}(X,0,0,0) \ \le \ 1 +
\maxx\lbrace\pd_{\Lambda_{(0,0)}}(\Omega^1_A(X),0,0,0),\pd_{\Lambda_{(0,0)}}(0,M,0,0)\rbrace
$

\item $\pd_{\Lambda_{(0,0)}}(0,Y,0,0) \ \le \ 1 +
\maxx\lbrace\pd_{\Lambda_{(0,0)}}(0,\Omega^1_B(Y),0,0),\pd_{\Lambda_{(0,0)}}(N,0,0,0)\rbrace
$
\end{enumerate}
\end{lem}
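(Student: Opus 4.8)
The plan is to build a one-step projective resolution of $(X,0,0,0)$ over $\Lambda_{(0,0)}$, identify the first syzygy explicitly, and then split it into the two modules on the right of (i). First I would pick a projective cover $p\colon P^{0}\epic X$ in $\Mod{A}$ with kernel $\kappa\colon \Omega^{1}_{A}(X)\hookrightarrow P^{0}$. Since $\psi=0$ we have $\Psi_{P^{0}}=0$, so by Proposition~\ref{proj} the module $\mt_{A}(P^{0})=(P^{0},M\otimes_{A}P^{0},\iden_{M\otimes_{A}P^{0}},0)$ is projective over $\Lambda_{(0,0)}$. A routine check (all relevant structure maps vanish because $\phi=\psi=0$) shows that $(p,0)\colon \mt_{A}(P^{0})\lxr (X,0,0,0)$ is a $\Lambda_{(0,0)}$-morphism; it is an epimorphism because its underlying $A$-component is $p$ and its $B$-component is the zero map onto $0$, and since $\mU_{A},\mU_{B}$ are exact its kernel is $(\Omega^{1}_{A}(X),M\otimes_{A}P^{0},M\otimes_{A}\kappa,0)$. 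As $\mt_{A}(P^{0})$ is projective, this yields at once
\[
\pd_{\Lambda_{(0,0)}}(X,0,0,0)\ \le\ 1+\pd_{\Lambda_{(0,0)}}\bigl(\Omega^{1}_{A}(X),M\otimes_{A}P^{0},M\otimes_{A}\kappa,0\bigr).
\]

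Next I would split this syzygy via the short exact sequence
\[
0\lxr (0,M\otimes_{A}P^{0},0,0)\lxr \bigl(\Omega^{1}_{A}(X),M\otimes_{A}P^{0},M\otimes_{A}\kappa,0\bigr)\lxr (\Omega^{1}_{A}(X),0,0,0)\lxr 0,
\]
whose subobject is $\mz_{B}(M\otimes_{A}P^{0})$ and whose quotient is $\mz_{A}(\Omega^{1}_{A}(X))$; hence the middle term has projective dimension at most $\maxx\{\pd_{\Lambda_{(0,0)}}(0,M\otimes_{A}P^{0},0,0),\ \pd_{\Lambda_{(0,0)}}(\Omega^{1}_{A}(X),0,0,0)\}$. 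Finally, $P^{0}$ is a direct summand of a free $A$-module $A^{(I)}$, so $M\otimes_{A}P^{0}$ is a summand of $M\otimes_{A}A^{(I)}\cong M^{(I)}$, whence $\mz_{B}(M\otimes_{A}P^{0})$ is a summand of $(0,M,0,0)^{(I)}$; since projective dimension is bounded by that of an overmodule and is unchanged under taking copies, $\pd_{\Lambda_{(0,0)}}(0,M\otimes_{A}P^{0},0,0)\le \pd_{\Lambda_{(0,0)}}(0,M,0,0)$. Concatenating the three estimates gives (i), and (ii) follows by the symmetric argument with $\mt_{B},\mz_{A},N\otimes_{B}-$ in place of $\mt_{A},\mz_{B},M\otimes_{A}-$ and a projective cover $Q^{0}\epic Y$ in $\Mod{B}$.

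All the diagram-chasing inside $\M(\Lambda)$ is elementary, so the one point that genuinely needs care is that one must start from the \emph{projective cover} $P^{0}\epic X$ rather than an arbitrary projective presentation: with a non-minimal presentation the kernel picks up an extra projective $A$-summand $P'$, and $\pd_{\Lambda_{(0,0)}}(P',0,0,0)$ is in general only bounded by $1+\pd_{\Lambda_{(0,0)}}(0,M,0,0)$ by Lemma~\ref{lemmapd}, which would weaken the estimate to a ``$+2$''. Using the projective cover keeps the first $\Lambda_{(0,0)}$-syzygy of $(X,0,0,0)$ equal to $(\Omega^{1}_{A}(X),M\otimes_{A}P^{0},M\otimes_{A}\kappa,0)$ exactly, and this is what makes the sharp ``$+1$'' bound go through.
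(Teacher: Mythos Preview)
Your proof is correct and follows essentially the same route as the paper's: both take the projective cover $P\twoheadrightarrow X$, form the short exact sequence $0\to(\Omega^1_A(X),M\otimes_AP,M\otimes\kappa,0)\to\mt_A(P)\to(X,0,0,0)\to 0$, split the first syzygy via $0\to(0,M\otimes_AP,0,0)\to(\cdot)\to(\Omega^1_A(X),0,0,0)\to 0$, and bound $\pd_{\Lambda_{(0,0)}}(0,M\otimes_AP,0,0)$ by $\pd_{\Lambda_{(0,0)}}(0,M,0,0)$ using that $M\otimes_AP$ is assembled from summands of $M$. Your closing remark on why the projective \emph{cover} (rather than an arbitrary presentation) is needed for the sharp ``$+1$'' is a nice extra observation that the paper leaves implicit.
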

\begin{proof}  
We only prove (i) since the proof of (ii) is similar. Let $\alpha\colon P\lxr X$ be a projective $A$-cover of $X$ with kernel $\Omega^1_A(X)$. Then we have a short exact sequence
\[
\xymatrix{
  0 \ar[r]^{} & (\Omega^1_A(X),M\otimes_{A} P,M\otimes k,0) \ar[rr]^{ \ \ \ (k,\iden_{M\otimes P})} && (P,M\otimes_{A} P,\iden_{M\otimes P},0) \ar[r]^{ \ \ \ \ \ \ \ (\alpha,0)} & (X,0,0,0) \ar[r] & 0    }
\]
in which the middle term is a projective $\Lambda_{(0,0)}$-module. Therefore it
follows that $\pd_{\Lambda_{(0,0)}}(X,0,0,0)\le 1 +
\pd_{\Lambda_{(0,0)}}(\Omega^1_A(X),M\otimes_{A} P,M\otimes k,0)$. Next
we note that we have a short exact sequence
\[
\xymatrix{
0 \ar[r] & (0,M\otimes_{A} P,0,0)\ar[r] & (\Omega^1_A(X),M\otimes_{A} P,M\otimes
k,0)\ar[r] & (\Omega^1_A(X),0,0,0)\ar[r] & 0}
\] 
We observe that $M\otimes_AP$ is direct sum of summands of $M$. Hence we infer that
$\pd_{\Lambda_{(0,0)}}(0,M\otimes_AP,0,0) \le \pd_{\Lambda_{(0,0)}}(0,M,0,0)$ and the
result now follows.
\end{proof}

We get an immediate consequence of the previous lemma.

\begin{cor}\label{prepd} Let $X$ be an $A$-module
and $Y$ be a $B$-module. Then$\colon$
\begin{enumerate}
\item $\pd_{\Lambda_{(0,0)}}(X,0,0,0) \ \le \ \pd_AX+1 +
\pd_{\Lambda_{(0,0)}}(0,M,0,0) $

\item $\pd_{\Lambda_{(0,0)}}(0,Y,0,0) \ \le \ \pd_BY+1 +
\pd_{\Lambda_{(0,0)}}(N,0,0,0) $
\end{enumerate}
\end{cor}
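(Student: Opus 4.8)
The plan is to read both inequalities off Lemma~\ref{pdlemma} by a short induction on the projective dimension over the relevant corner algebra, carrying out the argument for (i) and obtaining (ii) by the evident symmetry that interchanges $A\leftrightarrow B$ and $M\leftrightarrow N$. First I would dispose of the trivial cases: if $\pd_A X=\infty$, or if $\pd_{\Lambda_{(0,0)}}(0,M,0,0)=\infty$, then the right-hand side of (i) is $\infty$ and there is nothing to prove. So I may assume $n:=\pd_A X<\infty$ and $\pd_{\Lambda_{(0,0)}}(0,M,0,0)<\infty$, and induct on $n$.

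For the base case $n=0$ the module $X$ is projective over $A$, so $\mt_A(X)=(X,M\otimes_A X,\iden_{M\otimes_A X},0)$ is a projective $\Lambda_{(0,0)}$-module by Proposition~\ref{proj}, and there is a short exact sequence of $\Lambda_{(0,0)}$-modules
\[
0 \lxr (0,M\otimes_A X,0,0) \lxr (X,M\otimes_A X,\iden_{M\otimes_A X},0) \lxr (X,0,0,0) \lxr 0 .
\]
This yields $\pd_{\Lambda_{(0,0)}}(X,0,0,0)\le 1+\pd_{\Lambda_{(0,0)}}(0,M\otimes_A X,0,0)$; and since $M\otimes_A X$ is a direct summand of a finite direct sum of copies of $M$, the functor $(0,-,0,0)$ being additive gives $\pd_{\Lambda_{(0,0)}}(0,M\otimes_A X,0,0)\le \pd_{\Lambda_{(0,0)}}(0,M,0,0)$, so $\pd_{\Lambda_{(0,0)}}(X,0,0,0)\le 1+\pd_{\Lambda_{(0,0)}}(0,M,0,0)$, which is the claim for $n=0$.

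For the inductive step I would take $n\ge 1$, assume the statement for $A$-modules of projective dimension $n-1$, and apply it to $\Omega^1_A(X)$ (which has projective dimension $n-1$ over $A$) to get $\pd_{\Lambda_{(0,0)}}(\Omega^1_A(X),0,0,0)\le n+\pd_{\Lambda_{(0,0)}}(0,M,0,0)$. Substituting this into Lemma~\ref{pdlemma}(i) gives
\[
\pd_{\Lambda_{(0,0)}}(X,0,0,0)\ \le\ 1+\maxx\bigl\{\,n+\pd_{\Lambda_{(0,0)}}(0,M,0,0),\ \pd_{\Lambda_{(0,0)}}(0,M,0,0)\,\bigr\}\ =\ n+1+\pd_{\Lambda_{(0,0)}}(0,M,0,0),
\]
that is, $\pd_A X+1+\pd_{\Lambda_{(0,0)}}(0,M,0,0)$, which closes the induction and proves (i); part (ii) follows by the same argument with $A,B,M,N$ interchanged. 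I do not expect a genuine obstacle here, since all the homological content has already been packaged into Lemma~\ref{pdlemma}; the only points that need care are the bookkeeping of the infinite values and the observation that, once the induction is running, the maximum in Lemma~\ref{pdlemma} is always attained at the syzygy term, so that only the single ``$+1$'' already present in that lemma survives into the final bound rather than one accumulating per syzygy.
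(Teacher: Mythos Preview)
Your proof is correct and follows essentially the same approach as the paper: both arguments reduce to iterated use of Lemma~\ref{pdlemma} along a projective $A$-resolution of $X$, with the terminal step handled once the syzygy is $A$-projective. The only differences are cosmetic---you phrase it as a formal induction on $\pd_A X$ and treat the base case via the explicit short exact sequence (the content of Lemma~\ref{lemmapd}), whereas the paper unrolls the iteration (``continuing in this fashion'') and handles the projective case by one further application of Lemma~\ref{pdlemma}.
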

\begin{proof} We only prove (i). If $\pd_AX$ is not finite, then the result
follows. Assume that $\pd_AX=n<\infty$. If we apply Lemma $5.6$ first to
$(X,0,0,0)$ and then to $(\Omega^1_A(X),0,0,0)$, we get
\[
\pd_{\Lambda_{(0,0)}}(X,0,0,0)\le 2 +
\maxx\lbrace\pd_{\Lambda_{(0,0)}}(\Omega^2_A(X),0,0,0),\pd_{\Lambda_{(0,0)}}(0,M,0,0)\rbrace
\] 
Continuing in this fashion, we get
\[
\pd_{\Lambda_{(0,0)}}(X,0,0,0)\le n +
\maxx\lbrace\pd_{\Lambda_{(0,0)}}(\Omega^n_A(X),0,0,0),\pd_{\Lambda_{(0,0)}}(0,M,0,0)\rbrace
\] 
By assumption the n-th syzygy $\Omega^n_A(X)$ is a projective $A$-module.
Applying Lemma $5.6$ to $(\Omega^n_A(X),0,0,0)$, we obtain the desired
result.
\end{proof}

We are now in a position to state our first set of results. For simplicity we write that a left $A$-module $X$ has an $A$-tight projective $\Lambda_{(0,0)}$-resolution meaning that the object $(X,0,0,0)$, as a left $\Lambda_{(0,0)}$-module, has an $A$-tight projective $\Lambda_{(0,0)}$-resolution. We make the same agreement for left $B$-modules having $B$-tight projective $\Lambda_{(0,0)}$-resolution. 

\begin{prop}\label{MNtight} 
Let $\Lambda_{(0,0)}=\bigl(\begin{smallmatrix}
A & _AN_B \\
_BM_A & B
\end{smallmatrix}\bigr)$ be a Morita ring which is an Artin algebra and let $X$ be an $A$-module and $Y$ be a $B$-module. If $M$ 
has a $B$-tight projective $\Lambda_{(0,0)}$-resolution, then
\[
\pd_{\Lambda_{(0,0)}}(X,0,0,0) \ \le \ \pd_AX+1 + \pd_BM
\]
If $N$ has an $A$-tight projective $\Lambda_{(0,0)}$-resolution, then
\[
\pd_{\Lambda_{(0,0)}}(0,Y,0,0) \ \le \ \pd_BY+1 + \pd_AN
\]
\end{prop}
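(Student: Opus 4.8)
The plan is to deduce both inequalities formally from Corollary~\ref{prepd}, using only the elementary observation — recorded among the easy facts following the definition of an $A$-tight projective $\Lambda_{(0,0)}$-module — that a module admitting an $A$-tight (respectively $B$-tight) projective $\Lambda_{(0,0)}$-resolution has the same projective dimension over $\Lambda_{(0,0)}$ as it has over $A$ (respectively $B$).

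For the first inequality I would start from Corollary~\ref{prepd}(i), which gives
\[
\pd_{\Lambda_{(0,0)}}(X,0,0,0) \ \le \ \pd_AX + 1 + \pd_{\Lambda_{(0,0)}}(0,M,0,0).
\]
It therefore suffices to show that $\pd_{\Lambda_{(0,0)}}(0,M,0,0)=\pd_BM$, and this is precisely the point at which the hypothesis enters. Viewing $M$ as a left $B$-module, the object $(0,M,0,0)$ has by assumption a $B$-tight projective $\Lambda_{(0,0)}$-resolution, so the $B$-tight form of the fact quoted above yields $\pd_{\Lambda_{(0,0)}}(0,M,0,0)=\pd_BM$. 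Substituting this into the displayed inequality gives $\pd_{\Lambda_{(0,0)}}(X,0,0,0) \le \pd_AX + 1 + \pd_BM$, as claimed.

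The second inequality is proved symmetrically. Corollary~\ref{prepd}(ii) gives
\[
\pd_{\Lambda_{(0,0)}}(0,Y,0,0) \ \le \ \pd_BY + 1 + \pd_{\Lambda_{(0,0)}}(N,0,0,0),
\]
and since $N$, regarded as a left $A$-module, has an $A$-tight projective $\Lambda_{(0,0)}$-resolution, the same fact (now in its $A$-tight form) gives $\pd_{\Lambda_{(0,0)}}(N,0,0,0)=\pd_AN$; substituting yields the asserted bound $\pd_{\Lambda_{(0,0)}}(0,Y,0,0)\le \pd_BY + 1 + \pd_AN$.

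There is no genuine obstacle in this argument: it is a formal consequence of the preceding lemmas. The only thing that deserves care is bookkeeping — applying the hypothesis to the correct module ($M$ regarded over $B$, $N$ regarded over $A$) and invoking the appropriate ($B$-tight, respectively $A$-tight) version of the statement that tight resolutions compute projective dimension. That statement itself rests on the exactness of $\mU_B$ (respectively $\mU_A$) on one side and on Proposition~\ref{tightclass}, applied over $B$ (respectively $A$), lifting a minimal projective $B$-resolution (respectively $A$-resolution) to a projective $\Lambda_{(0,0)}$-resolution of the same length on the other; since this has already been established, nothing further is needed.
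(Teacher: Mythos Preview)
Your proposal is correct and follows exactly the approach of the paper: apply Corollary~\ref{prepd} and then use that a $B$-tight (respectively $A$-tight) projective $\Lambda_{(0,0)}$-resolution forces $\pd_{\Lambda_{(0,0)}}(0,M,0,0)=\pd_BM$ (respectively $\pd_{\Lambda_{(0,0)}}(N,0,0,0)=\pd_AN$). The paper's own proof is just a one-line invocation of these two facts, so your version is simply a slightly expanded rendition of the same argument.
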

\begin{proof} The result follows from Corollary \ref{prepd} and
the fact that $(0,M,0,0)$ having a $B$-tight projective resolution
implies that $\pd_{\Lambda_{(0,0)}}(0,M,0,0)=\pd_BM$. Similarly we obtain the second inequality.
\end{proof}

\begin{thm}\label{MNtightthm}
Let $\Lambda_{(0,0)}=\bigl(\begin{smallmatrix}
A & _AN_B \\
_BM_A & B
\end{smallmatrix}\bigr)$
be a Morita ring which is an Artin algebra and suppose that $M$ has a $B$-tight projective $\Lambda_{(0,0)}$-resolution and $N$ has an $A$-tight projective
$\Lambda_{(0,0)}$-resolution. Then$\colon$
\[
\gld\Lambda_{(0,0)} \ \le \ \gld{A}+\gld{B}+1
\]
\end{thm}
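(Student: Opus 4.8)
The plan is to reduce the inequality to a statement about simple $\Lambda_{(0,0)}$-modules and then quote \propref{MNtight}. Recall that for an Artin algebra $\Gamma$ one has $\gld\Gamma = \max\{\pd_{\Gamma}S \mid S \text{ simple}\}$, because $\gld\Gamma = \pd_{\Gamma}(\Gamma/\Rad\Gamma)$ and $\Gamma/\Rad\Gamma$ is a finite direct sum of copies of the simple $\Gamma$-modules (see \cite{ARS}). So the first step is to list the simple $\Lambda_{(0,0)}$-modules. Since $\phi=\psi=0$, \propref{trivial} identifies $\Lambda_{(0,0)}$ with the trivial extension $(A\times B)\ltimes(M\oplus N)$; the ideal $M\oplus N$ squares to zero, hence lies in the radical, so the simple $\Lambda_{(0,0)}$-modules are exactly the simple $(A\times B)$-modules inflated along the ring surjection $\Lambda_{(0,0)}\to A\times B$. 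In the description of $\Mod{\Lambda_{(0,0)}}$ as $\M(\Lambda)$ these are the objects $(S,0,0,0)$ with $S$ a simple $A$-module together with the objects $(0,S',0,0)$ with $S'$ a simple $B$-module. (Alternatively this follows from \propref{simples} and the remark after it, noting that $\psi=0$ forces $\Psi_S=0$ and hence $\mc_A(S)=(S,0,0,0)$, and dually $\mc_B(S')=(0,S',0,0)$.)

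For the second step, assume $\gld A$ and $\gld B$ are both finite (otherwise the asserted inequality is trivial). Let $S$ be a simple $A$-module. Since $M$ has a $B$-tight projective $\Lambda_{(0,0)}$-resolution, the first inequality of \propref{MNtight} gives
\[
\pd_{\Lambda_{(0,0)}}(S,0,0,0) \ \le\ \pd_{A}S + 1 + \pd_{B}M \ \le\ \gld A + \gld B + 1 .
\]
Dually, if $S'$ is a simple $B$-module, then since $N$ has an $A$-tight projective $\Lambda_{(0,0)}$-resolution, the second inequality of \propref{MNtight} yields
\[
\pd_{\Lambda_{(0,0)}}(0,S',0,0) \ \le\ \pd_{B}S' + 1 + \pd_{A}N \ \le\ \gld A + \gld B + 1 .
\]
Taking the maximum over all simple $\Lambda_{(0,0)}$-modules and using the formula $\gld\Lambda_{(0,0)} = \max\{\pd_{\Lambda_{(0,0)}}S \mid S \text{ simple}\}$ from the first step, we obtain $\gld\Lambda_{(0,0)} \le \gld A + \gld B + 1$.

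I do not expect a serious obstacle, since all of the substantive work has already been carried out in \lemref{lemmapd}, \lemref{pdlemma}, Corollary~\ref{prepd} and \propref{MNtight}; the theorem is a formal consequence of those results plus the identification of the simples. The only place where the hypotheses of the theorem really bite is, inside \propref{MNtight}, the passage from the general estimate of Corollary~\ref{prepd} --- which carries the a priori uncontrolled term $\pd_{\Lambda_{(0,0)}}(0,M,0,0)$ --- to the clean bound $\pd_{B}M$; this uses the identity $\pd_{\Lambda_{(0,0)}}(0,M,0,0)=\pd_{B}M$, valid precisely because $M$ admits a $B$-tight projective $\Lambda_{(0,0)}$-resolution (and symmetrically for $N$). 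So beyond correctly pinning down the simple $\Lambda_{(0,0)}$-modules --- which is where $\phi=\psi=0$ is essential --- nothing delicate remains.
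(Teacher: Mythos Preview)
Your proposal is correct and follows essentially the same approach as the paper: identify the simple $\Lambda_{(0,0)}$-modules as $(S,0,0,0)$ and $(0,S',0,0)$ (the paper cites \propref{simples} directly, you go via the trivial extension description but also note this alternative), then apply \propref{MNtight} to each and take the maximum. The only cosmetic difference is that you explicitly dispose of the case where $\gld A$ or $\gld B$ is infinite, which the paper leaves implicit.
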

\begin{proof} 
Since $\phi=\psi=0$ it follows from Proposition \ref{simples} that 
the simple $\Lambda_{(0,0)}$-modules are of the form $(S,0,0,0)$, where $S$ is a simple $A$-module or of the form $(0,T,0,0)$, where $T$ is a simple $B$-module. Now
\[
\gld\Lambda_{(0,0)} \le \maxx\lbrace
\pd_{\Lambda_{(0,0)}}(S,0,0,0),\pd_{\Lambda_{(0,0)}}(0,T,0,0) \mid S\colon \text{simple
} A\text{-module}, \ T\colon \text{simple} \ B\text{-module}\rbrace
\]
By Proposition \ref{MNtight} we have $\pd_{\Lambda_{(0,0)}}(S,0,0,0)\le
\pd_AS+\pd_BM+1$. Thus, $\pd_{\Lambda_{(0,0)}}(S,0,0,0)\le \gld{A}
+\gld{B}+1$. Similarly, we infer that $\pd_{\Lambda_{(0,0)}}(0,T,0,0)\le \gld{A}
+\gld{B}+1$ and then the result follows.
\end{proof}

We provide two examples, the first of which shows that the
inequality of Theorem \ref{MNtightthm} is sharp and the second shows that the inequality can be proper.

\begin{exam}\label{sharp}{\rm Let $K$ be a field and $\mathcal{Q}$ the
quiver
\[
\xymatrix{
\stackrel{v_1}{\circ}\ar[d]^a&\stackrel{v_2}{\circ}\ar[r]^b&\stackrel{v_3}{\circ}\ar[r]^c&\stackrel{v_4}{\circ}\\
\stackrel{w_1}{\circ}\ar[r]^d&\stackrel{w_2}{\circ}&\stackrel{w_3}{\circ}\ar[l]^e\ar[r]^f&\stackrel{w_4}{\circ}\ar[ull]^g
}\] Let $I$ be the ideal in $K\mathcal{Q}$ generated by all paths of length 2
and let $\Lambda= K\mathcal{Q}/I$.  We see the global dimension of $\Lambda$
is 4.  Now set $\epsilon_1=v_1+v_2+v_3+v_4$ and
$\epsilon_2=w_1+w_2+w_3+w_4$.  View $\Lambda$ as the Morita ring
\[\begin{pmatrix} \epsilon_1\Lambda\epsilon_1 &
\epsilon_1\Lambda\epsilon_2\\
\epsilon_2\Lambda\epsilon_1 & \epsilon_2\Lambda\epsilon_2
\end{pmatrix}
\]
The global dimension of $\epsilon_1\Lambda\epsilon_1$ is 2 and the
global dimension of $\epsilon_2\Lambda\epsilon_2$ is 1. Thus
\[\gld{\Lambda}=\gld{\epsilon_1\Lambda\epsilon_1}
+\gld{\epsilon_2\Lambda\epsilon_2}+1
\]
Now $M=\epsilon_2\Lambda\epsilon_1$, which, as a left
$\epsilon_2\Lambda\epsilon_2$-module is isomorphic to the simple module
at $w_1$. We see that $N=\epsilon_1\Lambda\epsilon_2$, which, as a
left $\epsilon_1\Lambda\epsilon_1$-module is isomorphic to the
simple module at $v_2$.  The reader may check that $(0,M,0,0)$ and
$(N,0,0,0)$ have tight projective $\Lambda$-resolutions.  We note
that $\phi$ and $\psi$ are both $0$ for this example.\qed
}
\end{exam}

\begin{exam}{\rm Let $K$ be a field and $\mathcal{Q}$ the quiver
\[
\xymatrix{
\stackrel{v_1}{\circ}\ar[r]^a\ar[rd]_<<c&\stackrel{v_2}{\circ}\ar[r]^b&\stackrel{v_3}{\circ}\\
\stackrel{w_1}{\circ}\ar[ur]_>>d\ar[r]^e&\stackrel{w_2}{\circ}
}
\]
We again take $I$ to be the ideal generated by all paths of length $2$ and
set $\Lambda= K\mathcal{Q}/I$.  Now set $\epsilon_1=v_1+v_2+v_3$ and
$\epsilon_2=w_1+w_2$.  View $\Lambda$ as the Morita ring
\[
\begin{pmatrix} \epsilon_1\Lambda\epsilon_1 &
\epsilon_1\Lambda\epsilon_2\\
\epsilon_2\Lambda\epsilon_1 & \epsilon_2\Lambda\epsilon_2
\end{pmatrix}
\]
The reader may check that the hypotheses of Theorem \ref{MNtightthm} are
satisfied. But the global dimension of $ \Lambda$ is $2$ while the
global dimension of $ \epsilon_1\Lambda\epsilon_1$ is $2$ and the
global dimension of $ \epsilon_2\Lambda\epsilon_2$ is $1$.\qed
}
\end{exam}

We now turn to the case where either $(0,M,0,0)$ or $(N,0,0,0)$ does
not have a tight projective $\Lambda_{(0,0)}$-resolution. If $M$ is not a
projective $B$-module then a projective cover of $(0,M,0,0)$ is of
the form $(0,\beta)\colon (N\otimes_BQ,Q,0,\iden_{N\otimes Q} )\lxr
(0,M,0,0)$, where $\beta\colon Q\lxr M$ is projective $B$-cover of
$M$. In particular, $N\otimes_BQ$ is a sum of summands of $N$ over
which we have little control. The next bound results will require
that $M$, as a left $B$-module, is projective and $N$, as a left
$A$-module is projective.

We state a preliminary lemma.

\begin{lem}\label{tensorproj} Suppose $M$ is a $B$-$A$-bimodule which
is projective as a left $B$-module and $N$ is an $A$-$B$-bimodule
which is projective as a left $A$-module. Then
\begin{enumerate}
\item $M\otimes_A(N\otimes_BM)^{\stackrel{s}{\otimes_A}}$ is a
projective left $B$-module, for all $s\ge 0$.
\item $(N\otimes_BM)^{\stackrel{s}{\otimes_A}}$ is a
projective left $B$-module, for all $s\ge 1$.
\item $N\otimes_B(M\otimes_AN)^{\stackrel{t}{\otimes_B}}$ is a
projective left $A$-module, for all $t\ge 0$.
\item $(M\otimes_AN)^{\stackrel{t}{\otimes_B}}$ is a
projective left $A$-module, for all $t\ge 1$.

\end{enumerate}
\end{lem}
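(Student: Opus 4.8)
The plan is to isolate one elementary fact about tensoring with a bimodule that is one-sidedly projective, and then obtain all four assertions from a short alternating induction.

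\textbf{The key fact.} I would first record the following: if $L$ is a $B$-$A$-bimodule which is projective as a left $B$-module and $P$ is any projective left $A$-module, then $L\otimes_A P$ is a projective left $B$-module. Indeed, choosing a free left $A$-module $A^{(I)}$ with $A^{(I)}\cong P\oplus P'$, we get $L\otimes_A A^{(I)}\cong L^{(I)}$, a coproduct of copies of the projective left $B$-module $L$, hence projective over $B$; since $L\otimes_A-$ is additive, $L\otimes_A P$ is a direct summand of $L\otimes_A A^{(I)}$, so it too is projective over $B$. Taking $L=M$ gives: $M\otimes_A P$ is a projective left $B$-module whenever $P$ is a projective left $A$-module; applying the same statement with the roles of $A$ and $B$ interchanged and $L=N$ gives: $N\otimes_B Q$ is a projective left $A$-module whenever $Q$ is a projective left $B$-module.

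\textbf{Set-up and induction.} Write $P_s=(N\otimes_B M)^{\stackrel{s}{\otimes_A}}$, so $P_0=A$ and, by associativity of the tensor product, $P_{s+1}\cong N\otimes_B(M\otimes_A P_s)$; dually write $Q_t=(M\otimes_A N)^{\stackrel{t}{\otimes_B}}$, so $Q_0=B$ and $Q_{t+1}\cong M\otimes_A(N\otimes_B Q_t)$. For $s=0$, statement (i) is just $M\otimes_A A\cong M$, which is projective over $B$ by hypothesis, and for $t=0$, statement (iii) is $N\otimes_B B\cong N$, projective over $A$ by hypothesis. Now I would prove (i) and (ii) together by induction on $s$: assuming $M\otimes_A P_s$ is projective over $B$ (true for $s=0$), the key fact applied to $Q=M\otimes_A P_s$ shows $P_{s+1}\cong N\otimes_B(M\otimes_A P_s)$ is projective over $A$, which is (ii) at index $s+1$; then the key fact applied to $P=P_{s+1}$ shows $M\otimes_A P_{s+1}$ is projective over $B$, which is (i) at index $s+1$. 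This yields (i) for all $s\ge 0$ and (ii) for all $s\ge 1$. The argument for (iii) and (iv) is entirely symmetric, starting from $N\otimes_B Q_t$ projective over $A$ and using $Q_{t+1}\cong M\otimes_A(N\otimes_B Q_t)$: the key fact gives $Q_{t+1}$ projective over $B$, which is (iv) at index $t+1$, and then it gives $N\otimes_B Q_{t+1}$ projective over $A$, which is (iii) at index $t+1$.

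\textbf{Main obstacle.} There is no genuine obstacle here; it is a bookkeeping argument. The two points to be careful about are keeping the bimodule sides straight---so that each tensor product is formed over the correct ring and the one-sided projectivity available is precisely the one furnished by the hypothesis on $M$ (projective over $B$) or on $N$ (projective over $A$)---and the degenerate indices: $(N\otimes_B M)^{\stackrel{0}{\otimes_A}}=A$ and $(M\otimes_A N)^{\stackrel{0}{\otimes_B}}=B$ carry no module structure over the ring in question, which is why (ii) and (iv) are asserted only for $s\ge 1$ and $t\ge 1$, whereas the zeroth instances of (i) and (iii) coincide with the standing hypotheses on $M$ and $N$.
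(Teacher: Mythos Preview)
Your proof is correct and follows essentially the same route as the paper: isolate the fact that tensoring a projective module with a one-sidedly projective bimodule yields a projective module, then run an alternating induction. The only minor difference is in how the key fact is established---the paper decomposes a projective $A$-module into summands of the form $Ae$ for primitive idempotents $e$ (using the ambient Artin-algebra setting of the section), whereas you argue more generally via direct summands of free modules; both are standard and the induction is then identical, though you spell it out more carefully than the paper's terse ``the result now follows by induction.''
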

\begin{proof}
Suppose $P$ is a projective left $A$-module. Then $P$ is isomorphic
to a direct sum of indecomposable projective $A$-modules of the form
$Ae$, where $e$ is a primitive idempotent in $A$. It follows that
$M\otimes_AP$ is a direct sum of modules of the form $Me$. Since $M$
is assumed to be a projective left $B$-module, $Me$ is a projective
left $B$-module and, hence, $M\otimes_A P$ is a projective left
$B$-module. Similarly, if $Q$ is a projective left $B$-module then
$N\otimes_B Q$ is also a projective left $A$-module. The result now
follows by induction.
\end{proof}

The next result concerns tight projective $\Lambda_{(0,0)}$-modules.

\begin{lem}\label{tensortight} Suppose $M$ is a $B$-$A$-bimodule which
is projective as a left $B$-module and $N$ is an $A$-$B$-bimodule
which is projective as a left $A$-module. 
\begin{enumerate}
\item If $(0,M\otimes_A(N\otimes_BM)^{\stackrel{s}{\otimes_A}},0,0)$ is a
$B$-tight projective $\Lambda_{(0,0)}$-module, for some $s\ge 0$, then
$(0,(M\otimes_AN)^{\stackrel{s+1}\otimes_B},0,0)$ also is a
$B$-tight projective $\Lambda_{(0,0)}$-module.

\item If $((N\otimes_BM)^{\stackrel{s}{\otimes_A}},0,0,0)$ is an
$A$-tight projective $\Lambda_{(0,0)}$-module, for some $s\ge 0$, then
$(N\otimes_B(M\otimes_AN)^{\stackrel{s}\otimes_B},0,0,0)$ also is an
$A$-tight projective $\Lambda_{(0,0)}$-module.

\item If $((N\otimes_BM)^{\stackrel{s}{\otimes_A}},0,0,0)$ is an
$A$-tight projective $\Lambda_{(0,0)}$-module, for some $s\ge 0$, then
$(N\otimes_B(M\otimes_AN)^{\stackrel{s+1}\otimes_B},0,0,0)$ also is
an $A$-tight projective $\Lambda_{(0,0)}$-module.

\item If $(0,(M\otimes_AN)^{\stackrel{s}{\otimes_B}},0,0)$ is a
$B$-tight projective $\Lambda_{(0,0)}$-module, for some $s\ge 0$, then
$(0,M\otimes_A(N\otimes_BM)^{\stackrel{s}\otimes_A},0,0)$ also is a
$B$-tight projective $\Lambda_{(0,0)}$-module.
\end{enumerate}
\end{lem}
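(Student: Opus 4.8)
The plan is to prove all four parts by the same two-step template. The first ingredient is the characterization of tight projective modules that follows from Proposition~\ref{proj}: a $\Lambda_{(0,0)}$-module of the form $(0,Z,0,0)$ is a $B$-tight projective $\Lambda_{(0,0)}$-module if and only if $Z$ is projective as a left $B$-module and $N\otimes_B Z=0$, and dually $(Z,0,0,0)$ is an $A$-tight projective $\Lambda_{(0,0)}$-module if and only if $Z$ is projective as a left $A$-module and $M\otimes_A Z=0$. The second ingredient is the elementary principle already exploited in the proof of Lemma~\ref{tensorproj}: if $Z$ is a bimodule which is projective as a left $B$-module and $L$ is a projective left $B$-module, then, writing $L\cong\bigoplus_i Be_i$ with $e_i$ primitive idempotents, $Z\otimes_B L\cong\bigoplus_i Ze_i$ is a direct sum of left $B$-module summands of $Z$, hence again projective as a left $B$-module; and symmetrically over $A$. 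Both ingredients are used throughout, together with the standing hypotheses that ${}_BM$ and ${}_AN$ are projective.

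Step one is a purely formal regrouping of iterated tensor products by associativity. For part (i) one writes
\[
(M\otimes_A N)^{\stackrel{s+1}{\otimes_B}}\ \cong\ \bigl(M\otimes_A(N\otimes_B M)^{\stackrel{s}{\otimes_A}}\bigr)\otimes_A N,
\]
for part (ii), $N\otimes_B(M\otimes_A N)^{\stackrel{s}{\otimes_B}}\cong(N\otimes_B M)^{\stackrel{s}{\otimes_A}}\otimes_A N$, and for part (iv), $M\otimes_A(N\otimes_B M)^{\stackrel{s}{\otimes_A}}\cong(M\otimes_A N)^{\stackrel{s}{\otimes_B}}\otimes_B M$; in each case the right-hand bimodule is precisely the one whose tightness is assumed in the hypothesis, tensored on one side with $N$ or with $M$. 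For part (iii) I would first observe that $A$-tightness of $\bigl((N\otimes_B M)^{\stackrel{s}{\otimes_A}},0,0,0\bigr)$ propagates to index $s+1$ — indeed $(N\otimes_B M)^{\stackrel{s+1}{\otimes_A}}\cong\bigl((N\otimes_B M)^{\stackrel{s}{\otimes_A}}\otimes_A N\bigr)\otimes_B M$ is again projective as a left $A$-module by the principle above (applied first over $A$ with $N$, then over $B$ with $M$), while $M\otimes_A(N\otimes_B M)^{\stackrel{s+1}{\otimes_A}}\cong\bigl(M\otimes_A(N\otimes_B M)^{\stackrel{s}{\otimes_A}}\bigr)\otimes_A(N\otimes_B M)=0$ — and then part (iii) is exactly part (ii) applied with $s+1$ in place of $s$.

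Step two checks the two defining conditions for the regrouped module. Projectivity on the relevant side is immediate from the principle and the hypotheses ${}_BM,\ {}_AN$ projective: for instance in (i), $M\otimes_A(N\otimes_B M)^{\stackrel{s}{\otimes_A}}$ is projective as a left $B$-module by hypothesis, hence so is its tensor over $A$ with the projective left $A$-module $N$; the other parts are identical. The vanishing condition follows from the hypothesis by associativity: in (i),
\[
N\otimes_B\Bigl(\bigl(M\otimes_A(N\otimes_B M)^{\stackrel{s}{\otimes_A}}\bigr)\otimes_A N\Bigr)\ \cong\ \Bigl(N\otimes_B\bigl(M\otimes_A(N\otimes_B M)^{\stackrel{s}{\otimes_A}}\bigr)\Bigr)\otimes_A N\ =\ 0,
\]
since the inner factor vanishes by the $B$-tightness hypothesis; in (ii) one computes $M\otimes_A\bigl((N\otimes_B M)^{\stackrel{s}{\otimes_A}}\otimes_A N\bigr)\cong\bigl(M\otimes_A(N\otimes_B M)^{\stackrel{s}{\otimes_A}}\bigr)\otimes_A N=0$, and in (iv) one computes $N\otimes_B\bigl((M\otimes_A N)^{\stackrel{s}{\otimes_B}}\otimes_B M\bigr)\cong\bigl(N\otimes_B(M\otimes_A N)^{\stackrel{s}{\otimes_B}}\bigr)\otimes_B M=0$. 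The only point requiring genuine care is the bookkeeping in step one — keeping track of which tensor products are taken over $A$ and which over $B$, and hence which one-sided module structure survives each contraction — but I do not expect any real obstacle: everything comes down to applying the decomposition trick of Lemma~\ref{tensorproj} in the correct variance.
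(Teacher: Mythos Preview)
Your proof is correct and takes essentially the same approach as the paper: regroup by associativity so that the target module is the hypothesis module tensored on the right with $N$ or $M$, then use that tensoring with a left-projective module yields a direct sum of summands of the original. The paper argues only part (i) and concludes directly from the summand observation that $B$-tight projectivity is inherited, whereas you verify the projectivity and vanishing conditions separately (the latter via associativity) and handle (iii) by first propagating tightness to index $s+1$ and then invoking (ii) --- minor presentational variations only.
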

\begin{proof} 
We only prove part (i) since the proofs of the other parts are similar. Assume that
$(0,M\otimes_A(N\otimes_BM)^{\stackrel{s}{\otimes_A}},0,0)$ is a
$B$-tight projective $\Lambda_{(0,0)}$-module, for some $s\ge 0$.  Then
tensoring $M\otimes_A(N\otimes_BM)^{\stackrel{s}{\otimes_A}}$ on the
right by $\otimes_AN$, we obtain
$(M\otimes_AN)^{\stackrel{s+1}\otimes_B}$. The assumption that $N$
is a projective left $A$-module implies that
$(M\otimes_AN)^{\stackrel{s+1}\otimes_B}$ is a direct sum of
summands of $M\otimes_A(N\otimes_BM)^{\stackrel{s}{\otimes_A}}$. The
result now follows.
\end{proof}

We are now in a position to state the second result on bounding the
global dimension of $\Lambda_{(0,0)}$.

\begin{thm}\label{tensorthm}
Let $\Lambda_{(0,0)}=\bigl(\begin{smallmatrix}
A & _AN_B \\
_BM_A & B
\end{smallmatrix}\bigr)$ be a Morita ring which is an Artin algebra. Suppose that the global dimensions of $A$ and $B$ are finite, $M$ is a projective left $B$-module, and
$N$ is a projective left $A$-module.
\begin{enumerate}
\item If $((N\otimes_BM)^{\stackrel{s}{\otimes_A}},0,0,0)$ is an $A$-tight
projective $\Lambda_{(0,0)}$-module, for some $s\ge 1$, then$\colon$
\[\gld\Lambda_{(0,0)} \ \le \ \maxx\lbrace \gld{A}+2s, \gld{B}+2s+1\rbrace
\]
\item If $(0,M\otimes_A(N\otimes_BM)^{\stackrel{s}{\otimes_A}},0,0)$ is a $B$-tight
projective $\Lambda_{(0,0)}$-module, for some $s\ge 0$, then$\colon$
\[\gld\Lambda_{(0,0)} \ \le \ \maxx\lbrace \gld{A}+2s+1, \gld{B}+2(s+1)\rbrace
\]
\item If $(N\otimes_B(M\otimes_AN)^{\stackrel{s}{\otimes_B}},0,0,0)$ is an $A$-tight
projective $\Lambda_{(0,0)}$-module, for some $s\ge 0$, then$\colon$
\[\gld\Lambda_{(0,0)} \ \le \ \maxx\lbrace \gld{A}+2(s+1), \gld{B}+2s+1\rbrace
\]
\item If $(0,(M\otimes_AN)^{\stackrel{s}{\otimes_B}},0,0)$ is a $B$-tight
projective $\Lambda_{(0,0)}$-module, for some $s\ge 1$, then$\colon$
\[\gld\Lambda_{(0,0)} \ \le \ \maxx\lbrace \gld{A}+2s+1, \gld{B}+2s\rbrace
\]
\item If $((N\otimes_BM)^{\stackrel{s}{\otimes_A}},0,0,0)$ is an $A$-tight
projective $\Lambda_{(0,0)}$-module and if \linebreak
$(0,(M\otimes_AN)^{\stackrel{s}{\otimes_B}},0,0)$ is an $B$-tight
projective $\Lambda_{(0,0)}$-module, for some $s\ge 1$, then$\colon$
\[\gld\Lambda_{(0,0)} \ \le \ \maxx\lbrace \gld{A}+2s, \gld{B}+2s\rbrace
\]
\item If $(N\otimes_B(M\otimes_AN)^{\stackrel{s}{\otimes_B}},0,0,0)$ is an $A$-tight
projective $\Lambda_{(0,0)}$-module, and if \linebreak
$(0,M\otimes_A(N\otimes_BM)^{\stackrel{s}{\otimes_A}},0,0)$ is a
$B$-tight projective $\Lambda_{(0,0)}$-module, for some $s\ge 0$, then$\colon$
\[\gld\Lambda_{(0,0)} \ \le \ \maxx\lbrace \gld{A}+2s+1, \gld{B}+2s+1\rbrace
\]
\end{enumerate}
\end{thm}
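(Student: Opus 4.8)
The plan is to reduce the statement to upper bounds on the two projective dimensions $\pd_{\Lambda_{(0,0)}}(0,M,0,0)$ and $\pd_{\Lambda_{(0,0)}}(N,0,0,0)$. Since $\phi=\psi=0$, Proposition~\ref{simples} tells us that every simple $\Lambda_{(0,0)}$-module is $(S,0,0,0)$ with $S$ simple over $A$ or $(0,T,0,0)$ with $T$ simple over $B$, and $\gld\Lambda_{(0,0)}$ is the supremum of the projective dimensions of the simple $\Lambda_{(0,0)}$-modules. Applying Corollary~\ref{prepd}, together with $\pd_AS\le\gld A$ and $\pd_BT\le\gld B$ (legitimate since $\gld A$ and $\gld B$ are finite), one obtains
\[
\gld\Lambda_{(0,0)}\ \le\ \maxx\{\gld A+1+\pd_{\Lambda_{(0,0)}}(0,M,0,0),\ \gld B+1+\pd_{\Lambda_{(0,0)}}(N,0,0,0)\}.
\]
Thus it is enough, in each of the six cases, to produce the appropriate bound on each of these two projective dimensions.

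To that end I would iterate Lemma~\ref{lemmapd}. As $M$ is projective as a left $B$-module, either $(0,M,0,0)$ is $B$-tight (so $\pd_{\Lambda_{(0,0)}}(0,M,0,0)=0$) or Lemma~\ref{lemmapd} gives $\pd_{\Lambda_{(0,0)}}(0,M,0,0)=1+\pd_{\Lambda_{(0,0)}}(N\otimes_BM,0,0,0)$, and $N\otimes_BM$ is itself projective — now over $A$ — by Lemma~\ref{tensorproj}. Iterating, $(0,M,0,0)$ begins a chain whose $(2k-1)$-st term is $((N\otimes_BM)^{\stackrel{k}{\otimes_A}},0,0,0)$ and whose $2k$-th term is $(0,M\otimes_A(N\otimes_BM)^{\stackrel{k}{\otimes_A}},0,0)$; by Lemma~\ref{tensorproj} every term is a projective $\Lambda_{(0,0)}$-module and each application of Lemma~\ref{lemmapd} raises the projective dimension by one, so that if the $j$-th term of the chain is tight then $\pd_{\Lambda_{(0,0)}}(0,M,0,0)\le j$. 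Symmetrically $(N,0,0,0)$ begins a chain whose $(2k-1)$-st term is $(0,(M\otimes_AN)^{\stackrel{k}{\otimes_B}},0,0)$ and whose $2k$-th term is $(N\otimes_B(M\otimes_AN)^{\stackrel{k}{\otimes_B}},0,0,0)$, with the analogous remark.

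In each of (i)--(vi) the hypothesis says precisely that a prescribed term of one of these two chains is tight; the corresponding term of the other chain is then tight too — directly (in (v) and (vi), which impose two hypotheses), or by Lemma~\ref{tensortight}, or by an elementary reindexing of Lemma~\ref{tensortight} using that, for the projective bimodules $M$ and $N$, a vanishing such as $(M\otimes_AN)^{\stackrel{r}{\otimes_B}}=0$ forces the vanishings needed on the $A$-side, and symmetrically. Substituting the two resulting positions into the displayed inequality yields the asserted bound. For instance, in case (i), where $((N\otimes_BM)^{\stackrel{s}{\otimes_A}},0,0,0)$ is $A$-tight with $s\ge1$, this is the $(2s-1)$-st term of the $(0,M,0,0)$-chain, so $\pd_{\Lambda_{(0,0)}}(0,M,0,0)\le 2s-1$, while by Lemma~\ref{tensortight}(ii) the $2s$-th term of the $(N,0,0,0)$-chain is $A$-tight, so $\pd_{\Lambda_{(0,0)}}(N,0,0,0)\le 2s$; the displayed inequality then gives $\gld\Lambda_{(0,0)}\le\maxx\{\gld A+2s,\gld B+2s+1\}$, which is (i).

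The other five cases follow the same pattern, the only variation being whether the relevant tight term sits at an odd or an even position of each chain — this is what produces the bounds $\gld A+2s$, $\gld A+2s+1$ or $\gld A+2(s+1)$, and similarly with $B$. The one genuine obstacle I anticipate is exactly this case-by-case bookkeeping: for each of the six hypotheses one must identify the precise tensor power at which tightness first holds and the parity of its position in each chain, so that the six inequalities come out exactly as stated and not off by one. Beyond that the homological input consists only of Lemmas~\ref{lemmapd}, \ref{tensorproj}, \ref{tensortight}, Corollary~\ref{prepd} and Proposition~\ref{simples}.
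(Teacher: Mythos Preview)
Your proposal is correct and follows essentially the same route as the paper's proof: reduce to the simple modules via Proposition~\ref{simples}, bound $\pd_{\Lambda_{(0,0)}}(S,0,0,0)$ and $\pd_{\Lambda_{(0,0)}}(0,T,0,0)$ in terms of $\pd_{\Lambda_{(0,0)}}(0,M,0,0)$ and $\pd_{\Lambda_{(0,0)}}(N,0,0,0)$ (you cite Corollary~\ref{prepd}; the paper re-derives it by iterating Lemma~\ref{pdlemma}, which is the same thing), then iterate Lemma~\ref{lemmapd} along the tensor chain using Lemma~\ref{tensorproj} and transfer tightness to the companion chain via Lemma~\ref{tensortight}. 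One small wording slip: where you write ``by Lemma~\ref{tensorproj} every term is a projective $\Lambda_{(0,0)}$-module'' you mean that the underlying $A$- (resp.\ $B$-) module is projective, which is exactly the hypothesis needed in Lemma~\ref{lemmapd}; the terms are of course not projective over $\Lambda_{(0,0)}$ in general.
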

\begin{proof} 
Let $\gld{A}=d<\infty$ and $\gld{B}=e<\infty$.
We only prove part (i) with the remaining parts having similar
proofs. We assume that
$((N\otimes_BM)^{\stackrel{s}{\otimes_A}},0,0,0)$ is an $A$-tight
projective $\Lambda_{(0,0)}$-module, for some $s\ge 1$. First let $S$ be a
simple $A$-module.  By Lemma \ref{pdlemma} we have
$\pd_{\Lambda_{(0,0)}}(S,0,0,0)\le 1 +
\maxx\lbrace\pd_{\Lambda_{(0,0)}}(\Omega^1_A(S),0,0,0),\pd_{\Lambda_{(0,0)}}(0,M,0,0)\rbrace$. By applying Lemma \ref{pdlemma} again, this time to
$(\Omega^1_A(S),0,0,0)$, we get 
\[
\pd_{\Lambda_{(0,0)}}(S,0,0,0)\le 2 +
\maxx\lbrace\pd_{\Lambda_{(0,0)}}(\Omega^2_A(S),0,0,0),\pd_{\Lambda_{(0,0)}}(0,M,0,0)\rbrace
\]  
Continuing in this fashion, we get 
\[
\pd_{\Lambda_{(0,0)}}(S,0,0,0)\le d+
\maxx\lbrace\pd_{\Lambda_{(0,0)}}(\Omega^d_A(S),0,0,0),\pd_{\Lambda_{(0,0)}}(0,M,0,0)\rbrace
\]
Now $\Omega^d_A(S)$ is a projective $A$-module, so the next time we
apply Lemma \ref{pdlemma}, we obtain 
\[
\pd_{\Lambda_{(0,0)}}(S,0,0,0)\le d+1 + \pd_{\Lambda_{(0,0)}}(0,M,0,0)
\]
If $(0,M,0,0)$ is a $B$-tight projective $\Lambda$-module, then we are done.
Suppose that $(0,M,0,0)$ is not a $B$-tight projective module.
Since $M$ is a projective $B$-module, by Lemma \ref{lemmapd} it follows that $\pd_{\Lambda_{(0,0)}}(0,M,0,0)\le 1 +
\pd_{\Lambda_{(0,0)}}(N\otimes_BM,0,0,0)$. 
If $(N\otimes_BM,0,0,0)$ is an $A$-tight projective $\Lambda$-module, we are done.
Suppose that  $(N\otimes_BM,0,0,0)$ is not an $A$-tight projective $\Lambda$-module.
Since $N\otimes_BM$ is a
projective $A$-module by Lemma \ref{tensorproj}, we see again by Lemma \ref{lemmapd} that $\pd_{\Lambda_{(0,0)}}(0,M,0,0)\le 2 +
\pd_{\Lambda_{(0,0)}}(0,M\otimes_AN\otimes_BM,0,0)$. Continuing in this
fashion, we obtain
\[
\pd_{\Lambda_{(0,0)}}(0,M,0,0)\le 2s-1 +
\pd_{\Lambda_{(0,0)}}((N\otimes_BM)^{\stackrel{s}{\otimes_A}},0,0,0)
\]
By assumption, $((N\otimes_BM)^{\stackrel{s}{\otimes_A}},0,0,0)$ is
an $A$-tight projective $\Lambda_{(0,0)}$-module. Hence, we see that
\[
 \pd_{\Lambda_{(0,0)}}(S,0,0,0)\le d+2s=\gld{A}+2s \eqno(*)
\]
Now let $T$ be a simple $B$-module. By Lemma \ref{pdlemma} we have
\[
\pd_{\Lambda_{(0,0)}}(0,T,0,0)\le 1 +
\maxx\lbrace\pd_{\Lambda_{(0,0)}}(0,\Omega^1_B(T),0,0),\pd_{\Lambda_{(0,0)}}(N,0,0,0)\rbrace
\]
Continuing in a similar fashion to the first part of the proof, we obtain
\[
\pd_{\Lambda_{(0,0)}}(T,0,0,0)\le e+
\maxx\lbrace\pd_{\Lambda_{(0,0)}}(0,\Omega^e_B(T),0,0),\pd_{\Lambda_{(0,0)}}(N,0,0,0)\rbrace
\]
Now $N$ is a projective $A$-module, and we see by Lemma \ref{lemmapd} that $\pd_{\Lambda_{(0,0)}}(N,0,0,0)\le 1 +
\pd_{\Lambda_{(0,0)}}(0,M\otimes_AN,0,0)$. Again, following similar
arguments to the first part of the proof, we obtain
\[
\pd_{\Lambda_{(0,0)}}(N,0,0,0)\le 2s +
\pd_{\Lambda_{(0,0)}}(N\otimes_B(M\otimes_AN)^{\stackrel{s}{\otimes_B}},0,0,0)
\]
By Lemma \ref{tensortight},
$(N\otimes_B(M\otimes_AN)^{\stackrel{s}{\otimes_B}},0,0,0)$ is an
$A$-tight projective $\Lambda_{(0,0)}$-module, and we have
\[
\pd_{\Lambda_{(0,0)}}(0,T,0,0)\le e+ 2s+1=\gld{B}+2s+1 \eqno(**)
\]
Since $\Lambda_{(0,0)}$ is an Artin algebra and since from Proposition \ref{simples} a simple $\Lambda_{(0,0)}$-module is isomorphic to either a module of the form
$(S,0,0,0)$ or $(0,T,0,0)$, for some simple $A$-module $S$ or some
simple $B$-module $T$, part (1) follows from $(*)$ and $(**)$.
\end{proof}

We conclude this section with an example showing that the bounds in
the above theorem are sharp.

\begin{exam}{\rm
Let $K$ be a field and let $\mathcal{Q}$ be the quiver
\[
\xymatrix{ \stackrel{v_1}{\circ}\ar[d]&\stackrel{v_3}{\circ}\ar[d]&
\stackrel{v_5}{\circ}\\
\stackrel{v_2}{\circ}\ar[ur]&\stackrel{v_4}{\circ}\ar[ur]
 }\]
Let $\Lambda$ be the quotient $K\mathcal{Q}/I$, where $I$ is the ideal
generated by all paths of length $2$.  Let $\epsilon_1=v_1+v_3+v_5$
and $\epsilon_2=v_2+v_4$.  View $\Lambda$ as the Morita ring
\[
\begin{pmatrix} \epsilon_1\Lambda\epsilon_1 &
\epsilon_1\Lambda\epsilon_2\\
\epsilon_2\Lambda\epsilon_1 & \epsilon_2\Lambda\epsilon_2
\end{pmatrix}
\]
Using the notation $_iK_j$ to denote the simple $\Lambda$-module,
which on the left is isomorphic to the simple $\Lambda$-module at
vertex $v_i$, and on the right is isomorphic to the simple
$\Lambda$-module at vertex $v_j$, we see that
\[
M= \epsilon_2\Lambda\epsilon_1= _4K_3\oplus _2K_1, \text{ and } N=
\epsilon_1\Lambda\epsilon_2= _5K_4\oplus _3K_2.
\]
Now the global dimensions of $A=\epsilon_1\Lambda\epsilon_1$ and $B=
\epsilon_2\Lambda\epsilon_2$ are both $0$.  Clearly, $M$ is a
projective left $B$-module and $N$ is a projective left $A$-module.
We see that $N\otimes_B(M\otimes_AN)$ is isomorphic to $_5K_2$.
Moreover, $(N\otimes_B(M\otimes_AN),0,0,0)$ is an $A$-tight
projective $\Lambda$-module. Thus, we can apply part (3) of Theorem
\ref{tensorthm} with $s=1$ to get $\gld\Lambda\le 4$. But the
global dimension of $\Lambda$ is 4, and we have shown that the
inequality in part (3) is sharp.  This example can be adjusted to
get that all the inequalities are sharp. \qed
}\end{exam}

\subsection{Some Lower Bounds} 
In this subsection we provide some lower bounds for the global dimension of a Morita ring. 

\begin{lem}
Let $\Lambda_{(\phi,\psi)}$ be a Morita ring.
\begin{enumerate}
\item If the bimodule $_BM_A$ is flat as a right $A$-module then
$\pd_{A}{X}=\pd_{\Lambda_{(\phi,\psi)}}{\mt_{A}(X)}$.

\item If the bimodule $_AN_B$ is flat as a right $B$-module then
$\pd_{B}{Y}=\pd_{\Lambda_{(\phi,\psi)}}{\mt_{B}(Y)}$.

\end{enumerate}
\begin{proof}
Suppose that the bimodule $_BM_A$ is flat as a right $A$-module. Then the functor $M\otimes_A-\colon \Mod{A}\lxr \Mod{B}$ is exact and therefore the
functor $\mt_{A}\colon \Mod{A}\lxr \Mod{\Lambda_{(\phi,\psi)}}$ is exact. Let $X$ be a $A$-module with $\pd_{A}{X}=n$ and let $0 \lxr P_n \lxr \cdots \lxr P_0 \lxr X \lxr 0$ be the projective resolution of $X$. Then if we apply the exact functor $\mt_A$ we get that $\pd_{\Lambda_{(\phi,\psi)}}{\mt_{A}(X)}\leq n=\pd_{A}{X}$ since $\mt_{A}$ preserves projectives. Conversely suppose that $\pd_{\Lambda_{(\phi,\psi)}}{\mt_{A}(X)}=m<\infty$. Let $0 \lxr K_0 \lxr P_0 \lxr X \lxr 0 $ be an exact sequence with $P_0\in \Proj{A}$ and $K_0=\Ker{a_0}$. Since
$\mt_{A}$ is exact the sequence $0 \lxr \mt_{A}(K_0) \lxr \mt_{A}(P_0) \lxr \mt_{A}(X) \lxr 0 $ is exact. Now we continue with the same procedure. This means that
we take an epimorphism $a_1\colon P_1\lxr K_0$ with $P_1$ a projective $A$-module, $K_1=\Ker{a_1}$ and then we apply the functor $\mt_{A}$. After $m$-steps we obtain the exact sequence$\colon 0 \lxr \mt_{A}(K_{m-1}) \lxr \mt_{A}(P_{m-1}) \lxr \cdots \lxr \mt_{A}(P_0) \lxr \mt_{A}(X) \lxr 0$ where $\mt_{A}(K_{m-1})$ is projective since $\pd_{\Lambda_{(\phi,\psi)}}{\mt_{A}(X)}=m$. Then if we apply the functor $\mU_{A}$ we get the exact sequence$\colon 0 \lxr K_{m-1} \lxr P_{m-1} \lxr \cdots \lxr P_0 \lxr X \lxr 0$ and we claim that $\Omega^m(X)=K_{m-1}$ is projective in $\Mod{A}$. But this is straightforward since $\mt_{A}(K_{m-1})$ is projective. Thus we have $\pd_{A}{X}\leq m=\pd_{\Lambda_{(\phi,\psi)}}{\mt_{A}(X)}$. We infer that $\pd_A{X}=\pd_{\Lambda_{(\phi,\psi)}}{\mt_{A}(X)}$ and similarly we prove that $\pd_{B}{Y}=\pd{_{\Lambda_{(\phi,\psi)}}{\mt_{B}(Y)}}$ when the functor $N\otimes_B-\colon\Mod{B}\lxr \Mod{A}$ is exact.
\end{proof}
\end{lem}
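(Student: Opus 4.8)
The plan is to exploit the fact that flatness of $_BM_A$ as a right $A$-module makes the functor $M\otimes_A-\colon \Mod{A}\lxr \Mod{B}$ exact, hence the functor $\mt_A\colon \Mod{A}\lxr \Mod{\Lambda_{(\phi,\psi)}}$ is exact: an object $\mt_A(X)=(X,M\otimes_AX,\iden_{M\otimes X},\Psi_X)$ is acted upon component-wise, and a sequence in $\M(\Lambda)$ is exact exactly when both component sequences are (the functors $\mU_A,\mU_B$ are exact and jointly detect exactness), the second of which is controlled by flatness of $M$. Besides exactness, the two further ingredients are: $\mt_A$ is fully faithful by Proposition \ref{recollement}(i), it is a left adjoint by Proposition \ref{recollement}(ii) so commutes with direct sums, and by Proposition \ref{proj} it sends projective $A$-modules to projective $\Lambda_{(\phi,\psi)}$-modules; moreover $\mU_A\mt_A\simeq \iden_{\Mod{A}}$. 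Given all this, the equality of projective dimensions follows from a standard syzygy argument carried out twice.

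\emph{Step 1, the inequality $\leq$.} Assume $\pd_A X=n<\infty$ (else there is nothing to prove), and take a projective $A$-resolution $0\lxr P_n\lxr\cdots\lxr P_0\lxr X\lxr 0$. Applying the exact functor $\mt_A$ yields an exact complex $0\lxr \mt_A(P_n)\lxr\cdots\lxr \mt_A(P_0)\lxr \mt_A(X)\lxr 0$ of projective $\Lambda_{(\phi,\psi)}$-modules, so $\pd_{\Lambda_{(\phi,\psi)}}\mt_A(X)\le n=\pd_A X$.

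\emph{Step 2, the inequality $\geq$.} Assume $m:=\pd_{\Lambda_{(\phi,\psi)}}\mt_A(X)<\infty$ (else nothing to prove). Build a projective $A$-resolution of $X$ one step at a time: an epimorphism $P_0\epic X$ with kernel $K_0$, then $P_1\epic K_0$ with kernel $K_1$, and so on. Since $\mt_A$ is exact, applying it produces a projective resolution of $\mt_A(X)$ whose $i$-th syzygy is $\mt_A(K_{i-1})$. By the standard fact that in any projective resolution of a module of projective dimension $\le m$ the $m$-th syzygy is projective, $\mt_A(K_{m-1})$ is a projective $\Lambda_{(\phi,\psi)}$-module. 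I then claim $K_{m-1}$ is a projective $A$-module: choose an epimorphism $\pi\colon P\epic K_{m-1}$ with $P\in\Proj{A}$; exactness of $\mt_A$ makes $\mt_A(\pi)$ an epimorphism, which splits because $\mt_A(K_{m-1})$ is projective; full faithfulness of $\mt_A$ lifts the splitting to a splitting $t$ of $\pi$, so $K_{m-1}$ is a summand of $P$, hence projective. Therefore $\pd_A X\le m$, and combining with Step 1 gives $\pd_A X=\pd_{\Lambda_{(\phi,\psi)}}\mt_A(X)$. Part (ii) is obtained verbatim with $_AN_B$ flat as a right $B$-module, $N\otimes_B-$ exact, and $\mt_B$ in place of $\mt_A$.

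The one delicate point, and the step I expect to require the most care, is the implication ``$\mt_A(K_{m-1})$ projective $\Rightarrow$ $K_{m-1}$ projective'', i.e.\ that $\mt_A$ reflects projectivity; the splitting argument above settles it using only exactness and full faithfulness of $\mt_A$. Alternatively one can note that every projective $\Lambda_{(\phi,\psi)}$-module is a direct summand of some $\mt_A(P)\oplus \mt_B(Q)$ with $P\in\Proj A$, $Q\in\Proj B$ (from the decomposition $\Lambda_{(\phi,\psi)}\simeq \mt_A(A)\oplus \mt_B(B)$ of Proposition \ref{proj}) and apply $\mU_A$. Everything else is routine bookkeeping with syzygies.
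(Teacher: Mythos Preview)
Your proof is correct and follows essentially the same approach as the paper: use flatness of $M_A$ to make $\mt_A$ exact, push a projective $A$-resolution forward for the inequality $\pd_{\Lambda_{(\phi,\psi)}}\mt_A(X)\le\pd_A X$, and for the reverse inequality build an $A$-resolution of $X$ step by step, apply $\mt_A$, and use $\pd_{\Lambda_{(\phi,\psi)}}\mt_A(X)=m$ to conclude that $\mt_A(K_{m-1})$ is projective, whence $K_{m-1}$ is projective. The only difference is that the paper simply asserts that last implication is ``straightforward'' (after noting $\mU_A\mt_A=\iden$), whereas you supply an explicit splitting argument via full faithfulness of $\mt_A$; your argument is a clean way to justify that step and is exactly in the spirit of the paper's proof.
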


As a consequence of the above result we have the following lower bound.

\begin{prop}\cite[Lemma $1.2$]{Loustaunau: homol}
Let $\Lambda_{(\phi,\psi)}$ be a Morita ring and suppose that $M_A$ is a flat right $A$-module and $N_B$ is a flat right $B$-module. Then$\colon$
\[
\gld{\Lambda_{(\phi,\psi)}} \ \geq \ \maxx{\{\gld{A},\gld{B}\}}
\]
\end{prop}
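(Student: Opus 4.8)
The plan is to deduce this immediately from the preceding Lemma, which already contains all the substantive content. First I would observe that since $M_A$ is flat as a right $A$-module, part (i) of the Lemma gives $\pd_{A}{X} = \pd_{\Lambda_{(\phi,\psi)}}{\mt_{A}(X)}$ for every $A$-module $X$. By the very definition of global dimension we have $\pd_{\Lambda_{(\phi,\psi)}}{\mt_{A}(X)} \leq \gld{\Lambda_{(\phi,\psi)}}$, and hence $\pd_{A}{X} \leq \gld{\Lambda_{(\phi,\psi)}}$ for all $X \in \Mod{A}$. Taking the supremum over all $A$-modules $X$ yields $\gld{A} \leq \gld{\Lambda_{(\phi,\psi)}}$.

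Next, by the completely symmetric argument using part (ii) of the Lemma (with $N_B$ flat as a right $B$-module and the exact functor $\mt_{B}\colon \Mod{B}\lxr \Mod{\Lambda_{(\phi,\psi)}}$), one gets $\pd_{B}{Y} = \pd_{\Lambda_{(\phi,\psi)}}{\mt_{B}(Y)} \leq \gld{\Lambda_{(\phi,\psi)}}$ for every $B$-module $Y$, and therefore $\gld{B} \leq \gld{\Lambda_{(\phi,\psi)}}$. Combining the two inequalities gives $\maxx\{\gld{A},\gld{B}\} \leq \gld{\Lambda_{(\phi,\psi)}}$, which is the claim.

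There is essentially no obstacle: the only point worth a word is the degenerate case where $\gld{\Lambda_{(\phi,\psi)}}$ is infinite, in which the asserted inequality holds trivially, so one may freely assume it is finite when invoking the equality of projective dimensions above. Everything else is a one-line consequence of the Lemma.
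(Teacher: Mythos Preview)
Your proof is correct and is precisely the argument the paper has in mind: the proposition is stated as an immediate consequence of the preceding Lemma, and your derivation via $\pd_{A}X=\pd_{\Lambda_{(\phi,\psi)}}\mt_{A}(X)\leq \gld\Lambda_{(\phi,\psi)}$ (and the symmetric statement for $B$) is exactly how one extracts the bound. The remark about the infinite case is fine but unnecessary, since the equality of projective dimensions from the Lemma holds without any finiteness assumption.
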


\subsection{Comparing Tight Resolutions} 
In this subsection we discuss the assumption of Theorem \ref{MNtightthm} about tight resolutions. Our aim is to compare our result with some well known bounds for the global dimension of trivial extensions rings. 
 
Let $\Lambda_{(0,0)}$ be a Morita ring regarded as an Artin algebra. Then from Proposition \ref{trivial} we have the isomorphism of rings $\Lambda_{(0,0)}\simeq (A\times B)\ltimes M\oplus N$, where $(A\times B)\ltimes M\oplus N$ is the trivial
extension ring of $A\times B$ by the $(A\times B)$-$(A\times
B)$-bimodule $M\oplus N$. Then the module category $\smod{\Lambda_{(0,0)}}$ is equivalent to the trivial extension of abelian categories $(\smod{A}\times \smod{B})\ltimes H$, see \cite{FGR}, where $H$ is the endofunctor
\[
H\colon \smod{A}\times \smod{B}\lxr \smod{A}\times \smod{B}, \ H(X,Y)=(N\otimes_BY,M\otimes_AX)
\]
Suppose that $_AN_B$ has an $A$-tight projective $\Lambda_{(0,0)}$-resolution and $_BM_A$ has a $B$-tight projective $\Lambda_{(0,0)}$-resolution. This implies that we have projective resolutions $\cdots \lxr {_AP_1} \lxr {_AP_0} \lxr {_AN} \lxr 0$ and $\cdots \lxr {_BQ_1} \lxr {_BQ_0} \lxr {_BM} \lxr 0$  
such that $M\otimes_AP_i=0$ and $N\otimes_BQ_i=0$. If we aply the functor $M\otimes_A-$ to the projective resolution of $N$ we get that $M\otimes_AN=0$. Similarly if we apply the functor $N\otimes_B-$ to the projective resolution of $M$ we obtain that $N\otimes_BM=0$. Also we derive that $\Tor_i^A(M,N)=0$ and $\Tor_i^B(N,M)=0$ for every $i\geq 0$. Since $M\otimes_AN=0$ if and only if $M\otimes_AN\otimes_B-=0$ and $N\otimes_BM=0$ if and only if $N\otimes_BM\otimes_A-=0$ it follows that $H^2=0$. From Corollary $7.6$ of \cite{Bel} it follows that if the left derived functor $\mL_iH^j(H(P,Q))=0$ for every $i,j\geq 1$ and $P\in \proj{A}, Q\in \proj{B}$, then
\[  
\gld{\Lambda_{(0,0)}}\le c(H)+2\cdot \max\{\gld{A},\gld{B} \} \eqno(*)
\]
where $c(H)=\min\{\kappa\in \mathbb N:H^{\kappa+1}=0 \}$ is the nilpotency class of $H$. From the projective resolutions of $N$ and $M$ we have the following projective resolution of $H(A,B)\colon$
\[
\xymatrix@C=0.5cm{
  \cdots \ar[rr] && ({_AP_1},{_BQ_1}) \ar[rr]^{} && ({_AP_0},{_BQ_0}) \ar[rr]^{} && H(A,B) \ar[rr]^{} && 0   }
\]
in $\smod{{A\times B}}$. Hence if we apply the functor $H$ to the above exact sequence we obtain the zero complex. We infer that $\mL_iH^j(H(P,Q))=0$ for every $i, j\geq 1$ and $P\in \proj{A}, Q\in \proj{B}$. Hence the assumption of Corollary $7.6$ of \cite{Bel} is satisfied and so we have the bound of the relation $(*)$. In particular we obtain that $\gld{\Lambda_{(0,0}}\le 1+2\cdot \maxx\{\gld{A},\gld{B} \}$ since $H^2=0$. But the bound of Theorem \ref{MNtightthm} is $\gld{\Lambda_{(0,0)}}\le \gld{A}+\gld{B}+1$ which is better than the above bound but the assumption of Corollary $7.6$ of \cite{Bel} is weaker than the assumption of tight resolutions for $N$ and $M$. Note also that since $\mL_iH(H(P,Q))=0$ for every $i\geq 0$ and $P\in \proj{A}, Q\in \proj{B}$, there exists an explicit formula for the global dimension of $\Lambda_{(0,0)}$, see Corollary $7.17$ of \cite{Bel}. 

\subsection{Trivial Extensions of Artin Algebras}
The main property of the assumption that $M$ has a $B$-tight projective
$\Lambda_{(0,0)}$-resolution and $N$ has an $A$-tight projective
$\Lambda_{(0,0)}$-resolution is that $\pd_{\Lambda_{(0,0)}}(0,M,0,0)=\pd_B{M}$ and $\pd_{\Lambda_{(0,0)}}(N,0,0,0)=\pd_A{N}$. Our aim in this subsection is to prove a version of Theorem \ref{MNtightthm} for a trivial extension of Artin algebras $\Lambda=A\ltimes N$. We start by recalling some basic facts for trivial extensions. We refer to \cite{FGR} for more details. 

Let $\Lambda=A\ltimes N$ be a trivial extension of Artin algebras. The objects of $\smod{\Lambda}$ are pairs $(X,f)$ where $X\in \smod{A}$ and $f\colon N\otimes_A X\lxr X$ is an $A$-morphism such that $N{\otimes_A}f\circ f=0$. A morphism $a\colon (X,f)\lxr (Y,g)$ is an $A$-morphism $a\colon X\lxr Y$ such that $f\circ a=N{\otimes_A}a\circ g$. We recall also the following functors. The functor $\mt\colon \smod{A}\lxr \smod{\Lambda}$ is defined by
$\mt(X)=(X\oplus (N{\otimes_A}X),t_X)\in \smod{\Lambda}$ on the $A$-modules $X$, where 
$t_X=\bigl(\begin{smallmatrix}
0 & \iden_{N\otimes X} \\
0 & 0
\end{smallmatrix}\bigr)\colon (N{\otimes_A}X)\oplus (N{\otimes_A}N{\otimes_A}X)\lxr X\oplus (N{\otimes_A}X)$ and given an $A$-morphism $a\colon X\lxr Y$ then
$\mt(a)=\bigl(\begin{smallmatrix}
           a & 0 \\
           0 & F(a) \\
         \end{smallmatrix}\bigr)\colon \mt(X)\lxr \mt(Y)$ is a $\Lambda$-morphism. 
The functor $\mz\colon \smod{A}\lxr \smod{\Lambda}$ is defined by
$\mz(X)=(X,0)\in \smod{\Lambda}$ on the $A$-modules $X$ and if $a\colon X\lxr Y$ is an $A$-morphism, then $\mz(a)=a$. 

We need the following result which is the analogue of Lemma \ref{pdlemma}.

\begin{lem} Let $\Lambda=A\ltimes N$ be a trivial extension of Artin algebras and let $X$ be an $A$-module. Then$\colon$
\[
\pd_{\Lambda}\mz(X)\le 1 +
\maxx\lbrace\pd_{\Lambda}\mz(\Omega^1_A(X)),\pd_{\Lambda}\mz(N)\rbrace 
\]
\end{lem}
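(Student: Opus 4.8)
The plan is to mimic the proof of Lemma~\ref{pdlemma}(i), working in the trivial extension $\Lambda = A\ltimes N$ in place of the Morita ring $\Lambda_{(0,0)}$. First I would take a projective $A$-cover $\alpha\colon P\lxr X$ with kernel $\Omega^1_A(X)$, so that there is a short exact sequence $0\lxr \Omega^1_A(X)\xrightarrow{k} P\xrightarrow{\alpha} X\lxr 0$ in $\smod{A}$. Applying the exact functor $\mt$ (note $\mt$ is exact, being built from the exact functors $\iden$ and $N\otimes_A-$ on the summands, since over a trivial extension one usually has $N$ flat — but in any case $\mt$ is a composite of additive exact functors on the relevant pieces, which is the standard fact from \cite{FGR}), and then pulling back along the natural comparison with $\mz$, I want to realize $\mz(X)$ as the cokernel of a map between a projective $\Lambda$-module and something built from $\mz(\Omega^1_A(X))$ and $\mz(N)$.

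Concretely, the key step is to produce a short exact sequence of $\Lambda$-modules
\[
0 \lxr \mz(N\otimes_A P) \lxr E \lxr \mz(X) \lxr 0
\]
where $E$ sits in a short exact sequence $0\lxr \mz(\Omega^1_A(X))\lxr E \lxr \mt(\text{something projective})\lxr 0$, or more directly to realize $\mt(P)$ (a projective $\Lambda$-module, since $\mt$ carries projectives to projectives) as an extension of $\mz(X)$ by a module whose projective dimension is controlled by $\pd_\Lambda \mz(\Omega^1_A(X))$ and $\pd_\Lambda\mz(N)$. The model to follow is exactly the two short exact sequences displayed in the proof of Lemma~\ref{pdlemma}: first
\[
0 \lxr \mz(N\otimes_A P) \lxr (\Omega^1_A(X),\, \text{structure map}) \lxr \mz(\Omega^1_A(X)) \lxr 0,
\]
and then a short exact sequence with middle term $\mt(P)$ projective expressing $(\Omega^1_A(X),\ldots)$ as its kernel over $\mz(X)$. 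Since $N\otimes_A P$ is a direct sum of summands of $N$, one gets $\pd_\Lambda \mz(N\otimes_A P)\le \pd_\Lambda \mz(N)$, and then $\pd_\Lambda \mz(X)\le 1 + \pd_\Lambda(\Omega^1_A(X),\ldots) \le 1 + \max\lbrace \pd_\Lambda \mz(\Omega^1_A(X)), \pd_\Lambda \mz(N)\rbrace$, which is the claim.

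The main obstacle I expect is bookkeeping the $\Lambda$-module structure maps correctly: over $A\ltimes N$ a module is a pair $(Z,f)$ with $f\colon N\otimes_A Z\lxr Z$ and $N\otimes_A f\circ f = 0$, and I must check that the intermediate object (the analogue of $(\Omega^1_A(X), M\otimes k, 0)$ in Lemma~\ref{pdlemma}, here something like $(\Omega^1_A(X)\oplus (N\otimes_A P),\ t)$ built from $k$ and the canonical map) genuinely satisfies the nilpotency condition $N\otimes_A f\circ f=0$ and that the two exhibited sequences are exact sequences of $\Lambda$-modules, not merely of $A$-modules. Once the structure maps are pinned down, exactness is immediate from exactness of the underlying $A$-module sequences together with exactness of $N\otimes_A-$ on the projective pieces, and the inequality follows formally, exactly as in Corollary~\ref{prepd}'s use of Lemma~\ref{pdlemma}.
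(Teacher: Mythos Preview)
Your approach is correct and essentially identical to the paper's: take a projective $A$-cover $P\to X$, exhibit $\mz(X)$ as a quotient of the projective $\Lambda$-module $\mt(P)$ with kernel the object $(\Omega^1_A(X)\oplus (N\otimes_A P),\,\text{structure map})$, then split that kernel by a second short exact sequence into $\mz(N\otimes_A P)$ and $\mz(\Omega^1_A(X))$, and finish using that $N\otimes_A P$ is a sum of summands of $N$. One small caveat: drop the claim that $\mt$ is exact (this requires $N$ flat, which is not assumed) --- you don't need it, since the first exact sequence is verified directly on underlying $A$-modules and structure maps, exactly as you anticipate in your discussion of the ``main obstacle.''
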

\begin{proof} 
Let $\alpha\colon P\lxr X$ be a projective $A$-cover of $X$ with
kernel $\Omega^1_A(X)$. Then we have a short exact sequence of $\Lambda$-modules
\[
\xymatrix{
 & (N\otimes_A\Omega^1_A(X))\oplus(N\otimes_AN\otimes_AP) \ar[d]^{\bigl(\begin{smallmatrix}
0 & N\otimes k \\
0 & 0
\end{smallmatrix}\bigr)} \ar[rr]^{\bigl(\begin{smallmatrix}
N\otimes k & 0 \\
0 & 1
\end{smallmatrix}\bigr)  } && (N\otimes_AP)\oplus (N\otimes_AN\otimes_AP)  \ar[d]_{\bigl(\begin{smallmatrix}
0 & 1 \\
0 & 0
\end{smallmatrix}\bigr)} \ar@{->>}[rr]^{ \ \ \ \ \ \bigl(\begin{smallmatrix}
N\otimes \alpha \\
0
\end{smallmatrix}\bigr) } &&
N\otimes_AX \ar[d]^{0}                   &      \\
     &  \Omega^1_A(X)\oplus(N\otimes_AP) \ \ar@{>->}[rr]^{\bigl(\begin{smallmatrix}
k & 0 \\
0 & 1
\end{smallmatrix}\bigr)} && P\oplus(N\otimes_AP) \ar@{->>}[rr]^{ \ \ \ \ \ \bigl(\begin{smallmatrix}
 \alpha \\
 0
\end{smallmatrix}\bigr) }  &&  X &    }
\]
in which the middle term is a projective $\Lambda$-module. It
follows that 
\[
\pd_{\Lambda}\mz(X)\le 1+
\pd_{\Lambda}\big((N\otimes_A\Omega^1_A(X))\oplus (N\otimes_AN\otimes_AP)\lxr \Omega^1_A(X)\oplus (N\otimes_AP)\big) \eqno(1)
\]  
Next we note that we have the following exact commutative diagram
\[
\xymatrix{
 & N\otimes_AN\otimes_AP \ar[d]^{0} \ \ar@{>->}[rr]^{\bigl(\begin{smallmatrix}
0 & 1
\end{smallmatrix}\bigr) \ \ \ \ \ \ \ \  } && (N\otimes_A\Omega^1_A(X))\oplus(N\otimes_AN\otimes_AP)  \ar[d]_{\bigl(\begin{smallmatrix}
0 & N\otimes k \\
0 & 0
\end{smallmatrix}\bigr)} \ar@{->>}[rr]^{ \ \ \ \ \ \ \ \bigl(\begin{smallmatrix}
1 \\
0
\end{smallmatrix}\bigr) } &&
N\otimes_A \Omega_A^1(X) \ar[d]^{0}                      \\
     & N\otimes_AP \ \ar@{>->}[rr]^{\bigl(\begin{smallmatrix}
0 & 1
\end{smallmatrix}\bigr) \ \ \ \ \ \ \ \ } && \Omega_A^1(X)\oplus(N\otimes_AP) \ar@{->>}[rr]^{ \ \ \ \ \ \ \ \bigl(\begin{smallmatrix}
 1 \\
 0
\end{smallmatrix}\bigr) }  &&  \Omega_A^1(X)      }
\]
and so we have 
\[
\pd_{\Lambda}\big((N\otimes_A\Omega^1_A(X))\oplus (N\otimes_AN\otimes_AP)\to \Omega^1_A(X)\oplus (N\otimes_AP)\big)  \le \maxx\{
\pd_{\Lambda}\mz(N\otimes_AP), \pd_{\Lambda}\mz(\Omega_A^1(X)) \} \ (2)  
\]
Since we have that $N\otimes_AP$ is direct sum of summands of $N$ it follows that $\pd_{\Lambda}\mz(N\otimes_AP) \le \pd_{\Lambda}\mz(N)$. Hence the result follows from the relations $(1)$ and $(2)$.
\end{proof}

We have the following result and its consequence.

\begin{prop}
Let $\Lambda=A\ltimes N$ be a trivial extension of Artin algebras. Then$\colon$ 
\[
\gld{\Lambda} \ \le \ \gld{A}+\pd_{\Lambda}\mz(N)+1
\]
\begin{proof}
Let $X$ be an $A$-module. We will first prove that 
\[
\pd_{\Lambda}\mz(X)\le \pd_AX+
\pd_{\Lambda}\mz(N)+1  \eqno(*)
\]
If $\pd_AX$ is not finite, then the result
follows. Assume that $\pd_AX=n$. If we apply Lemma $5.18$ first to
$\mz(X)$ and then to $\mz(\Omega^1_A(X))$, we get $\pd_{\Lambda}\mz(X)\le 2 +
\maxx\lbrace\pd_{\Lambda}\mz(\Omega^2_A(X)),\pd_{\Lambda}\mz(N)\rbrace$. 
Continuing in this fashion, we obtain 
\[
\pd_{\Lambda}\mz(X)\le n +
\maxx\lbrace\pd_{\Lambda}\mz(\Omega^n_A(X)),\pd_{\Lambda}\mz(N)\rbrace
\]
By assumption, $\Omega^n_A(X)$ is a projective $A$-module.
Applying again Lemma $5.18$ to $\mz(\Omega^n_A(X))$ we obtain the relation $(*)$. Recall from \cite{FGR} that the simple $\Lambda$-modules are of the form $\mz(S)$, where $S$ is a simple $A$-module. Then from the relation $(*)$ we have $\pd_{\Lambda}\mz(S)\le \pd_AS+\pd_{\Lambda}\mz(N)+1$. Thus $\pd_{\Lambda}\mz(S)\le \gld{A}+\pd_{\Lambda}\mz(N)+1$ and so the result follows.
\end{proof}
\end{prop}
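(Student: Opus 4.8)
The plan is to reduce the computation of $\gld{\Lambda}$ to bounding the projective dimensions of the simple $\Lambda$-modules, and then to run an induction on syzygies driven by Lemma $5.18$, exactly along the lines of the proof of Theorem~\ref{MNtightthm}. Since $\Lambda$ is an Artin algebra, $\gld{\Lambda}$ is the supremum of the projective dimensions of the simple $\Lambda$-modules (see \cite{ARS}), and by \cite{FGR} every simple $\Lambda$-module has the form $\mz(S)$ for a simple $A$-module $S$. Hence it suffices to prove that $\pd_{\Lambda}\mz(S)\le \gld{A}+\pd_{\Lambda}\mz(N)+1$ for every simple $A$-module $S$; if either $\gld{A}$ or $\pd_{\Lambda}\mz(N)$ is infinite there is nothing to prove, so I may assume both are finite.

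The crux is the inequality
\[
\pd_{\Lambda}\mz(X) \ \le \ \pd_A X+\pd_{\Lambda}\mz(N)+1 \qquad \text{for every } A\text{-module } X.
\]
To establish it, put $n=\pd_A X$ (finite, since $\gld{A}<\infty$) and apply Lemma $5.18$ successively to $\mz(X),\mz(\Omega^1_A(X)),\dots,\mz(\Omega^{n-1}_A(X))$; chaining the resulting inequalities (using repeatedly that $1+\maxx\{a,b\}\ge b$) yields
\[
\pd_{\Lambda}\mz(X) \ \le \ n+\maxx\{\pd_{\Lambda}\mz(\Omega^n_A(X)),\pd_{\Lambda}\mz(N)\}.
\]
Here $\Omega^n_A(X)$ is a projective $A$-module, but $\mz$ does not send $A$-projectives to $\Lambda$-projectives (the indecomposable projective $\Lambda$-modules are the $\mt(P)$), so one more application of Lemma $5.18$ is needed. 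Since $\Omega^1_A(\Omega^n_A(X))=0$ and $\mz(0)=0$, this last step gives $\pd_{\Lambda}\mz(\Omega^n_A(X))\le 1+\pd_{\Lambda}\mz(N)$, and substituting back proves the displayed inequality.

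Finally, applying this inequality to a simple $A$-module $S$ and using $\pd_A S\le \gld{A}$ gives $\pd_{\Lambda}\mz(S)\le \gld{A}+\pd_{\Lambda}\mz(N)+1$; taking the supremum over all simple $A$-modules $S$ yields the assertion. The only step requiring genuine care is the transition through the final syzygy: one must not terminate the induction at the (projective) module $\Omega^n_A(X)$ but perform one extra application of Lemma $5.18$ — this is precisely where the additive ``$+1$'' in the bound is produced, and it is the main (though mild) obstacle in the argument.
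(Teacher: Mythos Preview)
Your proof is correct and follows essentially the same approach as the paper: both reduce to bounding $\pd_{\Lambda}\mz(S)$ for simple $A$-modules $S$, establish the key inequality $\pd_{\Lambda}\mz(X)\le \pd_AX+\pd_{\Lambda}\mz(N)+1$ by iterating Lemma~5.18 along the $A$-syzygies of $X$, and obtain the extra ``$+1$'' from one final application of Lemma~5.18 to the projective module $\Omega^n_A(X)$. Your exposition is slightly more explicit (spelling out that $\Omega^1_A(\Omega^n_A(X))=0$ and justifying the chaining via $1+\maxx\{a,b\}\ge b$), but there is no substantive difference.
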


\begin{cor}
Let $\Lambda=A\ltimes N$ be a trivial extension of Artin algebras such that $\pd_AN=\pd_{\Lambda}\mz(N)$. Then$\colon$ 
\[
\gld{\Lambda} \ \le \ 2\cdot \gld{A}+1
\]
\end{cor}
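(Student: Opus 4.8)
The plan is to combine the immediately preceding Proposition (giving $\gld{\Lambda} \le \gld{A} + \pd_{\Lambda}\mz(N) + 1$ for $\Lambda = A \ltimes N$) with the extra hypothesis $\pd_A N = \pd_{\Lambda}\mz(N)$. First I would invoke the Proposition to obtain
\[
\gld{\Lambda} \ \le \ \gld{A} + \pd_{\Lambda}\mz(N) + 1.
\]
Then, using the hypothesis, I would substitute $\pd_{\Lambda}\mz(N) = \pd_A N$ to get $\gld{\Lambda} \le \gld{A} + \pd_A N + 1$.

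The remaining step is the observation that $\pd_A N \le \gld{A}$, which is immediate since the global dimension of $A$ bounds the projective dimension of every $A$-module (in particular $N$, viewed as a left $A$-module). Substituting this into the previous inequality yields
\[
\gld{\Lambda} \ \le \ \gld{A} + \gld{A} + 1 \ = \ 2\cdot\gld{A} + 1,
\]
which is the desired bound. (One should note that if $\gld{A} = \infty$ the statement is vacuous, so there is nothing to check in that case; the argument above is phrased for $\gld{A}$ finite, but the inequality holds trivially otherwise.)

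There is essentially no obstacle here: the Corollary is a routine specialization of the Proposition, the only inputs being the trivial bound $\pd_A N \le \gld{A}$ and the hypothesis equating $\pd_A N$ with $\pd_{\Lambda}\mz(N)$. The one thing worth flagging is that the hypothesis $\pd_A N = \pd_{\Lambda}\mz(N)$ is exactly the analogue, in the trivial extension setting, of the tightness assumption used in Theorem~\ref{MNtightthm}; it is what makes the passage from the syzygy-by-syzygy estimate of Lemma~5.18 to a clean numerical bound possible, and no further work beyond citing the Proposition is needed.
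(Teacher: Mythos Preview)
Your proposal is correct and is precisely the intended argument: the paper states the Corollary without proof, as an immediate consequence of the preceding Proposition together with the obvious bound $\pd_A N \le \gld A$. Nothing further is needed.
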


\section{Gorenstein Artin Algebras}
In this section we investigate when a Morita ring, which is an Artin algebra, is Gorenstein. Moreover we determine the Gorenstein-projective modules over the matrix algebra with $A=M=N=B=\Lambda$, where $\Lambda$ is an Artin algebra. Recall from \cite{AR:applications}, \cite{AR:cm} that an Artin algebra $\Lambda$ is called \textsf{Gorenstein} if $\id {_\Lambda\Lambda}<\infty$ and $\id\Lambda_{\Lambda}<\infty$. Equivalently $(\proj{\Lambda})^{<\infty}=(\inj{\Lambda})^{<\infty}$, where $(\proj{\Lambda})^{<\infty}$, resp. $(\inj{\Lambda})^{<\infty}$, is the full subcategory of $\smod{\Lambda}$ consisting of the $\Lambda$-modules of finite projective, resp. injective, dimension.

We start with the next result which describes the left derived functors of $\mt_A$, $\mt_B$ and gives also some useful isomorphisms for $\Ext$ homology groups. 

\begin{lem}
Let $\Lambda_{(\phi,\psi)}$ be a Morita ring.
\begin{enumerate}
\item For every $n\geq 1$ we have the following natural isomorphisms$\colon$ 
\[
\xymatrix{
 \mU_{B}\mL_n\mt_{A}(-)  \ \ar[r]^{\simeq  } & \ \Tor_n^A(M,-) 
 } \ \ \ \ \text{and} \ \ \ \ \ \mU_{A}\mL_n\mt_{A}(-)=0 
\]

\item For every $n\geq 1$ we have the following natural isomorphisms$\colon$
\[
\xymatrix{
 \mU_{A}\mL_n\mt_{B}(-)  \ \ar[r]^{\simeq } & \ \Tor_n^B(N,-)  
 } \ \ \ \ \text{and} \ \ \ \ \ \mU_{B}\mL_n\mt_{B}(-)=0
\]

\item If $\Tor_i^A(M,X)=0$, $\forall 1\leq i\leq n$, then we have an isomorphism$\colon$
\[
\xymatrix{
 {\Ext}_{\Lambda_{(\phi,\psi)}}^{i}(\mt_{A}(X),(X',Y',f',g')) \ \ar[r]^{ \ \ \ \ \ \ \ \ \ \ \ \simeq} & \ {\Ext}_{A}^{i}(X,X') }
\]
for every $1\leq i\leq n$ and $(X',Y',f',g')\in \smod{\Lambda_{(\phi,\psi)}}$.

\item If $\Tor_i^B(N,Y)=0$, $\forall 1\leq i\leq n$, then we have an isomorphism$\colon$
\[
\xymatrix{
 {\Ext}_{\Lambda_{(\phi,\psi)}}^{i}(\mt_{B}(Y),(X',Y',f',g')) \ \ar[r]^{ \ \ \ \ \ \ \ \ \ \ \ \simeq} & \ {\Ext}_{B}^{i}(Y,Y') }
\]
for every $1\leq i\leq n$ and $(X',Y',f',g')\in \smod{\Lambda_{(\phi,\psi)}}$.
\end{enumerate}
\begin{proof}
(i) Let $X$ be an $A$-module and let $0 \lxr K_0 \stackrel{i_0}{\lxr} P_0 \stackrel{a_0}{\lxr} X \lxr 0 $ be an exact sequence with $P_0$ a projective $A$-module and $K_0=\Ker{a_0}$. Since $\mL_1\mt_{A}(P_0)=0$ we derive the following exact sequence$\colon$
\[
\xymatrix{
  0 \ar[r] & \mL_1\mt_{A}(X) \ar[r]^{} & \mt_{A}(K_0) \ar[r]^{\mt_{A}(i_0) \ } & \mt_{A}(P_0) \ar[r]^{\mt_{A}(a_0) \ } & \mt_{A}(X) \ar[r] & 0 } 
\]  
Then we have $\mL_1\mt_{A}(X)\simeq \Ker{\mt_{A}(i_0)}=(0,\Ker{(M\otimes_Ai_0)},0,0)\simeq(0,\Tor_1^A(M,X),0,0)$. Continuing as above we infer that
$\mL_n\mt_{A}(X)\simeq (0,\Tor_n^A(M,X),0,0 ), \ \forall \, n\geq 1$, and then (i) follows. Similarly we show that $\mL_n\mt_{B}(Y)\simeq (\Tor_n^B(N,Y),0,0,0 )$ and hence we deduce (ii).

(iii) Let $X$ be an $A$-module and let $\cdots \lxr P_1 \lxr P_0 \lxr X \lxr 0$ be a projective resolution of $X$. Since $\Tor_i^A(M,X)=0$, for
$1\leq i\leq n$, it follows from (i) that $\mL_i\mt_{A}(X)=0$ for every $1\leq i\leq n$. This implies that the sequence $\cdots \lxr \mt_{A}(P_{n+1}) \lxr \mt_{A}(P_n) \lxr \cdots \lxr \mt_{A}(P_0) \lxr \mt_{A}(X) \lxr 0$ is part of a projective resolution of $\mt_{A}(X)$. Let $(X',Y',f',g')$ be a $\Lambda_{(\phi,\psi)}$-module. Then using the adjoint pair $(\mt_{A},\mU_{A})$ we have the following commutative diagram$\colon$
\[  
\xymatrix{
  (\mt_A(X),(X',Y',f',g')) \ar[d]_{\simeq} \ \ar@{>->}[r]^{} & (\mt_A(P_0),(X',Y',f',g')) \ar[d]_{\simeq}
  \ar[r]^{} & (\mt_A(P_1),(X',Y',f',g')) \ar[d]_{\simeq} \ar[r]^{} & \cdots  \\
  \Hom_{A}(X,X') \ \ar@{>->}[r]^{} & \Hom_{A}(P_0,X') \ar[r]^{} & \Hom_{A}(P_1,X') \ar[r]^{} & \cdots   } 
\]
Hence we have the isomorphism ${\Ext}_{\Lambda_(\phi,\psi)}^i(\mt_{A}(X),(X',Y',f',g'))\simeq
{\Ext}_{A}^i(X,X')$ for every $1\leq i\leq n$. Similarly using (ii) we get the isomorphism of (iv).
\end{proof}
\end{lem}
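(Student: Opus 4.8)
The plan is to read everything off a single projective resolution in the first variable, using that $\mt_A$ (resp. $\mt_B$), being left adjoint to the exact functor $\mU_A$ (resp. $\mU_B$) by \propref{recollement}, is right exact and carries projective $A$-modules to projective $\Lambda_{(\phi,\psi)}$-modules — indeed $\mt_A(P)$ is projective by \propref{proj}. So for an $A$-module $X$ with projective resolution $\cdots\lxr P_1\lxr P_0\lxr X\lxr 0$ in $\Mod{A}$, the complex $\mt_A(P_\bullet)$ computes the left derived functors $\mL_n\mt_A(X)=H_n(\mt_A(P_\bullet))$, and likewise for $\mt_B$.

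For (i) and (ii): First I would apply the exact functors $\mU_A$ and $\mU_B$ to $\mt_A(P_\bullet)$. Since an exact functor commutes with homology, $\mU_A\mL_n\mt_A(X)=H_n(\mU_A\mt_A(P_\bullet))=H_n(P_\bullet)=0$ for $n\geq 1$ because $P_\bullet$ is a resolution, while $\mU_B\mL_n\mt_A(X)=H_n(\mU_B\mt_A(P_\bullet))=H_n(M\otimes_A P_\bullet)=\Tor_n^A(M,X)$; naturality in $X$ is inherited from functoriality of projective resolutions. In fact this gives the sharper $\mL_n\mt_A(X)\simeq(0,\Tor_n^A(M,X),0,0)$ and, symmetrically, $\mL_n\mt_B(Y)\simeq(\Tor_n^B(N,Y),0,0,0)$, so (ii) is just (i) with $A,B$, $M,N$, $\mt_A,\mt_B$ interchanged. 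Alternatively one can dimension-shift along the exact sequence $0\lxr \mL_1\mt_A(X)\lxr \mt_A(K_0)\lxr \mt_A(P_0)\lxr \mt_A(X)\lxr 0$ arising from $0\lxr K_0\lxr P_0\lxr X\lxr 0$ with $P_0$ projective.

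For (iii) and (iv): Suppose $\Tor_i^A(M,X)=0$ for $1\leq i\leq n$. Then by (i), $\mL_i\mt_A(X)=0$ for $1\leq i\leq n$, which says precisely that the complex $\cdots\lxr \mt_A(P_1)\lxr \mt_A(P_0)$, augmented by $\mt_A(P_0)\lxr\mt_A(X)$, is exact at $\mt_A(P_0),\dots,\mt_A(P_n)$ and at $\mt_A(X)$; since each $\mt_A(P_i)$ is a projective $\Lambda_{(\phi,\psi)}$-module, it therefore computes $\Ext^i_{\Lambda_{(\phi,\psi)}}(\mt_A(X),-)$ correctly for $0\leq i\leq n$. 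Applying $\Hom_{\Lambda_{(\phi,\psi)}}(-,(X',Y',f',g'))$ and using the adjunction $(\mt_A,\mU_A)$, the natural isomorphisms $\Hom_{\Lambda_{(\phi,\psi)}}(\mt_A(P_i),(X',Y',f',g'))\simeq \Hom_A(P_i,\mU_A(X',Y',f',g'))=\Hom_A(P_i,X')$ are compatible with the differentials, hence assemble into an isomorphism of cochain complexes between $\Hom_{\Lambda_{(\phi,\psi)}}(\mt_A(P_\bullet),(X',Y',f',g'))$ and $\Hom_A(P_\bullet,X')$; since the latter computes $\Ext^*_A(X,X')$, passing to cohomology in degrees $\leq n$ gives $\Ext^i_{\Lambda_{(\phi,\psi)}}(\mt_A(X),(X',Y',f',g'))\simeq \Ext^i_A(X,X')$ for $1\leq i\leq n$. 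Part (iv) is the mirror argument with (ii) and the adjunction $(\mt_B,\mU_B)$.

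The only point that needs care — not a genuine obstacle — is the bookkeeping in (iii)–(iv): one must check that vanishing of $\mL_i\mt_A(X)$ merely for $i\leq n$ (equivalently, exactness of the complex only at $\mt_A(P_0),\dots,\mt_A(P_n)$) already forces the $\Ext$-isomorphism through degree $n$, and that the adjunction isomorphisms genuinely commute with the coboundary maps. Both are standard once (i) is in hand. I expect parts (i)–(ii), the identification of the derived functors of $\mt_A$ and $\mt_B$, to carry essentially all of the content, with (iii)–(iv) following formally.
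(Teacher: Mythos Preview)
Your proposal is correct and follows essentially the same approach as the paper: for (i)--(ii) the paper argues by dimension-shifting along the short exact sequence $0\lxr K_0\lxr P_0\lxr X\lxr 0$ (which you list as an alternative), while for (iii)--(iv) both you and the paper observe that the vanishing of $\mL_i\mt_A(X)$ makes $\mt_A(P_\bullet)$ the beginning of a projective resolution of $\mt_A(X)$, then identify the Hom complexes via the adjunction $(\mt_A,\mU_A)$. Your direct computation of $\mU_A\mL_n\mt_A$ and $\mU_B\mL_n\mt_A$ by commuting the exact functors $\mU_A,\mU_B$ past homology is a mild streamlining of the paper's dimension shift, but not a genuinely different argument.
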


The following main result of this section gives a sufficient condition for a Morita ring to be Gorenstein. 

\begin{thm}\label{GorensteinMoritaring}
Let $\Lambda_{(\phi,\psi)}$ be a Morita ring which is an Artin algebra such that the adjoint pair of functors $(M\otimes_A-,\Hom_B(M,-))$ induces an equivalence
\[
\xymatrix@C=0.5cm{
M\otimes_A-\colon (\proj{A})^{<\infty} \ \ar@<-.7ex>[rr]_-{} && \ (\inj{B})^{<\infty} \ \colon \Hom_B(M,-) \ \ar@<-.7ex>[ll]_{\simeq  \ \ \ }}
\]
and the adjoint pair of functors $(N\otimes_B-,\Hom_A(N,-))$ induces an equivalence
\[
\xymatrix@C=0.5cm{
N\otimes_B-\colon (\proj{B})^{<\infty} \ \ar@<-.7ex>[rr]_-{} && \ (\inj{A})^{<\infty} \ \colon \Hom_A(N,-) \ \ar@<-.7ex>[ll]_{\simeq  \ \ \ }}
\]
Then the Morita ring $\Lambda_{(\phi,\psi)}$ is Gorenstein.
\begin{proof}
We will show that $(\proj{\Lambda_{(\phi,\psi)}})^{<\infty}=(\inj{\Lambda_{(\phi,\psi)}})^{<\infty}$. In oder to prove our claim it suffices to show that any projective $\Lambda_{(\phi,\psi)}$-module has finite injective dimension and any injective $\Lambda_{(\phi,\psi)}$-module has finite projective dimension. Thus from Proposition \ref{proj} and Proposition \ref{inj} it suffices to show that ${\id}_{\Lambda_{(\phi,\psi)}}\mt_A(P)<\infty$, ${\id}_{\Lambda_{(\phi,\psi)}}\mt_B(Q)<\infty$, ${\pd}_{\Lambda_{(\phi,\psi)}}\mh_A(I)<\infty$ and ${\pd}_{\Lambda_{(\phi,\psi)}}\mh_B(J)<\infty$ for any $P\in \proj{A}$, $Q\in \proj{B}$, $I\in \inj{A}$ and $J\in \inj{B}$. Let $I\in \inj{A}$. By hypothesis the counit $\varepsilon'_I\colon N\otimes_B\Hom_A(N,I)\lxr I$ is an isomorphism and consider the $\Lambda_{(\phi,\psi)}$-modules $\mh_{A}(I)=(I,\Hom_A(N,I),\delta'_{M\otimes I}\circ \Hom_A(N,\Psi_I),\epsilon'_I)$ and $\mt_B(\Hom_A(N,I))=(N\otimes_B\Hom_A(N,I),\Hom_A(N,I),\Phi_{\Hom_A(N,I)},\iden_{N\otimes_B\Hom_A(N,I)})$. Since $\mh_{A}(I)$ is a $\Lambda_{(\phi,\psi)}$-module we have the following commutative diagram$\colon$
\[
\xymatrix{
  M\otimes_A N\otimes_B \Hom_A(N,I) \ar[d]_{\phi\otimes \iden_{\Hom_A(N,I)}} \ar[r]^{ \ \ \ \ \ \ \ \ \ \ M\otimes \varepsilon'_I } &  M\otimes_AI \ar[d]^{\delta'_{M\otimes I}\circ \Hom_A(N,\Psi_I)}     \\
  B\otimes_B \Hom_A(N,I)    \ar[r]^{ \ \ \simeq} & \Hom_A(N,I)                  }
\]
and therefore we have the following map$\colon$
\[
(\varepsilon'_I, \iden_{\Hom_A(N,I)})\colon \mt_B(\Hom_A(N,I))\lxr \mh_A(I) \eqno(*)
\]
which is an isomorphism of $\Lambda_{(\phi,\psi)}$-modules. Since $I$ is an injective $A$-module it follows that the $B$-module $\Hom_A(N,I)$ has finite projective dimension. Let $0\lxr P_n\lxr\cdots \lxr P_1\lxr P_0\lxr \Hom_A(N,I)\lxr 0$ be a projective resolution of $\Hom_A(N,I)$ in $\smod{B}$. Since the functor $N\otimes_B-$ is an equivalence restricted to the subcategory $(\proj{B})^{<\infty}$ it follows that the complex
$0 \lxr N\otimes_BP_n \lxr \cdots \lxr N\otimes_BP_0 \lxr N\otimes_B\Hom_A(N,I) \lxr 0$ is exact. This implies that $\Tor_n^{B}(N,\Hom_A(N,I))=0$, $\forall n\geq 1$, and then from Lemma $6.1$ we have the following isomorphism$\colon$
\[
\xymatrix@C=0.5cm{
   \Ext^n_{\Lambda_{(\phi,\psi)}}\big(\mt_B(\Hom_A(N,I)), (X,Y,f,g) \big) \ar[rr]^{ \ \ \ \ \ \ \ \ \ \ \simeq} && \Ext_{B}^n\big(\Hom_A(N,I),Y\big)   }
\]
for every $n\geq 1$ and $(X,Y,f,g)\in \smod{\Lambda_{(\phi,\psi)}}$. Since $\pd_B\Hom_A(N,I)<\infty$ it follows from the above isomorphism that $\pd_{\Lambda_{(\phi,\psi)}}\mt_B(\Hom_A(N,I))<\infty$. Hence from the relation $(*)$ we infer that the projective dimension of $\mh_A(I)$ is finite. Similarly we prove that ${\id}_{\Lambda_{(\phi,\psi)}}\mt_A(P)<\infty$, ${\id}_{\Lambda_{(\phi,\psi)}}\mt_B(Q)<\infty$ and ${\pd}_{\Lambda_{(\phi,\psi)}}\mh_B(J)<\infty$. We infer that $(\proj{\Lambda_{(\phi,\psi)}})^{<\infty}=(\inj{\Lambda_{(\phi,\psi)}})^{<\infty}$ and therefore the Morita ring $\Lambda_{(\phi,\psi)}$ is Gorenstein.
\end{proof}
\end{thm}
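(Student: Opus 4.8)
The plan is to show $(\proj{\Lambda_{(\phi,\psi)}})^{<\infty}=(\inj{\Lambda_{(\phi,\psi)}})^{<\infty}$ by verifying that every indecomposable projective $\Lambda_{(\phi,\psi)}$-module has finite injective dimension and every indecomposable injective $\Lambda_{(\phi,\psi)}$-module has finite projective dimension. By Proposition~\ref{proj} and Proposition~\ref{inj} the indecomposable projectives are the modules $\mt_A(P)$ and $\mt_B(Q)$ for $P\in\proj{A}$, $Q\in\proj{B}$, and the indecomposable injectives are the modules $\mh_A(I)$ and $\mh_B(J)$ for $I\in\inj{A}$, $J\in\inj{B}$. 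So it suffices to prove $\id_{\Lambda_{(\phi,\psi)}}\mt_A(P)<\infty$, $\id_{\Lambda_{(\phi,\psi)}}\mt_B(Q)<\infty$, $\pd_{\Lambda_{(\phi,\psi)}}\mh_A(I)<\infty$ and $\pd_{\Lambda_{(\phi,\psi)}}\mh_B(J)<\infty$; by symmetry it is enough to treat one projective case and one injective case.

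For the injective case I would take $I\in\inj{A}$. The hypothesis that $N\otimes_B-\colon(\proj{B})^{<\infty}\to(\inj{A})^{<\infty}$ is a quasi-inverse equivalence (with quasi-inverse $\Hom_A(N,-)$) gives that the counit $\varepsilon'_I\colon N\otimes_B\Hom_A(N,I)\to I$ is an isomorphism and that $\Hom_A(N,I)\in(\proj{B})^{<\infty}$. Using that $\mh_A(I)$ is a $\Lambda_{(\phi,\psi)}$-module one checks that the associativity diagram forces $(\varepsilon'_I,\iden_{\Hom_A(N,I)})$ to be an isomorphism of $\Lambda_{(\phi,\psi)}$-modules $\mt_B(\Hom_A(N,I))\xrightarrow{\simeq}\mh_A(I)$. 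Next take a finite projective resolution $0\to P_n\to\cdots\to P_0\to\Hom_A(N,I)\to 0$ in $\smod{B}$; applying the exact functor $N\otimes_B-$ (exact on $(\proj{B})^{<\infty}$ since it is an equivalence there) shows $\Tor_i^B(N,\Hom_A(N,I))=0$ for all $i\ge 1$. Then Lemma~6.1(iv) yields $\Ext^n_{\Lambda_{(\phi,\psi)}}(\mt_B(\Hom_A(N,I)),(X,Y,f,g))\simeq\Ext^n_B(\Hom_A(N,I),Y)$ for all $n\ge 1$ and all $\Lambda_{(\phi,\psi)}$-modules $(X,Y,f,g)$, so the finiteness of $\pd_B\Hom_A(N,I)$ forces $\pd_{\Lambda_{(\phi,\psi)}}\mt_B(\Hom_A(N,I))<\infty$, hence $\pd_{\Lambda_{(\phi,\psi)}}\mh_A(I)<\infty$ via the isomorphism $(\ast)$. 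The case $\pd_{\Lambda_{(\phi,\psi)}}\mh_B(J)<\infty$ is entirely symmetric, using the equivalence $M\otimes_A-$ on $(\proj{A})^{<\infty}$.

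For the projective case, dually: take $P\in\proj{A}$. The equivalence $M\otimes_A-\colon(\proj{A})^{<\infty}\to(\inj{B})^{<\infty}$ sends $P$ to an injective $B$-module $J:=M\otimes_A P\in\inj{B}\subseteq(\inj{B})^{<\infty}$ (a projective module having finite projective dimension $0$), and the unit $\delta_P\colon P\to\Hom_B(M,M\otimes_A P)$ is an isomorphism. As in the proof of Proposition~3.7, one verifies that $(\delta_P,\iden_{M\otimes_A P})$ (or its inverse) gives an isomorphism of $\Lambda_{(\phi,\psi)}$-modules $\mt_A(P)\xrightarrow{\simeq}\mh_B(M\otimes_A P)$. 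Now apply the already-established fact that $\mh_B(J)$ has finite projective dimension—wait, here we need injective dimension. So instead, dualize the whole argument above: work with injective coresolutions and the dual of Lemma~6.1 (the $\Ext$-isomorphism for $\mh$-modules applied to injective coresolutions), or simply pass to $\Lambda_{(\phi,\psi)}^{\op}$, which is again a Morita ring of the required form with the roles of the adjoint pairs, projectives and injectives interchanged, so $\id_{\Lambda_{(\phi,\psi)}}\mt_A(P)<\infty$ becomes the statement $\pd_{\Lambda^{\op}}(\text{an injective})<\infty$, already proved. I expect the main obstacle to be the careful bookkeeping that makes $(\varepsilon'_I,\iden)$ (and its dual $(\delta_P,\iden)$) an honest morphism in $\M(\Lambda)$—that is, checking commutativity of the two structure diagrams defining a $\Lambda_{(\phi,\psi)}$-module morphism, which amounts to unraveling the definitions of $\Psi$, $\Phi$, the (co)units $\epsilon',\delta$, and the associativity constraints exactly as in the computation at the end of the proof of Proposition~3.7; everything else is a formal consequence of Lemma~6.1 and the hypotheses.
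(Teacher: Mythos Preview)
Your approach is essentially identical to the paper's: reduce to the four finiteness claims via Propositions~\ref{proj} and~\ref{inj}, identify $\mh_A(I)\simeq\mt_B(\Hom_A(N,I))$ via the counit, extract Tor-vanishing from the equivalence, and invoke Lemma~6.1(iv) to transfer finite projective dimension; the paper likewise dispatches the remaining three cases with a ``similarly''.

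One small slip worth flagging: in your projective case you assert that $M\otimes_A P\in\inj{B}$, but the hypothesis only gives $M\otimes_A P\in(\inj{B})^{<\infty}$---the equivalence need not carry projectives to injectives, only modules of finite projective dimension to modules of finite injective dimension. You catch this yourself and correctly note that the fix is either to dualize Lemma~6.1 (right derived functors of $\mh_A,\mh_B$ and $\Ext$-vanishing in place of $\Tor$-vanishing) or to pass to $\Lambda_{(\phi,\psi)}^{\op}$; either route works and is what the paper's ``similarly'' implicitly invokes.
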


\begin{rem}
\begin{enumerate}
\item Let $\Lambda_{(\phi,\psi)}$ be a Morita ring and assume as above that the adjoint pair of functors $(M\otimes_A-,\Hom_B(M,-))$ induces quasi-inverse equivalences between $(\proj{A})^{<\infty}$ and $(\inj{B})^{<\infty}$, and the adjoint pair of functors $(N\otimes_B-,\Hom_A(N,-))$ induces quasi-inverse equivalences between $(\proj{B})^{<\infty}$ and $(\inj{A})^{<\infty}$. Since $A\in (\proj{A})^{<\infty}$ it follows that $\id {_BM}<\infty$ and $A\simeq \End_B(M)$, and similarly since $B\in (\proj{B})^{<\infty}$ we get that $\id {_AN}<\infty$ and $B\simeq \End_A(N)$. 

\item Since selfinjective algebras are Gorenstein, it follows from Example $3.9$ that the converse of Theorem \ref{GorensteinMoritaring} is not true in general.
\end{enumerate}
\end{rem}

If $\Lambda_{(\phi,\psi)}$ is a Morita ring with $A=M=N=B$ then we know from Corollary \ref{equal} that the bimodule homomorphisms $\phi$ and $\psi$ are equal. From now on we denote the Morita ring with all entries a ring $\Lambda$ by $\Delta_{(\phi,\phi)}=\bigl(\begin{smallmatrix}
\Lambda & \Lambda \\
\Lambda & \Lambda
\end{smallmatrix}\bigr)$. It is known from Fossum-Griffith-Reiten \cite{FGR}, see also Happel \cite{Happel}, that if $\Lambda$ is a Gorenstein Artin algebra then the upper triangular matrix algebra $\bigl(\begin{smallmatrix}
\Lambda & \Lambda \\
\Lambda & 0
\end{smallmatrix}\bigr)$ is Gorenstein. In this connection we have the next result, which is a consequence of Theorem \ref{GorensteinMoritaring}, and shows that $\Delta_{(\phi,\phi)}$\index{$\Delta_{(\phi,\phi)}$} is Gorenstein when $\Lambda$ is as well.

\begin{cor}
Let $\Lambda$ be an Artin algebra. Then $\Lambda$ is Gorenstein if and only if the Morita ring $\Delta_{(\phi,\phi)}$ is Gorenstein Artin algebra.
\begin{proof}
Suppose that $\Lambda$ is Gorenstein. Then we have $(\proj{\Lambda})^{<\infty}=(\inj{\Lambda})^{<\infty}$ and so from Theorem \ref{GorensteinMoritaring} it follows that matrix algebra $\Delta_{(\phi,\phi)}$ is Gorenstein. Conversely assume that $\Delta_{(\phi,\phi)}$ is Gorenstein and let $I$ be an injective $\Lambda$-module. Then the injective $\Delta_{(\phi,\phi)}$-module $\mh_{\Lambda}(I)$ has finite projective dimension since $(\proj{\Delta_{(\phi,\phi)}})^{<\infty}=(\inj{\Delta_{(\phi,\phi)}})^{<\infty}$. Consider the exact sequence $\cdots \lxr \mt_{\Lambda}(P_1)\oplus \mt_{\Lambda}(Q_1) \lxr \mt_{\Lambda}(P_0)\oplus \mt_{\Lambda}(Q_0) \lxr \mh_{\Lambda}(I) \lxr 0$
which is the start of a finite projective resolution of $\mh_{\Lambda}(I)$. Then applying the functor $\mU_{\Lambda}\colon \smod{\Delta_{(\phi,\phi)}}\lxr \smod{\Lambda}$ we obtain the exact sequence $\cdots \lxr P_1\oplus Q_1 \lxr P_0\oplus Q_0 \lxr I \lxr 0$ and this implies that $\pd{_{\Lambda}I}<\infty$. Similarly we show that $\id{_{\Lambda}P}<\infty$ for every $P\in \proj{\Lambda}$. We infer that $(\proj{\Lambda})^{<\infty}=(\inj{\Lambda})^{<\infty}$ and therefore the Artin algebra $\Lambda$ is Gorenstein.
\end{proof}
\end{cor}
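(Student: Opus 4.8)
The plan is to read off both implications from the structural results already established, after specializing the Morita context to $A=B=M=N=\Lambda$, so that $M$ and $N$ are the regular $\Lambda$-bimodule. For the ``only if'' direction I would first observe that the functors $M\otimes_A-=\Lambda\otimes_\Lambda-$ and $\Hom_B(M,-)=\Hom_\Lambda(\Lambda,-)$ are naturally isomorphic to the identity functor of $\smod{\Lambda}$, and likewise $N\otimes_B-$ and $\Hom_A(N,-)$; hence the adjoint pair $(M\otimes_A-,\Hom_B(M,-))$ restricts to a pair of quasi-inverse equivalences between $(\proj{\Lambda})^{<\infty}$ and $(\inj{\Lambda})^{<\infty}$ precisely when these two full subcategories of $\smod{\Lambda}$ coincide, that is, precisely when $\Lambda$ is Gorenstein. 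So, assuming $\Lambda$ Gorenstein, the hypotheses of \thmref{GorensteinMoritaring} hold for $\Delta_{(\phi,\phi)}$ (which is an Artin algebra since $\Lambda$ is, with $M=N=\Lambda$ finitely generated over the base ring), and therefore $\Delta_{(\phi,\phi)}$ is Gorenstein. This may be viewed as the Morita-ring analogue of the Fossum--Griffith--Reiten and Happel fact for upper triangular matrix algebras recalled above.

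For the ``if'' direction, assume $\Delta_{(\phi,\phi)}$ is Gorenstein, so that $(\proj{\Delta_{(\phi,\phi)}})^{<\infty}=(\inj{\Delta_{(\phi,\phi)}})^{<\infty}$; the goal is to conclude $(\proj{\Lambda})^{<\infty}=(\inj{\Lambda})^{<\infty}$. Given an injective $\Lambda$-module $I$, the $\Delta_{(\phi,\phi)}$-module $\mh_{\Lambda}(I)$ is injective by \propref{inj}, hence of finite projective dimension; by \propref{proj} any finite projective $\Delta_{(\phi,\phi)}$-resolution of it has the shape $\cdots\lxr \mt_{\Lambda}(P_1)\oplus\mt_{\Lambda}(Q_1)\lxr \mt_{\Lambda}(P_0)\oplus\mt_{\Lambda}(Q_0)\lxr \mh_{\Lambda}(I)\lxr 0$ with $P_i,Q_i$ projective $\Lambda$-modules. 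Applying the exact functor $\mU_{\Lambda}$ of \propref{recollement} and using $\mU_{\Lambda}\mt_{\Lambda}=\iden_{\smod{\Lambda}}$ and $\mU_{\Lambda}\mh_{\Lambda}=\iden_{\smod{\Lambda}}$ produces a finite projective $\Lambda$-resolution of $I$, whence $\pd_{\Lambda}{I}<\infty$. Dually, for $P\in\proj{\Lambda}$ the module $\mt_{\Lambda}(P)$ is projective over $\Delta_{(\phi,\phi)}$, hence has a finite injective $\Delta_{(\phi,\phi)}$-coresolution assembled from the injectives $\mh_{\Lambda}(I),\mh_{\Lambda}(J)$ of \propref{inj}; applying $\mU_{\Lambda}$ again gives a finite injective $\Lambda$-coresolution of $P$, so $\id_{\Lambda}{P}<\infty$. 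Thus $(\proj{\Lambda})^{<\infty}=(\inj{\Lambda})^{<\infty}$ and $\Lambda$ is Gorenstein.

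I do not expect a serious obstacle here: the argument is essentially bookkeeping built on the descriptions of projective and injective $\Delta_{(\phi,\phi)}$-modules (\propref{proj}, \propref{inj}) and the exactness of $\mU_{\Lambda}$. The only points demanding care are, first, confirming in the ``only if'' direction that with the connecting functors equal to the identity the ``quasi-inverse equivalence'' hypothesis of \thmref{GorensteinMoritaring} is genuinely equivalent to the equality $(\proj{\Lambda})^{<\infty}=(\inj{\Lambda})^{<\infty}$, not merely implied by it; and second, checking in the ``if'' direction that $\mU_{\Lambda}$ really carries a finite projective (resp.\ injective) $\Delta_{(\phi,\phi)}$-resolution to a finite projective (resp.\ injective) $\Lambda$-resolution, which is immediate from the exactness of $\mU_{\Lambda}$ together with the identities $\mU_{\Lambda}\mt_{\Lambda}=\iden_{\smod{\Lambda}}$ and $\mU_{\Lambda}\mh_{\Lambda}=\iden_{\smod{\Lambda}}$.
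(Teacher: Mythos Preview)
Your proposal is correct and follows essentially the same route as the paper: the forward direction invokes \thmref{GorensteinMoritaring} after noting that with $M=N=\Lambda$ the relevant adjoint pairs reduce to the identity, and the converse transfers finite projective (resp.\ injective) resolutions from $\Delta_{(\phi,\phi)}$ to $\Lambda$ by applying the exact functor $\mU_{\Lambda}$, using that $\mU_{\Lambda}$ sends the projectives $\mt_{\Lambda}(P)$, $\mt'_{\Lambda}(Q)$ and injectives $\mh_{\Lambda}(I)$, $\mh'_{\Lambda}(J)$ to $\Lambda$-projectives and $\Lambda$-injectives respectively. The only cosmetic point is that the indecomposable projectives over $\Delta_{(\phi,\phi)}$ are of the two types $\mt_{\Lambda}(P)$ and $\mt'_{\Lambda}(Q)$ (cf.\ \propref{proj}), so the resolution should strictly read $\mt_{\Lambda}(P_i)\oplus \mt'_{\Lambda}(Q_i)$; since $\mU_{\Lambda}\mt'_{\Lambda}(Q)=\Lambda\otimes_{\Lambda}Q\cong Q$ this does not affect the argument.
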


Recall that an acyclic complex of projective $\Lambda$-modules $\textsf{P}^{\bullet}\colon \cdots \lxr P^{i-1}\lxr P^i\lxr P^{i+1}\lxr \cdots$ is called \textsf{totally acyclic}, if the complex $\Hom_{\Lambda}(\textsf{P}^{\bullet},\Lambda)$ is acyclic. Then a $\Lambda$-module $X$ is called \textsf{Gorenstein-projective} if it is of the form $X=\Coker{(P^{-1}\lxr P^0)}$ for some totally acyclic complex $\textsf{P}^{\bullet}$ of projective $\Lambda$-modules. For an Artin algebra $\Lambda$ we denote by $\Gproj{\Lambda}$ the full subcategory of $\smod{\Lambda}$ consisting of the finitely generated Gorenstein-projective $\Lambda$-modules. It is well known, see \cite[Proposition $3.10$]{Bel: virtually}, that when $\Lambda$ is Gorenstein then $\Gproj{\Lambda}=\{X \ | \ \Ext_{\Lambda}^n(X,\Lambda)=0, \ \forall n\geq 1 \}$. Finally recall from \cite{Bel: virtually}, \cite{Bel: finite CM type} that an Artin algebra $\Lambda$ is said to be of \textsf{finite Cohen-Macaulay type} if the category $\Gproj{\Lambda}$ of finitely generated Gorenstein-projective $\Lambda$-modules is of finite representation type, i.e. the set of isomorphism classes of its indecomposable objects is finite.

Recently Li and Zhang \cite{Zhang} determined the Gorenstein-projective modules over the triangular matrix algebra $\bigl(\begin{smallmatrix}
\Lambda & \Lambda \\
0 & \Lambda
\end{smallmatrix}\bigr)$, when $\Lambda$ is a Gorenstein Artin algebra, and using this they obtained a criterion for the Cohen-Macaulay finiteness of $\bigl(\begin{smallmatrix}
\Lambda & \Lambda \\
0 & \Lambda
\end{smallmatrix}\bigr)$ in case that $\Lambda$ is a Gorenstein Artin algebra of finite Cohen-Macaulay type. Our aim now is to describe the Gorenstein-projective modules over the algebra $\Delta_{(\phi,\phi)}$. For the ring $\Delta_{(\phi,\phi)}$ we denote by $\mt_{\Lambda}^{\prime}$, resp. $\mh_{\Lambda}^{\prime}$, the functor $\mt_B$, resp. $\mh_{B}$, where the algebra $B$ is now $\Lambda$. 

We need the following observation.

\begin{lem}
Let $\Delta_{(\phi,\phi)}$ be a Morita ring. Then we have isomorphisms of functors$\colon$ $\mt_{\Lambda}(-)\simeq \mh^{\prime}_{\Lambda}(-)$ and $\mt_{\Lambda}^{\prime}(-)\simeq \mh_{\Lambda}(-)$.
\begin{proof}
Let $X$ be a $\Lambda$-module and $f\colon \Hom_{\Lambda}(\Lambda,X)\stackrel{\simeq}{\lxr}X$, $g\colon \Lambda\otimes_{\Lambda}X\stackrel{\simeq}{\lxr} X$ the standard isomorphisms. Then it is easy to check that the maps $(f^{-1},g)\colon \mt_{\Lambda}(X)=(X,\Lambda\otimes_{\Lambda}X,\iden_{\Lambda\otimes X},\Phi_X)\lxr \mh^{\prime}_{\Lambda}(X)=(\Hom_{\Lambda}(\Lambda,X),X,\epsilon_X,\delta_{\Lambda\otimes X}\circ \Hom_{\Lambda}(\Lambda,\Phi_X))$ and $(g, f^{-1})\colon \mt_{\Lambda}^{\prime}(X)=(\Lambda\otimes_{\Lambda}X,X,\Phi_X,\iden_{\Lambda\otimes X})\lxr \mh_{\Lambda}(X)=(X,\Hom_{\Lambda}(\Lambda,X),\delta'_{\Lambda\otimes X}\circ \Hom_{\Lambda}(\Lambda,\Phi_X),\epsilon'_X)$ are isomorphisms of $\Delta_{(\phi,\phi)}$-modules. 
\end{proof}
\end{lem}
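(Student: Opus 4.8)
The plan is to unwind the definitions of the four functors $\mt_\Lambda$, $\mt'_\Lambda$, $\mh_\Lambda$, $\mh'_\Lambda$ in the special case $A=M=N=B=\Lambda$, and to write down explicit natural transformations and check they are isomorphisms of $\Delta_{(\phi,\phi)}$-modules, i.e.\ morphisms in $\M(\Delta)$. First I would fix, for a $\Lambda$-module $X$, the standard isomorphisms $g\colon \Lambda\otimes_\Lambda X\stackrel{\simeq}{\lxr}X$ and $f\colon \Hom_\Lambda(\Lambda,X)\stackrel{\simeq}{\lxr}X$ (evaluation at $1_\Lambda$), both of which are natural in $X$. Expanding the definitions from Section 2, $\mt_\Lambda(X)=(X,\Lambda\otimes_\Lambda X,\iden_{\Lambda\otimes X},\Psi_X)$ and $\mh'_\Lambda(X)=(\Hom_\Lambda(\Lambda,X),X,\epsilon_X,\delta_{\Lambda\otimes X}\circ\Hom_\Lambda(\Lambda,\Phi_X))$, and similarly for the other pair; note that in the case $M=N=\Lambda$ with $\phi=\psi$ the maps $\Psi_X$ and $\Phi_X$ are both induced by $\phi$ via the canonical identification $\Lambda\otimes_\Lambda\Lambda\otimes_\Lambda X\simeq \Lambda\otimes_\Lambda X$, so the two tuples have genuinely "matching" structure maps once one transports along $f$ and $g$.

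The key step is to verify that the pair $(f^{-1},g)\colon \mt_\Lambda(X)\lxr \mh'_\Lambda(X)$ is a morphism in $\M(\Delta)$, which by the definition of morphisms in $\M(\Delta)$ (recalled just before Proposition~\ref{tuples}) amounts to the commutativity of two square diagrams: one expressing compatibility of $f^{-1}$ and $g$ with the ``$f$-components'' ($\iden_{\Lambda\otimes X}$ on one side, $\epsilon_X$ on the other) and one with the ``$g$-components'' ($\Psi_X$ versus $\delta_{\Lambda\otimes X}\circ\Hom_\Lambda(\Lambda,\Phi_X)$). The first square is essentially the statement that the counit $\epsilon$ of the adjunction $(\Lambda\otimes_\Lambda-,\Hom_\Lambda(\Lambda,-))$ is, under the identifications $g$ and $f$, the identity; the second square unwinds to the triangle identities for the unit $\delta$ together with naturality of $\delta$, and the fact that $\Phi_X$ corresponds to $\Psi_X$ under these identifications. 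Since $f^{-1}$ and $g$ are isomorphisms of the underlying $\Lambda$-modules, $(f^{-1},g)$ is then automatically an isomorphism in $\M(\Delta)$. The analogous check for $(g,f^{-1})\colon \mt'_\Lambda(X)\lxr \mh_\Lambda(X)$ is the mirror image, interchanging the roles of the two coordinates and of $(\epsilon,\delta)$ with $(\epsilon',\delta')$. Naturality in $X$ of both isomorphisms follows from naturality of $f$ and $g$.

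The main obstacle I expect is purely bookkeeping: correctly identifying all the structure maps $\Psi_X$, $\Phi_X$, $\epsilon$, $\epsilon'$, $\delta$, $\delta'$ after the various canonical identifications $\Lambda\otimes_\Lambda(-)\simeq\iden$ and $\Hom_\Lambda(\Lambda,-)\simeq\iden$, and making sure the diagrammatic composition convention does not introduce a sign or an order reversal that breaks one of the squares. There is no deep content here — once the definitions are laid out side by side the commutativities are forced — so the proof is essentially a careful diagram chase, which is why the paper states it as a one-line ``easy to check''.
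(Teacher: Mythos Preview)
Your proposal is correct and follows exactly the same approach as the paper: you introduce the same standard isomorphisms $f$ and $g$, write down the same explicit pairs $(f^{-1},g)$ and $(g,f^{-1})$, and verify they are morphisms in $\M(\Delta)$ by checking the two commutativity squares. Your write-up is in fact more detailed than the paper's, which simply records the two pairs and asserts the verification is ``easy to check''; your observation that $\Psi_X$ and $\Phi_X$ coincide when $\phi=\psi$ explains the paper's notational shortcut of writing $\Phi_X$ in the description of $\mt_\Lambda(X)$.
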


The following result characterizes when a module over the algebra $\Delta_{(\phi,\phi)}$ is Gorenstein-projective.

\begin{cor}
Let $\Lambda$ be a Gorenstein Artin algebra. Then a $\Delta_{(\phi,\phi)}$-module $(X,Y,f,g)$ is Gorenstein-projective if and only if $X$ and $Y$ are Gorenstein-projective $\Lambda$-modules.
\begin{proof}
Let $0\lxr \Lambda\lxr I_{0}\lxr I_1\lxr \cdots \lxr I_n\lxr 0$ be an injective coresolution of $_{\Lambda}{\Lambda}$. If we apply the functors $\mh_{\Lambda}, \mh^{\prime}_{\Lambda}\colon \smod{\Lambda}\lxr \smod{\Delta_{(\phi,\phi)}}$ we obtain the exact sequences $0\lxr \mh_{\Lambda}(\Lambda)\lxr \mh_{\Lambda}(I_{0})\lxr \cdots \lxr \mh_{\Lambda}(I_n)\lxr 0$ and $0\lxr \mh^{\prime}_{\Lambda}(\Lambda)\lxr \mh^{\prime}_{\Lambda}(I_{0})\lxr \cdots \lxr \mh^{\prime}_{\Lambda}(I_n)\lxr 0$ which are injective coresolutions of $\mh_{\Lambda}(\Lambda)$ and $\mh^{\prime}_{\Lambda}(\Lambda)$ respectively. From Lemma $6.5$ the above resolutions can be regarded as injective coresolutions of $\mt^{\prime}_{\Lambda}(\Lambda)$ and $\mt_{\Lambda}(\Lambda)$. 
Then using the adjoint pairs of functors $(\mU^{\prime}_{\Lambda},\mh_{\Lambda}^{\prime})$ and  $(\mU_{\Lambda},\mh_{\Lambda})$ we have the following commutative diagrams$\colon$
\[  
\xymatrix{
  0 \ar[r] & \big((X,Y,f,g),\mt_{\Lambda}(\Lambda)\big) \ar[d]_{\simeq}
  \ar[r]^{} & \big((X,Y,f,g),\mh_{\Lambda}^{\prime}(I_0)\big) \ar[d]_{\simeq} \ar[r]^{} & \cdots \ar[r]^{} & \big((X,Y,f,g),\mh_{\Lambda}^{\prime}(I_n)\big) \ar[d]_{\simeq} \ar[r] & 
  0 \\
 0 \ar[r] & \Hom_{\Lambda}(Y,\Lambda) \ar[r]^{} & \Hom_{\Lambda}(Y,I_0) \ar[r]^{} & \cdots \ar[r]^{} & \Hom_{\Lambda}(Y,I_n)
  \ar[r]^{} & 0   } 
\]
and
\[  
\xymatrix{
  0 \ar[r] & \big((X,Y,f,g),\mt_{\Lambda}^{\prime}(\Lambda)\big) \ar[d]_{\simeq}
  \ar[r]^{} & \big((X,Y,f,g),\mh_{\Lambda}(I_0)\big) \ar[d]_{\simeq} \ar[r]^{} & \cdots \ar[r]^{} & \big((X,Y,f,g),\mh_{\Lambda}(I_n)\big) \ar[d]_{\simeq} \ar[r] & 
  0 \\
 0 \ar[r] & \Hom_{\Lambda}(X,\Lambda) \ar[r]^{} & \Hom_{\Lambda}(X,I_0) \ar[r]^{} & \cdots \ar[r]^{} & \Hom_{\Lambda}(X,I_n)
  \ar[r]^{} & 0   } 
\]
These diagrams imply that $\Ext^{n}_{\Lambda}(Y,\Lambda)=0$, $\forall n\geq 1$, if and only if $\Ext^n_{\Delta_{(\phi,\phi)}}((X,Y,f,g),\mt_{\Lambda}(\Lambda))=0$, $\forall n\geq 1$, and $\Ext^{n}_{\Lambda}(X,\Lambda)=0$, $\forall n\geq 1$, if and only if $\Ext^n_{\Delta_{(\phi,\phi)}}((X,Y,f,g),\mt_{\Lambda}^{\prime}(\Lambda))=0$, $\forall n\geq 1$. Since $\Delta_{(\phi,\phi)}\simeq \mt_{\Lambda}(\Lambda)\oplus \mt_{\Lambda}^{\prime}(\Lambda)$ as $\Delta_{(\phi,\phi)}$-modules it follows from Corollary $6.4$ that $(X,Y,f,g)\in \Gproj{\Delta_{(\phi,\phi)}}$ if and only if $X, Y\in \Gproj{\Lambda}$.
\end{proof}
\end{cor}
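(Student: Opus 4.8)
The statement to prove is Corollary 6.6: for $\Lambda$ a Gorenstein Artin algebra, a $\Delta_{(\phi,\phi)}$-module $(X,Y,f,g)$ is Gorenstein-projective if and only if $X$ and $Y$ are Gorenstein-projective $\Lambda$-modules. The plan is to use the criterion available in the Gorenstein setting, namely that (by \cite[Proposition~3.10]{Bel: virtually}, quoted in the text) $\Gproj{\Delta_{(\phi,\phi)}} = \{Z \mid \Ext^n_{\Delta_{(\phi,\phi)}}(Z,\Delta_{(\phi,\phi)}) = 0,\ \forall n\ge 1\}$, which is applicable precisely because Corollary~6.4 tells us $\Delta_{(\phi,\phi)}$ is itself Gorenstein. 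The same criterion holds for $\Lambda$. So the entire problem reduces to translating the vanishing of $\Ext$ against $\Delta_{(\phi,\phi)}$ into the vanishing of $\Ext$ against $\Lambda$ for each of $X$ and $Y$ separately.

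The key structural input is the decomposition $\Delta_{(\phi,\phi)} \simeq \mt_{\Lambda}(\Lambda) \oplus \mt^{\prime}_{\Lambda}(\Lambda)$ as a $\Delta_{(\phi,\phi)}$-module, which comes from Proposition~\ref{proj} applied to the regular module. Hence $\Ext^n_{\Delta_{(\phi,\phi)}}((X,Y,f,g),\Delta_{(\phi,\phi)}) = 0$ for all $n\ge 1$ if and only if both $\Ext^n_{\Delta_{(\phi,\phi)}}((X,Y,f,g),\mt_{\Lambda}(\Lambda)) = 0$ and $\Ext^n_{\Delta_{(\phi,\phi)}}((X,Y,f,g),\mt^{\prime}_{\Lambda}(\Lambda)) = 0$ for all $n\ge 1$. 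The next step is to build injective coresolutions: starting from an injective coresolution $0\to {_\Lambda\Lambda}\to I_0\to\cdots\to I_n\to 0$ of ${_\Lambda\Lambda}$ (finite because $\Lambda$ is Gorenstein), apply the exact functors $\mh_{\Lambda}$ and $\mh^{\prime}_{\Lambda}$, which send injectives to injectives by Proposition~\ref{inj}, to get injective coresolutions of $\mh_{\Lambda}(\Lambda)$ and $\mh^{\prime}_{\Lambda}(\Lambda)$. By Lemma~6.5 (the identifications $\mt_{\Lambda}(-)\simeq\mh^{\prime}_{\Lambda}(-)$ and $\mt^{\prime}_{\Lambda}(-)\simeq\mh_{\Lambda}(-)$ for $\Delta_{(\phi,\phi)}$) these are also injective coresolutions of $\mt^{\prime}_{\Lambda}(\Lambda)$ and $\mt_{\Lambda}(\Lambda)$ respectively. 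Then I would compute $\Ext^n_{\Delta_{(\phi,\phi)}}((X,Y,f,g),-)$ against these coresolutions using the adjunctions $(\mU_{\Lambda},\mh_{\Lambda})$ and $(\mU^{\prime}_{\Lambda},\mh^{\prime}_{\Lambda})$ from Proposition~\ref{recollement}(ii): the adjoint isomorphisms turn the Hom-complex $\Hom_{\Delta_{(\phi,\phi)}}((X,Y,f,g),\mh^{\prime}_{\Lambda}(I_\bullet))$ into $\Hom_{\Lambda}(Y,I_\bullet)$ and $\Hom_{\Delta_{(\phi,\phi)}}((X,Y,f,g),\mh_{\Lambda}(I_\bullet))$ into $\Hom_{\Lambda}(X,I_\bullet)$, and these are natural in the resolution so they are chain isomorphisms. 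Taking cohomology yields $\Ext^n_{\Delta_{(\phi,\phi)}}((X,Y,f,g),\mt_{\Lambda}(\Lambda)) \simeq \Ext^n_{\Lambda}(Y,\Lambda)$ and $\Ext^n_{\Delta_{(\phi,\phi)}}((X,Y,f,g),\mt^{\prime}_{\Lambda}(\Lambda)) \simeq \Ext^n_{\Lambda}(X,\Lambda)$ for all $n\ge 1$.

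Combining: $(X,Y,f,g)\in\Gproj{\Delta_{(\phi,\phi)}}$ iff $\Ext^n_{\Delta_{(\phi,\phi)}}((X,Y,f,g),\Delta_{(\phi,\phi)})=0$ for all $n\ge 1$, iff both $\Ext^n_{\Lambda}(X,\Lambda)=0$ and $\Ext^n_{\Lambda}(Y,\Lambda)=0$ for all $n\ge 1$, iff $X,Y\in\Gproj{\Lambda}$ — where the first and last equivalences use the Gorenstein-$\Ext$-criterion for $\Delta_{(\phi,\phi)}$ and for $\Lambda$ respectively. I expect the main obstacle to be purely bookkeeping: verifying carefully that the adjunction isomorphisms are compatible with the differentials (so that one really gets an isomorphism of complexes, not just of terms) and that the functors $\mh_{\Lambda},\mh^{\prime}_{\Lambda}$ are exact and preserve injectives in the precise form needed — but all of this is either immediate from Proposition~\ref{recollement} (the $\mU$'s are exact, $\mh$'s are exact as right adjoints of exact functors) and Proposition~\ref{inj}, or is the content of Lemma~6.5; there is no genuinely hard point once Corollary~6.4 supplies the Gorensteinness of $\Delta_{(\phi,\phi)}$ that legitimizes the $\Ext$-vanishing criterion.
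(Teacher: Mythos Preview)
Your proposal is correct and follows essentially the same approach as the paper: use Corollary~6.4 to invoke the $\Ext$-vanishing criterion for Gorenstein-projectives, decompose $\Delta_{(\phi,\phi)}\simeq\mt_{\Lambda}(\Lambda)\oplus\mt'_{\Lambda}(\Lambda)$, push a finite injective coresolution of $_\Lambda\Lambda$ through $\mh_\Lambda$ and $\mh'_\Lambda$, identify via Lemma~6.5, and compute $\Ext$ via the adjunctions $(\mU_\Lambda,\mh_\Lambda)$, $(\mU'_\Lambda,\mh'_\Lambda)$. One small caution: your remark that ``$\mh$'s are exact as right adjoints of exact functors'' is not a valid general principle (right adjoints are only left exact); the exactness here holds because for $\Delta_{(\phi,\phi)}$ the functor $\Hom_\Lambda(\Lambda,-)$ is the identity, so $\mh_\Lambda$ and $\mh'_\Lambda$ are visibly exact.
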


After the above description it would be interesting to examine when the matrix algebra $\Delta_{(\phi,\phi)}$ is of finite Cohen-Macaulay type \cite{Bel: finite CM type}.

We close this section with the following result which gives the connection between the category of Gorenstein-projective modules over $\Delta_{(\phi,\phi)}$ and the corresponding category of $\Lambda$.

\begin{cor}
Let $\Lambda$ be a Gorenstein Artin algebra. Then the recollement situation of $\smod{\Delta_{(\phi,\phi)}}$ is restricted to the categories of Gorenstein-projective modules $\Gproj{\Delta_{(\phi,\phi)}}$ and $\Gproj{\Lambda}$.
\begin{proof}
Let $X$ be a Gorenstein-projective $\Lambda$-module. Since $\Lambda$ is Gorenstein there exists an exact sequence $0\lxr X\lxr P^0\lxr P^1\lxr \cdots$ where each $P^i$ is a projective $\Lambda$-module. If we apply the exact functors $\mt_{\Lambda}$ and $\mt^{\prime}_{\Lambda}$ we get that $\mt_{\Lambda}(X)$ and $\mt_{\Lambda}^{\prime}(X)$ are Gorenstein-projective $\Delta_{(\phi,\phi)}$-modules, since from Corollary $6.4$ the Artin algebra $\Delta_{(\phi,\phi)}$ is Gorenstein. Also from Corollary $6.6$ it follows that $\mU_{\Lambda}(X,Y,f,g)$ is Gorenstein-projective for every $(X,Y,f,g)\in \Gproj{\Delta_{(\phi,\phi)}}$ and finally we have $\Ker{\mU_{\Lambda}}=\{(0,Y,0,0)\in  \Gproj{\Delta_{(\phi,\phi)}} \ | \ Y\in \Gproj{\Lambda} \}$.  
\end{proof}
\end{cor}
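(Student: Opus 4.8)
The plan is to verify that each functor occurring in the recollement of $\smod{\Delta_{(\phi,\phi)}}$ — the inclusion of $\Ker{\mU_{\Lambda}}$, the functor $\mU_{\Lambda}$, and its left and right adjoints $\mt_{\Lambda}$ and $\mh_{\Lambda}$ (together with $\mt^{\prime}_{\Lambda}$, $\mh^{\prime}_{\Lambda}$) — carries the appropriate category of Gorenstein-projectives into the appropriate one, and that the structural data of a recollement (adjunction units and counits, the kernel description) survive the restriction. Since the subcategories $\Gproj{\Delta_{(\phi,\phi)}}\subseteq\smod{\Delta_{(\phi,\phi)}}$ and $\Gproj{\Lambda}\subseteq\smod{\Lambda}$ are full, once the four relevant functors are shown to restrict, the adjunction isomorphisms restrict automatically; so the genuine content is the four preservation statements. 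I would organise them around Corollary $6.4$ (so that $\Delta_{(\phi,\phi)}$ is itself Gorenstein, and hence $\Gproj{\Delta_{(\phi,\phi)}}=\{Z\mid \Ext^{n}_{\Delta_{(\phi,\phi)}}(Z,\Delta_{(\phi,\phi)})=0,\ \forall n\geq 1\}$), Corollary $6.6$ (the componentwise description of Gorenstein-projective $\Delta_{(\phi,\phi)}$-modules), and Lemma $6.5$ ($\mt_{\Lambda}\simeq \mh^{\prime}_{\Lambda}$ and $\mt^{\prime}_{\Lambda}\simeq \mh_{\Lambda}$).

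First I would treat $\mU_{\Lambda}$ and the kernel. If $(X,Y,f,g)\in\Gproj{\Delta_{(\phi,\phi)}}$, then Corollary $6.6$ gives at once that $X=\mU_{\Lambda}(X,Y,f,g)\in\Gproj{\Lambda}$, so $\mU_{\Lambda}$ restricts; since $\mU_{\Lambda}$ is exact this restriction is again exact. For the kernel, every object of $\Ker{\mU_{\Lambda}}$ has the form $(0,Y,0,0)$, and by Corollary $6.6$ such an object is Gorenstein-projective over $\Delta_{(\phi,\phi)}$ exactly when both underlying modules $0$ and $Y$ are Gorenstein-projective over $\Lambda$, i.e. exactly when $Y\in\Gproj{\Lambda}$; this is the identification $\Ker{\mU_{\Lambda}}\cap\Gproj{\Delta_{(\phi,\phi)}}=\{(0,Y,0,0)\mid Y\in\Gproj{\Lambda}\}$.

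Next I would handle the adjoints. Given $X\in\Gproj{\Lambda}$, the quickest route is to observe that $\mt_{\Lambda}(X)=(X,\Lambda\otimes_{\Lambda}X,\iden_{\Lambda\otimes X},\Phi_{X})$ has both underlying $\Lambda$-modules isomorphic to $X$, hence lies in $\Gproj{\Delta_{(\phi,\phi)}}$ by Corollary $6.6$; and likewise $\mt^{\prime}_{\Lambda}(X)\in\Gproj{\Delta_{(\phi,\phi)}}$. Alternatively, and this is the argument I expect the paper to favour, one uses that $\mt_{\Lambda}$ is exact (because $\Lambda\otimes_{\Lambda}-$ is exact) and sends projective $\Lambda$-modules to projective $\Delta_{(\phi,\phi)}$-modules (Proposition \ref{proj}): splicing a projective resolution of $X$ with an exact coresolution $0\lxr X\lxr P^{0}\lxr P^{1}\lxr\cdots$ by projectives (available since $\Lambda$ is Gorenstein) yields a complete resolution of $X$, and applying $\mt_{\Lambda}$ produces an acyclic complex of projective $\Delta_{(\phi,\phi)}$-modules with cokernel $\mt_{\Lambda}(X)$; as $\Delta_{(\phi,\phi)}$ is Gorenstein by Corollary $6.4$, that acyclic complex is automatically totally acyclic, so $\mt_{\Lambda}(X)\in\Gproj{\Delta_{(\phi,\phi)}}$. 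The statement for $\mh_{\Lambda}$ then follows either directly from Corollary $6.6$ or by invoking Lemma $6.5$ to replace $\mh_{\Lambda}$ by $\mt^{\prime}_{\Lambda}$ and rerunning the previous step. Finally one records that the defining data of the recollement — the adjoint pairs $(\mt_{\Lambda},\mU_{\Lambda})$, $(\mU_{\Lambda},\mh_{\Lambda})$ and the equivalence of $\Ker{\mU_{\Lambda}}$ with (the Gorenstein-projective part of) $\smod{\Lambda}$ — is inherited by the restricted functors.

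The step I expect to be the main obstacle is precisely confirming that $\mt_{\Lambda}$ and $\mh_{\Lambda}$ land in $\Gproj{\Delta_{(\phi,\phi)}}$ rather than merely in the class of modules admitting acyclic, but not obviously totally acyclic, resolutions by projectives: this is where the hypothesis that $\Lambda$ (hence $\Delta_{(\phi,\phi)}$, by Corollary $6.4$) is Gorenstein is indispensable, and where one must use both the exactness of $\mt_{\Lambda}$ and its preservation of projectives to transport a complete resolution. Once Corollaries $6.4$ and $6.6$ and Lemma $6.5$ are in hand, everything else — the restriction of $\mU_{\Lambda}$, the computation of the kernel, and the inheritance of the adjunctions — is formal.
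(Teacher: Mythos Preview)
Your proposal is correct and follows essentially the same approach as the paper: the paper handles $\mt_{\Lambda}$ and $\mt^{\prime}_{\Lambda}$ exactly via your ``alternative'' route (apply the exact functors to a projective coresolution of $X$ and invoke Corollary~6.4), then uses Corollary~6.6 for $\mU_{\Lambda}$ and the kernel, precisely as you do. Your additional observation that one could bypass the coresolution argument entirely by reading off the underlying $\Lambda$-modules of $\mt_{\Lambda}(X)$ and applying Corollary~6.6 directly is a valid shortcut the paper does not mention.
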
 

\begin{ackn}
This work started when both authors were visiting the University of Bielefeld in $2010$. The authors would like to express their special thanks to Henning Krause for the warm hospitality and the excellent working conditions. This work has been completed during a stay of the second author at the Norwegian University of Science and Technology (NTNU, Trondheim) in $2012$. The second author would like to thank ${\O}$yvind Solberg for the invitation and the warm hospitality. Finally the authors are grateful to Apostolos Beligiannis, Magdalini Lada and Jorge Vit\'oria for many helpful discussions and useful comments concerning the material of the paper.
\end{ackn}



\end{document}